\newtheorem{theorem}{Theorem}[section]
\newtheorem{proposition}[theorem]{Proposition}
\newtheorem{corollary}[theorem]{Corollary}
\newtheorem{lemma}[theorem]{Lemma}
\theoremstyle{definition}
\newtheorem{remark}[theorem]{Remark}
\newcommand{\C}{\mathbb{C}}
\newcommand{\bbP}{\mathbb{P}}
\newcommand{\OO}{\mathcal{O}}
\newcommand{\Fiso}{\mathcal{F}_{\textrm{iso}}}
\newcommand{\rh}{\mathrm{Mon}}
\newcommand{\systems}{\mathfrak{sl}_2(S)}
\newcommand{\SL}{\mathrm{SL}_2}
\newcommand{\sli}{\mathfrak{sl}_2}
\newcommand{\Nu}{\mathcal V}
\newcommand{\Rep}{\Xi}
\newcommand{\SYS}{\mathrm{Syst}}
\newcommand{\PSYS}{\mathrm{Syst}^{\mathbb P^1}}
\newcommand{\CON}{\mathrm{Con}}
\newcommand{\PCON}{\mathrm{Con}^{\mathbb P^1}}
\newcommand{\CONSSS}{\Upsilon}
\newcommand{\PCONSSS}{\Upsilon^{\mathbb P^1}}
\newcommand{\Teich}{\mathrm{Teich}(S)}
\newcommand{\Mod}{\mathrm{Mod}(S)}
\newcommand{\curvename}{X}
\newcommand{\curvenametop}{S}
\newcommand{\fundgroup}{\pi_1}
\newcommand{\diff}{\mathrm{d}}
\newcommand{\oneforms}{\Omega^1}
\newcommand{\hypcover}{h}
\newcommand{\Darbouxcover}{\Psi}
\date{\today}
\title[The Riemann-Hilbert mapping for systems]{The Riemann-Hilbert mapping for $\mathfrak{sl}_2$-systems over genus two curves}
\author[G. Calsamiglia]{Gabriel Calsamiglia}
\address{Instituto de Matem\'atica e Estat\' \i stica, Universidade Federal Fluminense,
Rua Professor Marcos Waldemar de Freitas Reis, s/n, Bloco H - Campus do Gragoat\' a
S\~ ao Domingos- Niter\' oi - RJ - CEP: 24.210-201}
\email{gabriel@mat.uff.br}
\author{Bertrand Deroin}
\address{Laboratoire AGM - CNRS / Université Cergy-Pontoise - 2 av. Adolphe Chauvin - 95302 Cergy-Pontoise, France}
\email{bertrand.deroin@u-cergy.fr}
\author[V. Heu]{Viktoria Heu}
\address{IRMA, 7 rue Ren\'e-Descartes, 67084 Strasbourg Cedex, France}
\email{heu@math.unistra.fr}
\author[F. Loray]{Frank Loray}
\address{Univ Rennes, CNRS, IRMAR - UMR 6625, F-35000 Rennes, France}
\email{frank.loray@univ-rennes1.fr}
\dedicatory{\`a \'Etienne Ghys}
\thanks{}
\keywords{$\mathfrak{sl}_2$-systems over curves, monodromy, Riemann-Hilbert, projective structures, holomorphic connections, foliations;  syst\` emes $\mathfrak{sl}_2$ sur les courbes, monodromie, Riemann-Hilbert, structures projectives, connexions holomorphes, feuilletages}
\subjclass[2010]{34Mxx, 14Q10, 32G34, 53A30, 14H15}
\begin{document}
\begin{abstract}
We prove in two different ways that the monodromy map from the space of irreducible
$\sli$-differential-systems on genus two Riemann surfaces, towards the character variety of
$\SL$-representations of the fundamental group, is a local diffeomorphism.
We also show that this is no longer true in the higher genus case.
Our work is motivated by a question raised by \'Etienne Ghys about Margulis' problem: the existence
of curves of negative Euler characteristic  in compact quotients of $\SL(\C)$.
\newline
Nous montrons de deux mani\`eres diff\'erentes que l'application monodromie, depuis l'espace
des $\sli$-syst\`emes diff\'erentiels irr\'eductibles sur les surfaces de Riemann de genre deux,
vers la vari\'et\'e de caract\`eres des $\SL$-repr\'esentations du groupe fondamental,
est un diff\'eomorphisme local. Nous montrons aussi que ce n'est plus le cas en
genre sup\'erieur. Notre travail est motiv\'e par une question d'\'Etienne Ghys \`a propos
d'un probl\`eme de Margulis : l'existence de courbes de caract\'eristique d'Euler n\'egative
dans les quotients compacts de $\SL(\C)$.

\end{abstract}
  \thanks{The authors would like to warmly thank the referee for the careful reading and helpful suggestions on the manuscript.
This work was supported by ANR-13-BS01-0001-01,
ANR-13-JS01-0002-01, ANR-16-CE40-0008, Math-Amsud Program, Capes 88887.159532/2017-00, CNPq  and the France-Brazil agreement in Mathematics}
\maketitle
\section{Introduction}

Let $S$ be a compact oriented topological surface of genus $g\geq 2$ and $\curvename\in\Teich$ a complex structure on $S$, \emph{i.e.} $X$ is a smooth projective curve endowed with the isotopy class of a diffeomorphism $X\to S$.
Given a $\sli$-matrix of holomorphic one forms on $\curvename $:
$$A=\begin{pmatrix}\alpha&\beta\\ \gamma&-\alpha\end{pmatrix}\in\sli(\oneforms(\curvename ))$$
we consider the system of differential equations
for $Y\in\C^2$:
\begin{equation} \label{eq:linear system} \diff Y+A Y=0.\end{equation}
A fundamental matrix $B(x)$ at a point $x_0\in \curvename $ is a two-by-two matrix $B\in\SL(\mathcal O_{x_0})$ whose columns form
a base for the two-dimensional vector space of solutions, i.e. satisfying  $\diff B+AB=0$, $\det(B)\equiv 1$.
It can be analytically continued as a function $B: \widetilde{X} \rightarrow \text{SL}_2(\mathbb  C) $  defined on the universal cover of $X$ 
which satisfies an equivariance
$$\forall \gamma\in\fundgroup (\curvename ),\ \ \ B(\gamma\cdot x)=B(x)\cdot \rho_A(\gamma)^{-1}$$
for a certain representation $\rho_A : \fundgroup (\curvenametop ) \rightarrow \text{SL}_2(\mathbb{C})$.
The conjugacy class of $\rho_A$ in the $\text{SL}_2(\mathbb{C})$-character variety 
$$\Rep:=\text{Hom}(\fundgroup (\curvenametop ),\text{SL}_2(\mathbb{C}))//\text{SL}_2(\mathbb{C})$$
does not depend on the initial solution and will be referred to as the {\bf monodromy class} of the system.
Also, for any $M\in\text{SL}_2(\mathbb{C})$, the monodromy class of $MAM^{-1}\in\mathfrak{sl}_2(\oneforms(\curvename ))$
coincides with that of $A$. It is therefore natural to consider the space of systems up to gauge equivalence:
$$\SYS:=\{(\curvename ,A):\curvename \in\Teich, A\in\mathfrak{sl}_2(\oneforms(\curvename ))\}//\text{SL}_2(\mathbb{C}).$$

The Riemann-Hilbert mapping is the map
$$\rh:\SYS\rightarrow\Rep$$
defined by $\rh(\curvename ,[A]):=[\rho_A]$. Both $\SYS$ and $\Rep$ are (singular) algebraic varieties of  complex dimension $6g-6$.
The irreducible locus $\SYS^{\text{irr}}\subset \SYS$ and $\Rep^{\text{irr}}\subset \Rep$, characterized by those
$A$ and $\mathrm{image}(\rho)$ without non trivial  invariant subspace, define smooth open subsets.
Then, $\text{Mon}$ induces a holomorphic mapping between these open sets.
Our main aim is to prove

\begin{theorem}\label{t:localdiffeo} If $S$ has genus two, then the holomorphic map
$$\rh:\SYS^{\text{irr}}\rightarrow\Rep^{\text{irr}}$$
is a local diffeomorphism.
\end{theorem}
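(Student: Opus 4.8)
The plan is to prove that the differential $d\rh$ is an isomorphism at every point of $\SYS^{\mathrm{irr}}$; since source and target are smooth of the same complex dimension $6g-6=6$, it suffices to establish injectivity. I would fix a point $(\curvename,[A])$ with monodromy $[\rho]$, identify the target tangent space $T_{[\rho]}\Rep^{\mathrm{irr}}$ with the group cohomology $H^1(\fundgroup(\curvenametop),\sli_{\mathrm{Ad}\rho})$, and transport it through the de Rham isomorphism to $H^1_{dR}(\curvename,(\mathrm{End}_0 E,\nabla))$, where $E=\OO^2$ is the trivial bundle carrying the flat connection $\nabla=\diff+A$. The hypercohomology of the two-term complex $\mathrm{End}_0 E\xrightarrow{\nabla}\mathrm{End}_0 E\otimes\oneforms$ carries a two-step filtration giving a short exact sequence
$$0\to F^1\to H^1_{dR}\to G\to 0,$$
with $F^1=H^0(\curvename,\mathrm{End}_0 E\otimes\oneforms)/\nabla H^0(\curvename,\mathrm{End}_0 E)$ and $G=\ker\bigl(\nabla\colon H^1(\curvename,\mathrm{End}_0 E)\to H^1(\curvename,\mathrm{End}_0 E\otimes\oneforms)\bigr)$. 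Using triviality of $E$ and irreducibility one checks $\dim F^1=\dim G=3g-3=3$.

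Next I would split the source tangent space into vertical directions (deformations of $A$ at fixed $\curvename$) and horizontal directions (deformations of the complex structure, $T_{\curvename}\Teich\cong H^1(\curvename,T_\curvename)$). On the vertical part the differential sends $\delta A\in\sli\otimes H^0(\oneforms)=H^0(\mathrm{End}_0 E\otimes\oneforms)$ to its de Rham class, the gauge directions being exactly $\nabla H^0(\mathrm{End}_0 E)$; this exhibits the vertical differential as the canonical isomorphism onto $F^1$. On the horizontal part, reading $A$ as a bundle map through $A\in H^0(\oneforms\otimes\mathrm{End}_0 E)=\mathrm{Hom}(T_\curvename,\mathrm{End}_0 E)$, the induced Kodaira--Spencer/cup-product map $A_*\colon H^1(\curvename,T_\curvename)\to H^1(\curvename,\mathrm{End}_0 E)$ computes the horizontal differential modulo $F^1$, and flatness $\nabla A=0$ forces its image into $G$. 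Injectivity of $d\rh$ then reduces to injectivity of $A_*\colon H^1(\curvename,T_\curvename)\to G$, a map between spaces of equal dimension $3$.

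The heart of the argument, and the point where genus two is indispensable, is that $A_*$ is an isomorphism. By Serre duality this is equivalent to surjectivity of the dual map $A^*\colon H^0(\mathrm{End}_0 E\otimes\oneforms)\to H^0(\oneforms^{\otimes 2})$, $\phi\mapsto\trace(\phi A)$; writing $A=\left(\begin{smallmatrix}\alpha&\beta\\\gamma&-\alpha\end{smallmatrix}\right)$, its image is the subspace $\alpha\cdot H^0(\oneforms)+\beta\cdot H^0(\oneforms)+\gamma\cdot H^0(\oneforms)\subseteq H^0(\oneforms^{\otimes 2})$. Here I would invoke two facts special to genus two. First, the multiplication $\mathrm{Sym}^2 H^0(\oneforms)\to H^0(\oneforms^{\otimes 2})$ is an isomorphism: both sides have dimension $3$, and surjectivity is the base-point-free pencil trick for the hyperelliptic canonical system. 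Second, irreducibility of the system forces $\alpha,\beta,\gamma$ to span $H^0(\oneforms)$, for otherwise $A=\omega M$ for a single form $\omega$ and a constant $M\in\sli$, whose eigenline would be $\nabla$-invariant. Combining these, the image of $A^*$ is $\omega_1 H^0(\oneforms)+\omega_2 H^0(\oneforms)=\mathrm{Sym}^2 H^0(\oneforms)=H^0(\oneforms^{\otimes 2})$, so $A^*$ is onto and $A_*$ is an isomorphism.

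With the vertical part an isomorphism onto $F^1$ and the horizontal part injective modulo $F^1$, the full differential $d\rh$ is injective, hence an isomorphism, which proves the local diffeomorphism property. The main obstacle I anticipate is not the final count but justifying the precise shape of the horizontal differential: establishing that the variation of monodromy under a change of complex structure is given by the cup product $A_*$ and that its symbol lands in $G$. Once this is in place, the genus-two surjectivity is a one-line dimension count, and its failure, reflected in the inequality $2g-1<3g-3$ for $g\ge 3$, is precisely the mechanism one expects to exploit in building the higher-genus counterexamples.
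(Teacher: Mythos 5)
Your proof is correct, and it takes a route genuinely different from both proofs in the paper. The paper's first proof descends, via Goldman's theorem on the hyperelliptic involution, to logarithmic connections on $\bbP^1$, writes the isomonodromy foliation explicitly as the Garnier Hamiltonian system in Darboux coordinates, and checks transversality to the locus of trivial bundles by a determinant computation; its second proof converts the statement into rigidity of rational curves in the moduli space $\mathcal M_{2,\rho}$ of branched projective structures and shows such curves have self-intersection $-4$. You instead compute $d\rh$ intrinsically on $\curvename$ by the hypercohomology filtration $0\to F^1\to\mathbb{H}^1\to G\to 0$: the vertical differential is the canonical isomorphism onto $F^1$ (its kernel is exactly the gauge directions $\nabla H^0(\mathrm{End}_0E)$, by irreducibility), the horizontal differential modulo $F^1$ is the cup product $A_*$ landing in $G$, and Serre duality converts injectivity of $A_*$ into surjectivity of $\phi\mapsto\trace(\phi A)$, whose image is $\alpha H^0(\oneforms)+\beta H^0(\oneforms)+\gamma H^0(\oneforms)$. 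Your two genus-two inputs are both sound: no quadric on $\bbP(H^0(\oneforms)^\ast)\simeq\bbP^1$ vanishes on the canonical image, so $\mathrm{Sym}^2H^0(\oneforms)\to H^0(\oneforms^{\otimes2})$ is injective, hence an isomorphism since $3=3$; and if $\alpha,\beta,\gamma$ spanned only a line $\C\omega$, then $A=\omega M$ with $M\in\sli$ constant would have a constant invariant eigenline, contradicting irreducibility. The step you flag as the main obstacle is indeed the only one requiring a genuine computation, but it is standard and closes as you expect: for a family $(X_\epsilon,A_\epsilon)$ over a Beltrami deformation $\mu$, the derivative of monodromy is the de Rham class of $\dot A$, whose $(0,1)$-part is $\mu\lrcorner A$; flatness to first order gives $\bar\partial\dot A^{1,0}+\nabla^{1,0}\dot A^{0,1}=0$, so the class of $\mu\lrcorner A$ lies in $G$ and represents $d\rh$ modulo $F^1$. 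As for what each approach buys: yours is the shortest, isolates exactly where genus two enters (surjectivity of canonical multiplication, which fails precisely for hyperelliptic curves of genus $\geq 3$, while in genus two all curves are hyperelliptic yet the map is onto), and yields for free the quantitative higher-genus failure, since for hyperelliptic $X$ of genus $g\geq3$ the image of $A^*$ has dimension at most $2g-1<3g-3$, so every irreducible system admits nonzero infinitesimal isomonodromic deformations; the paper's proofs are heavier but produce, respectively, the explicit local geometry of the isomonodromy foliation near $\SYS$ (the $A_1$-singularity picture of Section \ref{Sec:Comparaison}) and the eigencurve criterion of Section \ref{r:counter-example}, which is in effect the Serre dual of your criterion, read on the eigencurve $C\subset X\times\bbP^1$ rather than on $X$ itself.
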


The equivalent statement is not true in general for higher genera. Easy counterexamples in genus at least $4$  can be constructed by considering the pull back of a system on a genus two  Riemann surface $X$ by a parametrized family of ramified coverings over $X$. In genus $g=3$ there are also counterexamples (see Section \ref{s:rational curves correspond to systems} for details).
On the other hand,  we note that irreducibility
is a necessary assumption. Indeed, diagonal and nilpotent systems
$$A=\begin{pmatrix}\alpha&0\\0&-\alpha\end{pmatrix}\ \ \ \text{and}\ \ \ \begin{pmatrix}0&\alpha\\0&0\end{pmatrix}$$
admit non trivial isomonodromic deformations: this comes from isoperiodic deformations of pairs $(\curvename ,\alpha)$
that exist, as it can be seen just by counting dimensions (see also  \cite{McMullen}).

\vskip 0.5cm

{\bf Motivation.}
The question of determining the properties of the monodromy representations associated to  holomorphic $\mathfrak{sl}_2$-systems on a Riemann surface $\curvename $ of genus $g>1$ was raised by Ghys.
The motivation comes from the study of quotients $M:=\SL\slash\Gamma$ by
cocompact lattices $\Gamma\subset\SL$. These compact complex manifolds are not K\"ahler.
Huckleberry and Margulis proved in \cite{HuckleberryMargulis} that they admit no complex hypersurfaces
(and therefore no non-constant meromorphic functions).
Elliptic curves exist in such quotients, while the existence of compact curves of genus at least two remains open and is related to Ghys' question. Indeed,
assuming that for a non trivial system on a curve $\curvename$, its monodromy has image contained in $\Gamma$ (up to conjugation), then the corresponding fundamental matrix induces a non trivial holomorphic map from $\curvename$ to $M$. Reciprocally, any curve $\curvename$ in $M$ can be lifted to $\text{SL}(2,\mathbb C)$ and gives rise to the fundamental matrix of some system on $\curvename$, whose monodromy is contained in $\Gamma$. In fact, it is not known whether holomorphic $\mathfrak{sl}_2$-systems on Riemann surfaces of genus $>1$ give rise to representations with discrete or real image. Although Ghys' question remains open, our result shows { on the one hand that we can locally realize arbitrary deformations of the monodromy representation of a given system over a genus two curve by allowing deformations of both the curve and the system, and on the other that, if a genus two curve exists in some $M$ as before, it is rigid in $M$ up to left translations. }

\vskip 0.5cm

{\bf Idea of the proofs.} We propose two different proofs of our result, using isomonodromic deformations of two kinds of objects,
namely  vector bundles with connections
used by the last two authors, and branched projective structures used by the first two authors. Both proofs were obtained independently. We decided to write them together in this paper.
{
We first develop the approach with flat vector bundles in Sections \ref{sec:systems}, \ref{Sec:FuchsianSystems} and \ref{Sec:Proof1}. It is based on the work \cite{HL} by the last two authors, where the arguments occur at the level of (a finite covering of) the moduli space of systems. In more detail, one considers a system as a holomorphic $\sli$-connection
$\nabla=\diff +A$ on a trivial bundle $\curvename\times\mathbb{C}^2\to \curvename $,
and thinks of it as a point in the (larger) moduli space  $\CON$ of all triples $(\curvename ,E,\nabla)$ where $E\to \curvename$ is a holomorphic rank two vector bundle and $\nabla$ is a flat $\sli$-connection on $E$.
The subspace $\SYS$ of those triples over trivial bundles has codimension $3$.
The monodromy map is locally defined on the larger space $\CON$ and its level sets induce  a singular foliation $\Fiso$ by $3$-dimensional leaves:
the isomonodromy leaves. We note that everything is smooth in restriction to the irreducible locus, and $\rh$
is a submersion. Since $\Fiso$ and $\SYS$ have complementary dimensions,
the fact that the restriction of $\rh$ to $\SYS^{\text{irr}}$ is a local diffeomorphism is equivalent to
the transversality of $\Fiso$ to $\SYS^{\text{irr}}$. To prove this, we strongly use the hyperellipticity of genus $2$ curves
to translate our problem to some moduli space of logarithmic connections on $\bbP^1$.
The main tool here, due to Goldman, is that irreducible $\text{SL}_2(\mathbb{C})$-representations
are invariant under the hyperelliptic involution $h:X\to X$, and descend to representations of the orbifold
quotient $X/ h$.}
There, isomonodromy equations
are well-known, explicitely given by a Garnier system, and we can compute the transversality.

The approach using branched projective structures is developed in Sections \ref{s:BPS}, \ref{s:rational curves correspond to systems}, \ref{s:the tangent bundle}, \ref{s:rigidity} and \ref{s:proof}. We hope that it might be generalized to higher genus. It uses isomonodromic deformation spaces of branched complex projective
structures over a surface $S$ of genus $g\geq 2$, that were introduced in \cite{CDF}; namely given a conjugacy class of irreducible representation $\rho: \pi_1(S) \rightarrow \text{SL}(2,\mathbb C)$, and an even integer $k$, the space of complex projective structures with $k$ branch points (counted with multiplicity) and holonomy $\rho$ has the structure of a smooth $k$-dimensional complex manifold denoted by $\mathcal M_{k,\rho}$. We establish a dictionary between
\begin{enumerate}
\item[a)] Systems on $X\in\Teich$ with monodromy $\rho$
\item[b)] Regular holomorphic foliations on $X\times\mathbb{P}^1$ transverse to the $\mathbb{P}^1$-fibration and with monodromy $[\rho]\in \text{Hom}(\pi_1(S),\text{PSL}_2(\mathbb{C}))$ (Riccati foliations)
\item[c)] Complete rational curves in the space $\mathcal{M}_{2g-2,\rho}$ of branched projective structures over $S$ with branching divisor of degree $2g-2$ and monodromy $\rho$.
\end{enumerate}
The equivalence between (a) and (b) is an easy and well-known fact. The interesting aspect of the dictionary is between (a) and (c), it is discussed in Section~\ref{s:rational curves correspond to systems}. Injectivity of the differential of the Riemann-Hilbert mapping at a given $\mathfrak{sl}(2)$-system with monodromy $\rho$ is then equivalent to first order rigidity of the corresponding rational curve in $\mathcal M_{2g-2,\rho}$. In the genus two case, the moduli space $\mathcal M_{2,\rho}$ is a complex surface, and the rigidity of the rational curve is established by showing its self-intersection is equal to $-4$. In higher genus, the infinitesimal rigidity of the rational curve does not hold, counter-examples are described in Section \ref{r:counter-example}.

{
Finally, in Section \ref{Sec:Comparaison}, we compare the objects involved in the two proofs.
A branched projective structures on $X$ can be viewed as triple $(P,\mathcal F,\sigma)$ where
\begin{itemize}
\item $P\to X$ is a ruled surface (i.e. total space of a $\mathbb P^1$-bundle),
\item $\mathcal F$ is a regular Riccati foliation on $P$ (i.e. transversal to all $\mathbb P^1$-fibers),
\item $\sigma:X\to P$ is a section which is not $\mathcal F$-invariant.
\end{itemize}
Branch points come from tangencies between the section $\sigma(X)$ and the foliation $\mathcal F$.
On the other hand, a $\text{SL}(2,\mathbb C)$-connection $(E,\nabla)$ defines, after projectivization,
a projective connection $\mathbb P\nabla$ on the $\mathbb P^1$-bundle $\mathbb P E$ whose
horizontal sections are the leaves of a Riccati foliation $\mathcal F$ on the total space $P$ of the bundle.
In this setting, a section $\sigma:X\to P$ inducing a projective structure with $2g-2$ branch points
corresponds to a line sub-bundle $L\subset E$ having degree $0$, implying in particular that $E$ is
strictly semi-stable. We use this dictionary in Section \ref{Sec:Comparaison} to explain how isomonodromic
deformations considered in each proof are related. This allows us to have a better understanding of the geometry of isomonodromic deformations of $\text{SL}(2,\mathbb C)$-connections around points in the locus $\SYS$, where $E$ is a trivial bundle.
}

\section{$\sli$-systems on $\curvename $}\label{sec:systems}

Let us consider the smooth projective curve of genus two defined in an affine chart by
\begin{equation}\label{DefCurve}\curvename_{\boldsymbol{t}}:=\{y^2=x(x-1)(x-t_1)(x-t_2)(x-t_3)\}\end{equation}
for some parameter
\begin{equation}\label{defTnotTeich}\boldsymbol{t}=(t_1,t_2,t_3)\in T:=\left(\bbP^1\setminus\{0,1,\infty\}\right)^3\setminus\bigcup_{i\not=j}\{t_i=t_j\}.\end{equation}
{
  The space $T$ is a finite covering of the moduli space of genus two curves.
A $\sli$-system on $ \curvename_{\boldsymbol{t}}$ takes the form
\begin{equation}\label{DefSysA}\mathrm{d}Y+A Y=0\ \ \ \text{with}\ \ \ A=\begin{pmatrix} \alpha&\beta\\ \gamma&-\alpha\end{pmatrix}
\ \ \ \text{and}\ \ \ Y=\begin{pmatrix} y_1\\ y_2\end{pmatrix}\, , \end{equation}
}
where $\alpha, \beta, \gamma$ are holomorphic $1$-forms on $\curvename_{\boldsymbol{t}}$.
It can be seen as the equation $\nabla Y=0$ for $\nabla$-horizontal sections for the $\sli$-connection
$\nabla=\diff +A$
on the trivial bundle over $\curvename_{\boldsymbol{t}}$.

We denote by $\SYS(\curvename_{\boldsymbol{t}})$ the moduli space
of systems on $\curvename_{\boldsymbol{t} }$ modulo $\SL$-gauge action:
$$Y\mapsto MY\ \ \ \rightsquigarrow\ \ \ A\mapsto MA M^{-1},\ \ \ \text{for}\ M\in\SL.$$
An invariant is evidently given by the determinant map
$$\SYS(\curvename_{\boldsymbol{t}} )\to \mathrm{H}^0(\curvename_{\boldsymbol{t}} , \oneforms\otimes\oneforms)\ ;\
A\mapsto \det(A)=-(\alpha\otimes\alpha+\beta\otimes\gamma).$$
This map actually provides the categorical quotient for this action.
More precisely, we have (see \cite[section 3.3]{HL})

\begin{proposition}\label{PropReduSystem}The system $A$ in \eqref{DefSysA} is reducible if, and only if,
$\det(A)$ is a square, i.e. $\det(A)=\alpha'\otimes\alpha'$ for a $1$-form $\alpha'$.
Moreover, in the irreducible case, the conjugacy class of $A$ is determined by $\det(A)$.
\end{proposition}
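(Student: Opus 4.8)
The plan is to reformulate everything in terms of the quadratic form that $\det$ defines on $\sli(\C)$ and the isomorphism $\SL(\C)\cong\mathrm{SO}_3(\C)$ it induces. Write $V:=\mathrm H^0(\curvename_{\boldsymbol t},\oneforms)$, which is $2$-dimensional because $\curvename_{\boldsymbol t}$ has genus two, so that the datum $A$ is the same thing as a linear map $\Phi_A\colon V^*\to\sli(\C)$, $\xi\mapsto\left(\begin{smallmatrix}\xi(\alpha)&\xi(\beta)\\\xi(\gamma)&-\xi(\alpha)\end{smallmatrix}\right)$, the $\SL$-gauge action corresponding to post-composition with $\mathrm{Ad}(M)$. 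A direct computation shows that $\det(A)$, viewed as a quadratic form on $V^*$, is exactly the pullback $\Phi_A^*(\det\vert_{\sli})$ of the determinant form on $\sli(\C)$. The first input, special to genus two, is that the multiplication map $\mu\colon\mathrm{Sym}^2 V\to\mathrm H^0(\curvename_{\boldsymbol t},\oneforms\otimes\oneforms)$ is an isomorphism: both sides have dimension $3$, and $\omega_1^2,\omega_1\omega_2,\omega_2^2$ are independent since the ratio of a basis of $V$ is the degree-two hyperelliptic map. Through $\mu$, saying that $\det(A)$ is a square $\alpha'\otimes\alpha'$ of a $1$-form is the same as saying that $\det(A)$ is a rank $\le 1$, i.e. degenerate, quadratic form on $V^*$.

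For the equivalence I would first observe that $A$ has a non-trivial invariant subspace $\ell\subset\C^2$ (a common eigenvector of all the matrices $\Phi_A(\xi)$) if and only if the projective line $\mathbb P(\mathrm{image}\,\Phi_A)\subset\mathbb P(\sli(\C))=\bbP^2$ is tangent to the conic $C=\{\det=0\}$. Indeed, under the standard identification of $C$ with $\bbP^1$, the matrices of $\sli(\C)$ having a given eigenline $[v]$ form precisely the tangent line to $C$ at the nilpotent with kernel $\C v$. Since a line in $\bbP^2$ is tangent to $C$ exactly when the restriction of $\det$ to the corresponding plane has a double zero, i.e. is degenerate, this yields: $A$ reducible $\iff$ $\det(A)$ is degenerate on $V^*$ $\iff$ (by $\mu$) $\det(A)$ is a square. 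The degenerate cases where $\Phi_A$ is not injective, i.e. $\alpha,\beta,\gamma$ span a subspace of dimension $\le 1$, must be treated separately, but there $A=\omega A_0$ with $A_0\in\sli(\C)$ constant, so $A$ is visibly reducible and $\det(A)=\det(A_0)\,\omega^2$ is visibly a square, in agreement with the statement.

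For the ``moreover'' part I would use that irreducibility forces $\det(A)$ to be nondegenerate on $V^*$, so that $\Phi_A$ is injective and identifies $(V^*,\det A)$ isometrically with a nondegenerate $2$-plane $U_A\subset(\sli(\C),\det)$. Given two irreducible systems $A,A'$ with $\det(A)=\det(A')$, the composite $\Phi_{A'}\circ\Phi_A^{-1}$ is an isometry between the nondegenerate planes $U_A$ and $U_{A'}$, which by Witt's extension theorem extends to some $g\in\mathrm O(\sli(\C),\det)$. Since $\mathrm{Ad}\colon\SL(\C)\to\mathrm{SO}(\sli(\C),\det)=\mathrm{SO}_3(\C)$ is surjective with kernel $\pm I$, it only remains to arrange $g\in\mathrm{SO}$; this is always possible because composing $g$ with the reflection that fixes $U_{A'}$ pointwise and acts by $-1$ on its (nondegenerate, hence $1$-dimensional) orthogonal complement changes $\det g$ by a sign without altering $g\vert_{U_A}$. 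Writing $g=\mathrm{Ad}(M)$ then gives $A'=MAM^{-1}$, so the conjugacy class of an irreducible $A$ is determined by $\det(A)$.

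The main obstacle I anticipate is not the orthogonal-group bookkeeping but installing the dictionary between $A$, the embedding $\Phi_A$, and the quadratic form $\det(A)$ so that the notion of reducibility and the $\SL$-gauge action translate correctly and compatibly; in particular, the identification of $\det(A)$ with the restriction of the $\sli$-determinant form and the genus-two isomorphism $\mu$ are exactly what make the two equivalences line up, and they are precisely the ingredients that break down in higher genus.
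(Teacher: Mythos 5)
Your proof is correct, and it takes a genuinely different route from the paper's. The paper deduces Proposition \ref{PropReduSystem} from \cite[Section 3.3]{HL}: by Goldman's theorem the hyperelliptic involution acts trivially on the $\SL(\C)$-character variety, so it lifts to an automorphism of any irreducible system; normalizing that automorphism by a gauge transformation to $(x,y,y_1,y_2)\mapsto(x,-y,iy_1,-iy_2)$ forces $\alpha\equiv 0$, and both assertions are then read off from the anti-diagonal normal form \eqref{normalfA}, whose only residual ambiguity is conjugation by (anti-)diagonal matrices. You instead argue intrinsically: viewing $A$ as a linear map $\Phi_A\colon V^*\to\sli(\C)$ with $V=\mathrm{H}^0(\curvename_{\boldsymbol{t}},\oneforms)$, you translate reducibility into tangency of the line $\mathbb{P}(\mathrm{image}\,\Phi_A)\subset\mathbb{P}(\sli(\C))$ to the conic of nilpotents, hence into degeneracy of the quadratic form $\Phi_A^*(\det)$, and you obtain uniqueness of the conjugacy class from Witt's extension theorem together with surjectivity of $\mathrm{Ad}\colon\SL(\C)\to\mathrm{SO}_3(\C)$. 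Both arguments use genus two essentially, but through different doors: the paper through Goldman's theorem on the hyperelliptic involution, you through $\dim V=2$ (so that $\mathrm{image}\,\Phi_A$ is a plane in the three-dimensional space $\sli(\C)$) together with the isomorphism $\mathrm{Sym}^2V\cong\mathrm{H}^0(\curvename_{\boldsymbol{t}},\oneforms\otimes\oneforms)$, whose injectivity you correctly justify by nonconstancy of the hyperelliptic map. What the paper's route buys is the explicit normal form \eqref{normalfA} itself, which your argument does not produce but which the paper relies on repeatedly afterwards (the parametrization by $(\beta_0,\beta_1,\gamma_0,\gamma_1)$, the reducibility equation $\beta_0\gamma_1-\beta_1\gamma_0=0$, and the induced Riccati form $\mathrm{d}z=\alpha z^2+\gamma$ in the proof of Lemma \ref{lem:hyperelliptic involution on M2}); what your route buys is independence from any monodromy or character-variety input — it is pure linear algebra over $\mathrm{H}^0(\oneforms)$ — and a sharper view of exactly which genus-two features make the statement true, consistent with its failure in higher genus. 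Your handling of the degenerate case $\dim\mathrm{span}(\alpha,\beta,\gamma)\le 1$ and the reflection trick needed to pass from $\mathrm{O}(\sli(\C),\det)$ to $\mathrm{SO}$ are both necessary steps and are correctly carried out.
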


More explicitely (see \cite[proof of prop. 3.3]{HL}), in the irreducible case, we can conjugate $A$ to a normal form
\begin{equation}\label{normalfA}A=\begin{pmatrix} 0&\beta\\ \gamma&0\end{pmatrix}=\begin{pmatrix} 0&(\beta_1x+\beta_0)\frac{\diff x}{y}\\ (\gamma_1x+\gamma_0)\frac{\diff x}{y}&0\end{pmatrix}\end{equation}
with $\beta_0,\beta_1,\gamma_0,\gamma_1\in \C$.
{
 Indeed, the hyperelliptic involution \(  (x, y) \mapsto (x,- y) \) on \( X_{\bf t}\)  acts trivially on the character variety for the group \( \text{SL} (2,\C) \) by a theorem of Goldman (see \cite{Goldman2}), hence it also fixes the system. This means that we can lift the hyperelliptic involution to an automorphism of the system.
After taking a convenient gauge transformation, the automorphism takes the form \[ (x, y , y_1,y_2) \in X_{\bf t}\times \C^2 \mapsto (x, - y, i y_1, -i y_2)\in X_{\bf t} \times \C ^2,\] which immediately yields that \( \alpha \) vanishes identically.
}
The normal form (\ref{normalfA}) is unique up to conjugacy by (anti-) diagonal matrices
$$M=\begin{pmatrix}\lambda&0\\0&\lambda^{-1}\end{pmatrix}\ \ \ \text{or}\ \ \ \begin{pmatrix}0&\lambda\\-\lambda^{-1}&0\end{pmatrix}$$
and the determinant is given by
$$\det(A)=\frac{-(\beta_1x+\beta_0)(\gamma_1x+\gamma_0)}{x(x-1)(x-t_1)(x-t_2)(x-t_3)}\diff x\otimes \diff x.$$
The system of the form \eqref{normalfA} is reducible if and only if $\beta_0\gamma_1-\beta_1\gamma_0=0$. Denote by $\boldsymbol\nu=(\nu_0,\nu_1,\nu_2)\in \C^3$
the variable of the space of quadratic differentials
$$\mathrm{H}^0(\curvename , \oneforms\otimes\oneforms)\ni\frac{\nu_2x^2+\nu_1x+\nu_0}{x(x-1)(x-t_1)(x-t_2)(x-t_3)}\diff x\otimes \diff x.$$ Then Proposition \ref{PropReduSystem} can be reformulated as follows.

\begin{corollary} The moduli space $\SYS^{\text{irr}}(\curvename_{\boldsymbol{t}} )$ of irreducible $\sli$-systems over the curve $\curvename_{\boldsymbol{t}}$  identifies with
$$\SYS^{\text{irr}}(\curvename_{\boldsymbol{t}})\stackrel{\sim}{\longrightarrow}  \Nu\ ;\ A\mapsto  \boldsymbol\nu=\det(A),$$
where
$\Nu=\C^3_{\boldsymbol\nu}\setminus\{\nu_1^2-4\nu_0\nu_2=0\}$.
\end{corollary}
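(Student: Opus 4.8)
The plan is to read off the corollary as a direct consequence of Proposition~\ref{PropReduSystem} together with one explicit discriminant computation in the normal form \eqref{normalfA}. First I would check that the determinant map descends to $\SYS^{\text{irr}}(\curvename_{\boldsymbol t})$: since $\det(MAM^{-1})=\det(A)$ for $M\in\SL$, the assignment $A\mapsto\det(A)$ is gauge-invariant, hence well-defined on gauge-classes, and it is visibly a morphism to the space of quadratic differentials.

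Next I would carry out the key computation. In the normal form \eqref{normalfA} one has $\det(A)=-\beta\gamma$, whose numerator $-(\beta_1x+\beta_0)(\gamma_1x+\gamma_0)$ I expand and match against $\nu_2x^2+\nu_1x+\nu_0$, giving
$$\nu_2=-\beta_1\gamma_1,\quad \nu_1=-(\beta_0\gamma_1+\beta_1\gamma_0),\quad \nu_0=-\beta_0\gamma_0.$$
A short manipulation then yields the crucial identity
$$\nu_1^2-4\nu_0\nu_2=(\beta_0\gamma_1-\beta_1\gamma_0)^2.$$
Since a holomorphic $1$-form on $\curvename_{\boldsymbol t}$ is $\frac{(ax+b)}{y}\diff x$, the quadratic differential $\det(A)$ is a square $\alpha'\otimes\alpha'$ precisely when $\nu_2x^2+\nu_1x+\nu_0=(ax+b)^2$, that is, precisely when $\nu_1^2-4\nu_0\nu_2=0$. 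Invoking Proposition~\ref{PropReduSystem}, which identifies reducibility with $\det(A)$ being a square, I conclude that $A$ is irreducible if and only if $\nu_1^2-4\nu_0\nu_2\neq0$, equivalently $\beta_0\gamma_1-\beta_1\gamma_0\neq0$, which also recovers the stated reducibility criterion. Thus the determinant map sends $\SYS^{\text{irr}}(\curvename_{\boldsymbol t})$ exactly into $\Nu$.

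It remains to establish bijectivity onto $\Nu$. Injectivity is immediate from the second assertion of Proposition~\ref{PropReduSystem}: for irreducible $A$ the conjugacy class is determined by $\det(A)$. For surjectivity I would, given $\boldsymbol\nu\in\Nu$, factor $-(\nu_2x^2+\nu_1x+\nu_0)$ into two linear factors over $\C$ and declare these to be $\beta_1x+\beta_0$ and $\gamma_1x+\gamma_0$; the nonvanishing of the discriminant guarantees $\beta_0\gamma_1-\beta_1\gamma_0\neq0$, so the resulting normal form is irreducible and maps to $\boldsymbol\nu$. Here one must treat the degenerate case $\nu_2=0$ separately, where $\nu_1\neq0$ and one factor is constant, but this causes no difficulty.

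I do not expect a serious obstacle: the genuinely substantive input, namely that the determinant is a complete invariant on the irreducible locus and provides the categorical quotient, is already contained in Proposition~\ref{PropReduSystem} and the cited work. The only point demanding mild care is upgrading the set-theoretic bijection to an isomorphism of varieties; this follows because the determinant map is the categorical quotient morphism and the complement $\{\nu_1^2-4\nu_0\nu_2=0\}$ is exactly the image of the reducible locus, so its restriction to the irreducible locus is an isomorphism onto the open set $\Nu$.
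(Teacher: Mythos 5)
Your proposal is correct and follows essentially the same route as the paper: the paper presents this corollary as a direct reformulation of Proposition~\ref{PropReduSystem} via the normal form \eqref{normalfA}, where reducibility becomes $\beta_0\gamma_1-\beta_1\gamma_0=0$, which is exactly your discriminant identity $\nu_1^2-4\nu_0\nu_2=(\beta_0\gamma_1-\beta_1\gamma_0)^2$, and your surjectivity argument by factoring $-(\nu_2x^2+\nu_1x+\nu_0)$ into linear factors mirrors the decomposition $\boldsymbol{\nu}=\nu\cdot f_\beta\cdot f_\gamma$ the paper uses shortly afterwards. You have merely made explicit the details the paper leaves implicit, so there is nothing to correct.
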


Let now $\SYS^{\text{irr}}$ denote the family of moduli spaces $\SYS^{\text{irr}}(\curvename_{\boldsymbol{t}} )$, where the parameter $\boldsymbol{t}$ defining the curve varies in the space $T$  defined in \eqref{defTnotTeich} :
$$\SYS^{\text{irr}}:=\left\{ (\curvename_{\boldsymbol{t}} , A ) ~|~\boldsymbol{t}\in T \, , \, A\in \SYS^{\text{irr}}(\curvename_{\boldsymbol{t}})\right\}.$$
According to the above corollary, it  identifies with
\begin{equation}\label{DesSysIrr}\SYS^{\text{irr}}\stackrel{\sim}{\longrightarrow} T\times \Nu\ ;\ (\curvename_{\boldsymbol t},A)\mapsto ({\boldsymbol t},\boldsymbol\nu=\det(A))\end{equation}
where $\Nu=\C^3_{\boldsymbol\nu}\setminus\{\nu_1^2-4\nu_0\nu_2=0\}$.
Moreover, $\SYS^{\text{irr}}$ can be seen as an open set of the moduli space of systems on curves defined by
 $$\SYS:=\bigcup_{\boldsymbol{t}\in T} \SYS(\curvename_{\boldsymbol{t}}).$$
{
\begin{remark}Note that here we have slightly modified
the definition of $\SYS$  with respect to the introduction: the  parameter ${\boldsymbol t}$ defining the curve  varies not
in the Teichm\"uller space, but in the affine variety $T$ given in \eqref{defTnotTeich}. The Teichm\"uller space $\Teich$ is the universal cover of $T$. The advantage in working with the algebraic
family $(X_{\mathbf{t}})_{\mathbf{t}\in T}$ is that the isomonodromy foliation is defined by algebraic equations, which will allow us to compute transversality.
The monodromy map can still
be defined locally on the parameter space $T$ and the fact that it is a local diffeomorphism obviously does not
depend on the choice of generators for the fundamental group.
\end{remark}
}

\section{Fuchsian systems on $\bbP^1$}\label{Sec:FuchsianSystems}
Following \cite{HL}, we now describe generic $\sli$-connections $(E,\nabla)$ on $\curvename $ as modifications
of the pull-back via the hyperelliptic cover
\begin{equation}\label{HypCover}\hypcover:\curvename \to \bbP^1\ ;\ (x,y)\mapsto x\end{equation}
of certain logarithmic connections $(\underline E,\underline \nabla)$ on $\bbP^1_x$, actually systems.

We denote by ${\CON}(\bbP^1,\boldsymbol{t})$  the moduli space of logarithmic connections
$(\underline E,\underline \nabla)$ on $\bbP^1$ with polar set $\{0,1,t_1,t_2,t_3,\infty\}$ and the following spectral data:
\begin{equation}\label{PolarAndSpectralData}\left\{0\, ,\, -\frac{1}{2}\right\}\ \ \ \text{over}\ \ \ x=0,1,\infty,\ \ \ \text{and}\ \ \
\left\{0\, ,\,\frac{1}{2}\right\}\ \ \ \text{over}\ \ \ x=t_1,t_2,t_3.\end{equation}

We moreover define ${\SYS}(\bbP^1,\boldsymbol{t})$ to be the open subset of ${\CON}(\bbP^1,\boldsymbol{t})$ characterized by following the (generic) properties:
\begin{itemize}
\item $\underline E=\underline E_0$ is the trivial vector bundle;\vspace{.1cm}
\item the $(-\frac{1}{2})$-eigendirections over $x=0,1$ and the $0$-eigendirection over $x=\infty$ are pairwise distinct;\vspace{.1cm}
\item   the $0$-eigendirection over $x=\infty$ is distinct  from the $\frac{1}{2}$-eigendirection over any $x=t_i$ for $i\in \{1,2,3\}$.
\end{itemize}

Now  the eigendirections of any fuchsian system in ${\SYS}(\bbP^1,\boldsymbol{t})$  can be normalized as follows (up to $\SL$-gauge equivalence)
 \begin{equation}\label{ParabolicData}
{\renewcommand{\arraystretch}{2}
\begin{array}{| r | cccccc |}
\hline
\textrm{at the pole } x =  & 0 & 1 & t_1 & t_2 & t_3 & \infty\\ \hline
  \textrm{eigenvalue } \lambda = &-\frac{1}{2}&-\frac{1}{2}&\frac{1}{2}&\frac{1}{2}&\frac{1}{2}&0\\
\textrm{eigendirection for } \lambda &{\renewcommand{\arraystretch}{1}\begin{pmatrix}0\\ 1\end{pmatrix} }&{\renewcommand{\arraystretch}{1}\begin{pmatrix}1\\ 1\end{pmatrix} }&{\renewcommand{\arraystretch}{1}\begin{pmatrix}z_1\\ 1\end{pmatrix}} &{\renewcommand{\arraystretch}{1}\begin{pmatrix}z_2\\ 1\end{pmatrix}} &{\renewcommand{\arraystretch}{1}\begin{pmatrix}z_3\\ 1\end{pmatrix}} & {\renewcommand{\arraystretch}{.5}\begin{array}{c}\textrm{ }\\{\renewcommand{\arraystretch}{1}\begin{pmatrix}1\\ 0\end{pmatrix}}\\ \textrm{ }\end{array}}\\
\hline
\end{array}}\end{equation}
with $(z_1,z_2,z_3)\in \C^3$.
On the other hand, any fuchsian system in ${\SYS}(\bbP^1,\boldsymbol{t})$ with parabolic data \eqref{ParabolicData}   writes (as a connection on the trivial bundle)
\begin{equation}\label{UniversalConnectionKappai}
\underline \nabla=\nabla_0+c_1\Theta_1+c_2\Theta_2+c_3\Theta_3
\end{equation}
with
$$\nabla_0=\diff +\begin{pmatrix}0&0\\ 0&-\frac{1}{2}\end{pmatrix}\frac{\diff x}{x}
+\begin{pmatrix}0& -\frac{1}{2}\\ 0&-\frac{1}{2}\end{pmatrix}\frac{\diff x}{x-1}
+\sum_{i=1}^3\begin{pmatrix} 0& \frac{z_i}{2}\\ 0& \frac{1}{2}\end{pmatrix}\frac{\diff x}{x-t_i}$$
and
$$\Theta_i=\begin{pmatrix}0&0\\ 1-z_i&0\end{pmatrix}\frac{\diff x}{x}
+\begin{pmatrix}z_i& -z_i\\ z_i&-z_i\end{pmatrix}\frac{\diff x}{x-1}
+\begin{pmatrix}- z_i & z_i^2\\ -1& z_i\end{pmatrix}\frac{\diff x}{x-t_i} .$$

For given parameters $(\boldsymbol z,\boldsymbol c)=(z_1,z_2,z_3,c_1,c_2,c_3)\in \C^6$, we denote by
 $\underline\nabla_{\boldsymbol z,\boldsymbol c}$ the corresponding connection (\ref{UniversalConnectionKappai}).
The moduli space ${\SYS}(\bbP^1,\boldsymbol{t})$ defined above is thus parametrized by $\C^6_{\boldsymbol z,\boldsymbol c}$ as follows.
\begin{equation}\label{DefUnderlineSyst}\left\{\begin{matrix}
\C^6_{\boldsymbol z,\boldsymbol c}&\stackrel{\sim}{\to}&{\SYS}(\bbP^1,\boldsymbol{t})&\subset &{\CON}(\bbP^1,\boldsymbol{t})\\
(\boldsymbol z,\boldsymbol c)&\mapsto& (\underline E_0 ,\nabla_{\boldsymbol z,\boldsymbol c})
\end{matrix}\right.\end{equation}

We note that the eigendirections of $\underline\nabla_{\boldsymbol z,\boldsymbol c}$ with respect to the $0$-eigenvalue satisfy
  \begin{equation}\label{ZeroEigenvalues}
{\renewcommand{\arraystretch}{2}
\begin{array}{| r | cc  |}
\hline
\textrm{at the pole } x =  & 0 & 1  \\ \hline
  \textrm{eigenvalue } \lambda = & 0 & 0\\
\textrm{eigendirection for } \lambda &{\renewcommand{\arraystretch}{1.5}\begin{pmatrix}\frac{1}{2\sum_{i=1}^3c_i(1-z_i)}\\ 1\end{pmatrix}  }  & {\renewcommand{\arraystretch}{.5}\begin{array}{c}\textrm{ }\\{\renewcommand{\arraystretch}{1.5}\begin{pmatrix}1+\frac{1}{2\sum_{i=1}^3c_iz_i}\\ 1\end{pmatrix} }\\ \textrm{ }\end{array}}\\
\hline
\end{array}}\end{equation}

\subsection{Hyperelliptic cover}\label{subsec:hyperelliptic}

Given a fuchsian system $(\underline E_0,\underline\nabla)\in{\SYS}(\bbP^1,\boldsymbol{t})$,
we can pull it back to the curve $\curvename =\curvename_{\boldsymbol t}$ via the hyper-elliptic cover $\hypcover:\curvename \to \bbP^1_x$ given in \eqref{HypCover}.
We thus get a logarithmic connection $\widetilde{\nabla}:=\hypcover^*\underline\nabla$ on the trivial bundle  $E_0\to \curvename $,
with poles at the $6$ Weierstrass points with eigenvalues multiplied by $2$:
$$\{0,-1\}\ \ \ \text{over}\ \ \ w_0,w_1,w_\infty,\ \ \ \text{and}\ \ \
\{0,1\}\ \ \ \text{over}\ \ \ w_{t_1},w_{t_2},w_{t_3}$$
($w_i$ denotes the Weierstrass point over $x=i$).
All these poles are ``apparent singular points'' in the sense that
they disappear after a convenient birational bundle transformation.
More precisely, $\widetilde{\nabla}$-horizontal sections have at most a single pole or zero at these points
(depending on the sign of the non-zero-eigenvalue). If $E$ denotes the rank $2$ vector bundle locally generated
by $\widetilde{\nabla}$-horizontal sections, and $\phi:E\dashrightarrow E_0$ the natural birational bundle isomorphism,
then $\nabla:=\phi^*\widetilde{\nabla}$ is a holomorphic connection, by construction. In terms of \cite[section 1.5]{HL} this map $\phi$ is given by negative elementary transformations in the $0$-eigendirections over $x=0,1,\infty$, and positive elementary transformations in the $1$-eigendirections over $x=t_1,t_2,t_3$ of $\widetilde{\nabla}$.
We denote by $\Phi$ the combined map $\Phi =\phi^*\circ \hypcover^* :  (\underline E_0,\underline\nabla) \to (E,\nabla).$
\begin{equation}\label{diagramPhi}\xymatrix{ (E,\nabla)   & \ar@{..>}[l]_{\phi^*}(E_0,\widetilde{\nabla}) \\
 & (\underline E_0,\underline\nabla)  \ar[u]_{\hypcover^*}    \ar[ul]^{\Phi} }\end{equation}
More generally, we can consider a logarithmic connection $(\underline E,\underline\nabla)$
on $\bbP^1$ with  polar set $\{0,1,t_1,t_2,t_3,\infty\}$ and spectral data \eqref{PolarAndSpectralData}; by the same pull-back construction,
we get a holomorphic connection $(E,\nabla)$ on $\curvename $.
Moreover, since the trace connection $\mathrm{tr}(\underline\nabla)=\diff +\frac{1}{2}\diff \log\left(\frac{(x-t_1)(x-t_2)(x-t_3)}{x(x-1)}\right)$
has trivial monodromy after pull-back on $\curvename $, it follows that $\nabla$ is a $\sli$-connection on $E$.
We have thus defined a map between the corresponding moduli spaces of connections
\begin{equation}\label{DefPhi}\Phi:\CON(\bbP^1,\boldsymbol{t})\to\CON(\curvename_{\boldsymbol{t}})\ ;\ (\underline E,\underline\nabla)\mapsto(E,\nabla)\end{equation}
which has been studied by the last two authors in \cite{HL}.

{
\begin{theorem}[\cite{HL}]\label{Th:MainHL} The image of the map $\Phi$ defined in \eqref{DefPhi} is the moduli space $\CON^{\textrm{irr,ab}}(\curvename_{\boldsymbol{t}})$ of $\sli$-connections with irreducible or abelian monodromy. Moreover, the map  $\Phi$ defines a $2$-fold cover of $\CON^{\textrm{irr,ab}}(\curvename_{\boldsymbol{t}})$, unramified over the open set
$\CON^{\text{irr}}(\curvename_{\boldsymbol{t}})\subset\CON^{\textrm{irr,ab}}(\curvename_{\boldsymbol{t}})$ of irreducible connections.
\end{theorem}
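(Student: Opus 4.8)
The plan is to build an inverse of $\Phi$ over the irreducible locus by descending connections through the hyperelliptic quotient, Goldman's theorem being the crucial input. Let $(E,\nabla)$ be an $\sli$-connection on $\curvename_{\boldsymbol t}$ with irreducible monodromy $\rho$. Goldman's theorem says that $[\rho]$ is fixed by the hyperelliptic involution $\hypcover$, so $\hypcover^*\rho$ is conjugate to $\rho$; since an irreducible flat connection is determined up to isomorphism by its monodromy, this produces a flat isomorphism $H\colon \hypcover^*(E,\nabla)\stackrel{\sim}{\to}(E,\nabla)$, that is, a lift of $\hypcover$ to $E$ preserving $\nabla$. Normalizing $\det H=1$ fiberwise, the composite $H^2$ is a flat automorphism of $(E,\nabla)$, hence scalar by Schur; as $\det(H^2)=1$ we get $H^2=\pm\id$. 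Such an (anti-)equivariant structure is precisely the data needed to descend $(E,\nabla)$ to the quotient orbifold $\curvename_{\boldsymbol t}/\hypcover\cong\bbP^1$.

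Next I would run the descent and pin down the spectral data. Away from the Weierstrass points $\hypcover$ is \'etale, so $(E,\nabla)$ descends to a holomorphic connection on $\bbP^1\setminus\{0,1,t_1,t_2,t_3,\infty\}$; the real work is the local model at the six fixed points of $H$. There the eigenvalues of $H$ control the local monodromy of the descended connection, which is of order dividing two, forcing at worst a logarithmic pole with residue eigenvalues in $\tfrac12\Z$. Imposing that $\nabla$ be trace-free, and using that the trace connection $\mathrm{tr}(\underline\nabla)$ has trivial monodromy after pull-back, normalizes these residues to exactly the spectral data \eqref{PolarAndSpectralData} (up to a twist by $\mathrm{tr}(\underline\nabla)$, which is absorbed). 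One then verifies that applying $\Phi$---the pull-back $\hypcover^*$ followed by the elementary transformations $\phi$ of diagram \eqref{diagramPhi}---to the resulting $(\underline E,\underline\nabla)$ recovers $(E,\nabla)$, so every irreducible connection lies in the image. Abelian connections are handled directly, being themselves pull-backs from $\bbP^1$, and a connection with reducible non-abelian monodromy is seen not to occur; this identifies the image as $\CON^{\mathrm{irr,ab}}(\curvename_{\boldsymbol t})$.

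For the degree and the ramification I would follow the one remaining choice, namely the lift $H$. For irreducible $(E,\nabla)$ one has $\mathrm{Aut}(E,\nabla)=\{\pm\id\}$, so $H$ is determined only up to sign, and $H$, $-H$ are genuinely inequivalent equivariant structures because no automorphism conjugates one to the other. They yield two distinct descended connections on $\bbP^1$ carrying the same spectral data \eqref{PolarAndSpectralData}, exhibiting $\Phi$ as an unramified $2$-fold cover over $\CON^{\mathrm{irr}}(\curvename_{\boldsymbol t})$, with the two sheets swapped by the involution $H\mapsto -H$. Over the abelian boundary the automorphism group jumps to contain the diagonal torus; then $-H$ is conjugate to $H$ by $\mathrm{diag}(i,-i)$, the two lifts become equivalent, and the two preimages collide---this is the ramification announced in the statement.

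The hard part will be the local descent analysis at the Weierstrass points: showing that the equivariant structure $H$ produces only apparent logarithmic singularities with exactly the prescribed half-integer residues rather than worse poles, and tracking how the elementary transformations $\phi$ interact with the sign of $H$ so as to yield precisely two preimages over irreducible connections while genuinely ramifying over the abelian locus. Controlling these local models, and checking that the trace normalization is globally consistent, is the technical heart of the argument.
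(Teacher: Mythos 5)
First, a point of comparison: the paper does not prove Theorem~\ref{Th:MainHL} at all --- it is imported verbatim from \cite{HL}, and the only related material in the text is the Goldman-invariance argument used to derive the normal form \eqref{normalfA} and the explicit identification, in Section~\ref{Sec:FuchsianSystems}, of the locus $\Sigma_{\boldsymbol{t}}$ mapping onto systems. So your proposal can only be measured against the method of \cite{HL} --- and it is indeed that method: Goldman invariance, a flat lift $H$ of the hyperelliptic involution with $H\circ \hypcover^*H=\pm\id$ by Schur's lemma, descent to a logarithmic connection on $\bbP^1$, the two sheets of the cover given by $\pm H$, and their collision over the abelian locus. The steps you actually carry out (uniqueness of $H$ up to sign, inequivalence of $\pm H$ over the irreducible locus because $\mathrm{Aut}(E,\nabla)=\{\pm \id\}$, and the computation that $\mathrm{diag}(i,-i)$ conjugates $H$ to $-H$ in the abelian case) are correct.

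However, two genuine gaps remain, and the first breaks a step as you literally state it. You write that the (anti-)equivariant structure $H$ is ``precisely the data needed to descend $(E,\nabla)$'' to $\bbP^1$, and then descend away from the Weierstrass points by \'etaleness. This fails when $H\circ\hypcover^* H=-\id$: an $H$-invariant local section satisfies $s=-s$, so the invariant direct image $(\hypcover_*E)^{H}$ is the zero sheaf and $E$ descends to nothing, even over the unbranched locus. One must instead descend the $(\pm i)$-eigensheaves of $H$ acting on $\hypcover_*E$, or equivalently build the elementary modifications into the descent from the start; this is exactly why $\Phi$ in \eqref{diagramPhi} involves elementary transformations at all, and why the spectral data \eqref{PolarAndSpectralData} mixes the two signs $\mp\tfrac{1}{2}$. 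The anti-invariant case is not a marginal one: as the paper notes below \eqref{normalfA}, the lift of $\hypcover$ for an irreducible system is $(x,y,y_1,y_2)\mapsto(x,-y,iy_1,-iy_2)$, whose square is $-\id$, so every irreducible system --- the very case the theorem is used for --- falls into it. Second, your identification of the image is asserted rather than proved: to exclude reducible non-abelian monodromy, one should note that connections in $\CON(\bbP^1,\boldsymbol{t})$ have order-two local monodromies, hence monodromy factoring through the $(2,\dots,2)$-orbifold group of $\bbP^1$, whose abelianization is $(\Z/2)^5$; a reducible representation of this group therefore has $2$-torsion diagonal characters, and its restriction to $\fundgroup(\curvename_{\boldsymbol{t}})$ is a central character times a unipotent representation, hence abelian. (Relatedly, abelian connections on $\curvename_{\boldsymbol{t}}$ are generally not ``themselves pull-backs from $\bbP^1$''; they are pull-backs of dihedral-type reducible connections on $\bbP^1$, i.e.\ of direct images of line bundle connections.) With these two repairs your outline is a faithful reconstruction of the proof in \cite{HL}.
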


}

 Consider the set  $\Sigma_{\boldsymbol{t}}\subset  \SYS(\bbP^1,\boldsymbol{t})$ defined by
$$\Sigma_{\boldsymbol{t}}:=\left\{(\boldsymbol z,\boldsymbol c)\in \C^6 ~|~z_1-z_2=z_2-z_3=c_1+c_2+c_3=0\right\}\subset \C^6_{\boldsymbol z,\boldsymbol c}\simeq \SYS(\bbP^1,\boldsymbol{t}).$$ Note that $\Sigma_{\boldsymbol{t}}\subset  \SYS(\bbP^1,\boldsymbol{t})$ is characterized by the fact that
\begin{itemize}
\item all three $\frac{1}{2}$-eigendirections coincide;\vspace{.1cm}
\item all three $0$-eigendirections over $x=0,1,\infty$ coincide.
\end{itemize}
Indeed, these conditions are equivalent to
$$z_1=z_2=z_3=:z\ \ \ \text{and}\ \ \ c_1+c_2+c_3=0.$$
We shall now prove, in a direct and explicit way, that the set $\Sigma_{\boldsymbol{t}}$ consists in all those fuchsian systems in $ \SYS(\bbP^1,\boldsymbol{t})$  whose image $(E,\nabla)$ under $\Phi$ defines a holomorphic system on $\curvename_{\boldsymbol{t}}$ (i.e.  where $E$ is the  trivial bundle $E=E_0$). Moreover, every irreducible holomorphic system in $\SYS(\curvename_{\boldsymbol{t}})$ can be obtained in that way.

\begin{proposition} With the notation above, we have
\begin{itemize}
\item[$\bullet$] $\Phi( \Sigma_{\boldsymbol{t}} ) \subset \SYS(\curvename_{\boldsymbol{t}})$ and \vspace{.1cm}
\item[$\bullet$]  $\SYS(\curvename_{\boldsymbol{t}})^{\text{irr}} \subset \Phi( \Sigma_{\boldsymbol{t}} ).$
\end{itemize}
\end{proposition}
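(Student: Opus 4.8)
The plan is to prove the two inclusions separately, in both cases by tracking explicitly the birational bundle modification $\phi$ that defines $\Phi$. Recall from the construction preceding \eqref{DefPhi} that the holomorphic bundle $E$ underlying $\Phi(\underline E_0,\underline\nabla)$ is obtained from the trivial bundle $E_0\to\curvename_{\boldsymbol t}$ by \emph{negative} elementary transformations at the Weierstrass points $w_0,w_1,w_\infty$, performed in the (pulled-back) $0$-eigendirections over $x=0,1,\infty$, together with \emph{positive} elementary transformations at $w_{t_1},w_{t_2},w_{t_3}$ in the $\frac12$-eigendirections over $x=t_1,t_2,t_3$. The whole problem is to decide, in terms of the parameters $(\boldsymbol z,\boldsymbol c)$, when the resulting bundle $E$ is again trivial.

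\textbf{First inclusion.} Fix $(\boldsymbol z,\boldsymbol c)\in\Sigma_{\boldsymbol t}$, so that $z_1=z_2=z_3=:z$ and $c_1+c_2+c_3=0$. Then, as recorded in the characterization of $\Sigma_{\boldsymbol t}$ and read off from \eqref{ParabolicData} and \eqref{ZeroEigenvalues}, the three $\frac12$-eigendirections coincide with the constant direction $v=\begin{pmatrix}z\\ 1\end{pmatrix}$, while the three $0$-eigendirections over $x=0,1,\infty$ all degenerate to $e_1=\begin{pmatrix}1\\ 0\end{pmatrix}$ (the two finite denominators in \eqref{ZeroEigenvalues} vanish precisely because $\sum_{i=1}^3 c_i(1-z_i)=(1-z)\sum_{i=1}^3 c_i$ and $\sum_{i=1}^3 c_i z_i=z\sum_{i=1}^3 c_i$ both vanish on $\Sigma_{\boldsymbol t}$). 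Hence $E$ is obtained from $E_0$ by $\mathrm{elm}^-$ at $w_0,w_1,w_\infty$ in the single direction $e_1$ and by $\mathrm{elm}^+$ at $w_{t_1},w_{t_2},w_{t_3}$ in the single direction $v$. I would now exhibit two everywhere independent global holomorphic sections of $E$. The constant section $e_1$ points in the $\mathrm{elm}^-$-direction at $w_0,w_1,w_\infty$ and is unaffected by the $\mathrm{elm}^+$'s, so it is a nowhere-vanishing section of $E$. For the second, set $g=\frac{x(x-1)}{y}$, whose divisor on $\curvename_{\boldsymbol t}$ is exactly
\[\mathrm{div}(g)=w_0+w_1+w_\infty-w_{t_1}-w_{t_2}-w_{t_3}.\]
Thus $g\,v$ has simple zeros in its $e_2$-component at $w_0,w_1,w_\infty$ (so it points in $e_1$ there, as demanded by the $\mathrm{elm}^-$'s) and simple poles in the $v$-direction at $w_{t_1},w_{t_2},w_{t_3}$ (allowed by the $\mathrm{elm}^+$'s); a local check in the elementary-transformed frames shows $g\,v$ is in fact a nowhere-vanishing holomorphic section of $E$, everywhere independent from $e_1$. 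Consequently $(a,b)\mapsto a\,e_1+b\,g\,v$ is an isomorphism $\mathcal O^{\oplus 2}\xrightarrow{\sim}E$, so $E$ is trivial and $\Phi(\boldsymbol z,\boldsymbol c)\in\SYS(\curvename_{\boldsymbol t})$.

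\textbf{Second inclusion.} Here I would first reduce to a statement about determinants. By Proposition \ref{PropReduSystem} and the corollary identifying $\SYS^{\mathrm{irr}}(\curvename_{\boldsymbol t})$ with $\Nu=\C^3\setminus\{\nu_1^2-4\nu_0\nu_2=0\}$, an irreducible system is determined up to gauge by $\boldsymbol\nu=\det(A)\in\Nu$, and conversely any system with $\boldsymbol\nu\in\Nu$ is automatically irreducible. It therefore suffices to show that the map
\[\Sigma_{\boldsymbol t}\longrightarrow\C^3_{\boldsymbol\nu},\qquad (\boldsymbol z,\boldsymbol c)\longmapsto\det\big(\Phi(\boldsymbol z,\boldsymbol c)\big),\]
has image containing $\Nu$: then every target $\boldsymbol\nu_0\in\Nu$ is realized by some $\Phi(p)$ with $p\in\Sigma_{\boldsymbol t}$, which by the first inclusion is a system, is irreducible since $\boldsymbol\nu_0\in\Nu$, and hence is gauge-equivalent to the prescribed one. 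Since $\phi$ is a birational bundle isomorphism, $\det(\Phi(\boldsymbol z,\boldsymbol c))$ equals, as a meromorphic quadratic differential, the pullback under $\hypcover$ of the tracefree determinant of $\underline\nabla_{\boldsymbol z,\boldsymbol c}$, its holomorphicity on $\curvename_{\boldsymbol t}$ fixing $\boldsymbol\nu$; restricting \eqref{UniversalConnectionKappai} to $\Sigma_{\boldsymbol t}$ (that is, $z_i=z$, $c_3=-c_1-c_2$) yields explicit formulas $\nu_j=\nu_j(z,c_1,c_2)$, and I would compute the Jacobian at a convenient point to see the map is dominant.

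The main obstacle is precisely upgrading this dominance to surjectivity onto $\Nu$: a dominant polynomial map $\C^3\to\C^3$ need not be onto. I see two ways to close the gap and would pursue whichever is cleaner in the explicit coordinates. One is to show the map above is finite (equivalently proper) onto $\Nu$, so that its image is at once dense and closed in $\Nu$, hence all of it. The other, more structural route is to invoke Theorem \ref{Th:MainHL}: every irreducible $\sli$-connection on $\curvename_{\boldsymbol t}$, in particular every irreducible system, lies in the image of $\Phi$, so it is enough to prove the converse of the first inclusion, namely that the $\Phi$-preimage of a system lying in $\SYS(\bbP^1,\boldsymbol t)$ must satisfy the defining equations of $\Sigma_{\boldsymbol t}$. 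This converse should follow by running the sub-line-bundle bookkeeping above backwards: if the $\frac12$-eigendirections, or the $0$-eigendirections over $0,1,\infty$, fail to coincide, the elementary transformations no longer admit a compatible pair of global sections and one produces in $E$ a sub-line-bundle of positive degree, contradicting triviality of $E$. The delicate point in this route, which I expect to be where the real work lies, is to guarantee that the preimage supplied by Theorem \ref{Th:MainHL} already lies in the open set $\SYS(\bbP^1,\boldsymbol t)$, i.e. that the bundle upstairs is trivial and the genericity conditions hold, so that the characterization of $\Sigma_{\boldsymbol t}$ inside $\SYS(\bbP^1,\boldsymbol t)$ applies.
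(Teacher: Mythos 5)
Your first inclusion is correct, and it takes a genuinely different (more intrinsic) route than the paper: you trivialize $E$ abstractly by exhibiting the two independent global sections $e_1$ and $g\,v$, $g=\tfrac{x(x-1)}{y}$, whereas the paper realizes the same trivialization concretely as a gauge computation — first the constant gauge $\begin{pmatrix}1&z\\0&1\end{pmatrix}$, then the meromorphic gauge $\mathrm{diag}\bigl(1,\tfrac{x(x-1)}{y}\bigr)$ implementing $\phi$ — which produces the explicit system matrix \eqref{MatrixConnectionFibreTrivial} with entries $b(x)\tfrac{\diff x}{y}$, $c(x)\tfrac{\diff x}{y}$ given by \eqref{FormulaBC}. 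That explicit formula is not a matter of taste: it is exactly what makes the second inclusion immediate, and it is there that your proposal has a genuine gap.

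For the second bullet you correctly reduce, via Proposition \ref{PropReduSystem}, to surjectivity of $(\boldsymbol z,\boldsymbol c)\mapsto\det\Phi(\boldsymbol z,\boldsymbol c)$ from $\Sigma_{\boldsymbol t}$ onto $\Nu$, but you prove only dominance, and neither of your proposed repairs closes the gap. Route 1 fails outright: the map is \emph{not} proper onto $\Nu$. In coordinates it is $(z,\mathbf c)\mapsto\boldsymbol\nu=-b(x)c(x)$, and over a point $\boldsymbol\nu_\infty\in\Nu$ with $\boldsymbol\nu_\infty(1/2)=0$ the factorization assigning the root $1/2$ to the $b$-factor would require $b\propto x-\tfrac12$, i.e. $z=\infty$; so along a sequence $\boldsymbol\nu_n\to\boldsymbol\nu_\infty$ with the root $\tfrac12+\eps_n$, one of the two preimages has $z_n\to\infty$, and preimages of compact sets are not compact. (The paper records precisely this phenomenon: over $\{\boldsymbol\nu(1/2)=0\}$ there is a single preimage, the second appearing only ``in a convenient compactification''.) A quasi-finite dominant map that is not proper need not be surjective, so this route cannot be completed as stated. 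Route 2, via Theorem \ref{Th:MainHL}, could be made to work, but the point you yourself flag as delicate — that the preimage supplied by that theorem has trivial bundle on $\bbP^1$ and satisfies the normalization defining $\SYS(\bbP^1,\boldsymbol t)$ — is the whole content (a degree-zero bundle on $\bbP^1$ may be $\OO(k)\oplus\OO(-k)$), and you offer no argument for it. The paper needs none of this machinery: since $b(x)=\tfrac{(2z-1)x-z}{2}$ sweeps out, as $z$ runs over $\C$, all linear polynomials not proportional to $x-\tfrac12$ (up to scale, with $z=\tfrac12$ giving the constants), and $\mathbf c\mapsto c(x)$ is a linear isomorphism from the plane $\{c_1+c_2+c_3=0\}$ onto polynomials of degree $\le 1$ (the relevant $3\times 3$ determinant is a Vandermonde-type expression in the distinct $t_i$), surjectivity follows by \emph{factoring}: any $\boldsymbol\nu\in\Nu$ has two distinct roots, so one may choose $x_\beta\neq\tfrac12$, set $z=\tfrac{x_\beta}{2x_\beta-1}$, and solve $c(x)=-\boldsymbol\nu(x)/b(x)$ linearly for $\mathbf c$; this is the paper's explicit inverse \eqref{eq:sectionPhi}. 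In short, the dominance-to-surjectivity gap you worry about dissolves once one notices the map is ``multiply two linear polynomials'' and inverts it by factorization.
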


\begin{proof}
  Consider  the connection $\underline\nabla_{\boldsymbol z,\boldsymbol c}$ defined in  (\ref{UniversalConnectionKappai}) corresponding to a point in $\Sigma_{\boldsymbol{t}}$.
After gauge transformation by the (constant) matrix
$$\begin{pmatrix}1&z\\0&1\end{pmatrix}$$
the connection matrix becomes of the form
\begin{equation}\label{MatrixNabla}
\begin{pmatrix}0&\frac{b(x)\diff x}{x(x-1)}\\ \frac{c(x)\diff x}{\prod_{i=1}^{3}(x-t_i)}&\frac{1}{2}\diff \log\left(\frac{\prod_{i=1}^{3}(x-t_i)}{x(x-1)}\right)\end{pmatrix}\quad \text{with}
\end{equation}
\begin{equation}\label{FormulaBC}
\ \ \ \left\{\begin{matrix}
b(x)&=&\frac{(2z-1)x-z}{2}\hfill\\
c(x)&=&-(t_1c_1+t_2c_2+t_3c_3)x-(t_1t_2c_3+t_2t_3c_1+t_3t_1c_2) .
\end{matrix}\right.\end{equation}
The connection matrix, after lift to $\curvename_{\boldsymbol{t}}$ via $\hypcover$ (see \eqref{HypCover}), can of course also be written as \eqref{MatrixNabla}, we just have to keep in mind that $x$ is not an appropriate local coordinate near a Weierstrass point (but $y$ is, and hence the residues are double, see \eqref{DefCurve}). The birational bundle transformation $\phi$ defined in \eqref{diagramPhi} consists in three negative elementary transformations (in the $0$-eigendirections over $x=0,1,\infty$), and three positive elementary transformations (in the $1$-eigendirections over $x=t_1,t_2,t_3$).
In the particular case \eqref{MatrixNabla} we are considering here, $\phi^*$ is given explicitly by the meromorphic gauge transformation
$$ \begin{pmatrix}1&0\\0&\frac{x(x-1)}{y}\end{pmatrix} , $$
yielding the following matrix connection on the (trivial) bundle $E:=\phi^*E_0\simeq E_0$
\begin{equation}\label{MatrixConnectionFibreTrivial}
A=\begin{pmatrix}0&\beta\\ \gamma&0\end{pmatrix}:=\begin{pmatrix}0&b(x)\frac{\diff x}{y}\\ c(x)\frac{\diff x}{y}&0\end{pmatrix}.
\end{equation}
Following Proposition \ref{PropReduSystem}, the system defined by $A$ is reducible if, and only if,
the holomorphic $1$-forms $\beta$ and $\gamma$ are proportional, i.e. share the same zeros:
\begin{equation}\label{RedLocus}
\frac{z}{1-2z}=\frac{t_1t_2c_3+t_2t_3c_1+t_3t_1c_2}{t_1c_1+t_2c_2+t_3c_3}.
\end{equation}
The explicit restriction map $\Phi\vert_{\Sigma_{\boldsymbol{t}}}:\Sigma_{\boldsymbol{t}}\to\SYS^\times( \curvename_{\boldsymbol{t}})$, where $\SYS^\times( \curvename_{\boldsymbol{t}})\subset \SYS( \curvename_{\boldsymbol{t}})$ denotes the moduli space of all non trivial systems on $\curvename_{\boldsymbol{t}}$ and $\Phi$ is defined by $\Phi=\phi^*\circ \hypcover^*$ as in  \eqref{diagramPhi},  is surjective and \'etale over the open set
$\SYS^{\text{irr}}( \curvename_{\boldsymbol{t}})$ of irreducible systems $(\curvename_{\boldsymbol{t}},A)$. Indeed, let $\boldsymbol{\nu}=\nu_2x^2+\nu_1 x+\nu_0$ be an element of $\SYS( \curvename_{\boldsymbol{t}})$. Let $\boldsymbol{\nu}=\nu \cdot f_\beta \cdot f_\gamma$ be a decomposition, where $\nu \in \mathbb{C}^*$ and $f_\beta,f_\gamma$ are unitary polynomials in $x$ of degree at most $1$. If $f_\beta\neq x-\frac{1}{2}$, then there is a unique choice of $z\in \mathbb{C}$ such that $b(x)$, given by formula \eqref{FormulaBC} is a scalar multiple of $f_\beta(x)$. Furthermore, there is a unique choice of $\mathbf{c}\in \mathbb{C}^3$ such that  $c(x)$, given by formula \eqref{FormulaBC}, equals  $-\frac{\boldsymbol{\nu}(x)}{b(x)}$. Hence if $x=\frac{1}{2}$ is not a root of $\nu$, we get precisely two preimages. Otherwise, we get precisely one preimage, which is however not double (a second preimage would appear in a convenient compactification of $\Sigma_{\boldsymbol{t}}$ in $\CON(\bbP^1,\boldsymbol{t})$). Explicitely, we generically have a decomposition of the form
$\boldsymbol{\nu}=\nu(x-x_\beta)(x-x_\gamma)$, with $2x_\beta-1\not=0$. Then
we get the explicit preimage
\begin{equation}\label{eq:sectionPhi}
c_i=2\nu\frac{t_i-x_\gamma}{(t_i-t_j)(t_i-t_k)}(2x_\beta-1)\ \ \ \ \ \ \text{for}\ \{i,j,k\}=\{1,2,3\}
\end{equation}
$$\text{and}\ \ \ z=\frac{x_\beta}{2x_\beta-1}.$$
\end{proof}

\subsection{Darboux coordinates}\label{Sec:DarbouxCoordinates}
Moduli spaces of connections on curves have a natural (holomorphic) symplectic structure.
In the case of ${\CON}(\bbP^1,\boldsymbol{t})$, the symplectic two-form is given in the affine chart
${\SYS}(\bbP^1,\boldsymbol{t})$ by $\omega=\diff z_1\wedge \diff c_1+\diff z_2\wedge \diff c_2+\diff z_3\wedge \diff c_3$.
The classical Darboux coordinates, that will be needed to describe isomonodromy equations, are defined as follows.

The vector  $e_1={}^t(1,0)$
becomes an eigenvector of the matrix connection $\nabla_{\boldsymbol z,\boldsymbol c}$
for $3$ values of $x=q_1,q_2,q_3$ (counted with multiplicity),
namely at the zeros of the $(2,1)$-coefficient of the matrix connection:
\begin{equation}\label{Equaqk}
\sum_{i=1}^3c_i\frac{(z_i-t_i)x-t_i(z_i-1)}{x-t_i}=\left(\sum_{i=1}^3c_i(z_i-t_i)\right)\frac{\prod_{k=1}^3(x-q_k)}{\prod_{i=1}^3(x-t_i)}
\end{equation}
At each of the three solutions $x=q_k$ of (\ref{Equaqk}), the eigenvector $e_1={}^t(1,0)$ is associated to the eigenvalue
\begin{equation}\label{Equapk}
p_k:=\sum_{i=1}^3c_iz_i\left(\frac{1}{q_k-1}-\frac{1}{q_k-t_i}\right).
\end{equation}
The equations (\ref{Equaqk}) and (\ref{Equapk}) allow us to express our initial variables
$(z_1,z_2,z_3,c_1,c_2,c_3)$ as rational functions of new variables $(q_1,q_2,q_3,p_1,p_2,p_3)$ as follows.
Define
$$\Lambda:=\sum_{\{i,j,k\}=\{1,2,3\}} \frac{p_i(q_i-t_1)(q_i-t_2)(q_i-t_3)}{(q_i-q_j)(q_i-q_k)}.$$
For $i=1,2,3$, denote
$$\Lambda_i:=\Lambda\vert_{t_i=1}$$
the rational function obtained by setting $t_i=1$ in the expression of $\Lambda$.
Then we have, for $\{i,j,k\}=\{1,2,3\}$
\begin{equation}\label{Equacizi}
c_i=-\frac{(q_1-t_i)(q_2-t_i)(q_3-t_i)}{t_i(t_i-1)(t_i-t_j)(t_i-t_k)}\Lambda\ \ \ \text{and}\ \ \
z_i=t_i\frac{\Lambda_i}{\Lambda}.
\end{equation}
The rational map
\begin{equation}\label{Canonical6Cover}
\Darbouxcover:\C^6_{\boldsymbol q,\boldsymbol p}\dashrightarrow \C^6_{\boldsymbol z,\boldsymbol c}\simeq{\SYS}(\bbP^1,\boldsymbol{t})
\end{equation}
has degree $6$: the (birational) Galois group of this map is the permutation group on indices $k=1,2,3$
for pairs $(q_k,p_k)$.
 In these new coordinates, the symplectic form writes
$$\omega=\sum_{i=1}^3\diff z_i\wedge \diff c_i=\sum_{k=1}^3\diff q_k\wedge \diff p_k.$$

\subsection{Isomonodromy equations as a Hamiltonian system}\label{Sec:DarbouxIsom}
We now let the polar parameter $\boldsymbol{t}=(t_1,t_2,t_3)$ vary in the space $T$ defined in \eqref{defTnotTeich}.
A local deformation $\boldsymbol{t}\mapsto(\curvename_{\boldsymbol{t}},E_{\boldsymbol{t}},\nabla_{\boldsymbol{t}})\in\CON(\curvename_{\boldsymbol{t}})$
is said to be isomonodromic if the corresponding monodromy representation $\boldsymbol{t}\mapsto\rho_{\boldsymbol{t}}\in\Rep$
is constant. To define this latter arrow, we can choose and locally follow a system of generators for the fundamental group;
the isomonodromy condition is clearly independent of the choice. In the moduli space of triples
$$\CON:=\bigcup_{\boldsymbol{t}\in T} \CON(\curvename_{\boldsymbol{t}}),$$
isomonodromic deformations parametrize the leaves of a smooth holomorphic foliation $\mathcal F$,
namely the isomonodromic foliation. More precisely, $\Fiso $ has dimension $3$ and is transversal
to the projection $\curvename :\CON\to T$; moreover, holonomy induces symplectic analytic isomorphisms between fibers $\CON(\curvename_{\boldsymbol{t}})$. The foliation $\Fiso $ is also called the non-linear Gauss-Manin connection
in \cite[section 8]{Simpson}, and it is proved there that $\CON$ is quasi-projective and $\Fiso $ defined
by polynomial equations. However, it is difficult to provide explicit equations in $\CON$.

The construction of $\Phi$ in section \ref{subsec:hyperelliptic} can be performed on the monodromy setting
(see \cite[section 2.1]{HL}) and therefore commutes with isomonodromic deformations. In Darboux coordinates,
isomonodromic deformation equations are well-known as the Hamiltonian form of the Garnier system.
Let us recall them explicitely.
For $i=1,2,3$ define $H_i$ by
$$t_i(t_i-1)\prod_{j\not=i}(t_j-t_i)\cdot H_i:=$$
$$\sum_{j=1}^3\frac{\prod_{k\not=j}(q_k-t_i)}{\prod_{k\not=j}(q_k-q_j)}F(q_j)\left(p_j^2-G(q_j)p_j+\frac{p_j}{q_j-t_i}\right),$$
where $F(x)=x(x-1)(x-t_1)(x-t_2)(x-t_3)$ and $G(x)=\frac{F'(x)}{2F(x)}$ (where $F'$ is the derivative with respect to $x$).

Any local analytic map $\boldsymbol{t}\mapsto (\boldsymbol{q}(\boldsymbol{t}),\boldsymbol{p}(\boldsymbol{t}))$ induces, via the $6$-fold cover $\Darbouxcover:\C^6_{\boldsymbol q,\boldsymbol p}\dashrightarrow \C^6_{\boldsymbol z,\boldsymbol c}\simeq{\SYS}(\bbP^1,\boldsymbol{t})$,
a deformation $\boldsymbol{t}\mapsto(\underline E_0,\underline\nabla_{\boldsymbol{z},\boldsymbol{c}})\in{\SYS}(\bbP^1,\boldsymbol{t})$.

\begin{theorem}[Okamoto \cite{Okamoto}]The local deformation induced by $\boldsymbol{t}\mapsto (\boldsymbol{q}(\boldsymbol{t}),\boldsymbol{p}(\boldsymbol{t}))$
is isomonodromic if, and only if,
\begin{equation}\label{GarnierHamiltonian}
\frac{\partial q_k}{\partial t_i}=\frac{\partial H_i}{\partial p_k}\ \ \ \text{and}\ \ \ \frac{\partial p_k}{\partial t_i}=-\frac{\partial H_i}{\partial q_k}\ \ \ \forall i,k=1,2,3.
\end{equation}
\end{theorem}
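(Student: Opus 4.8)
The plan is to reduce the statement to the classical matrix form of the isomonodromy equations and then transport it through the explicit Darboux coordinates $(\boldsymbol q,\boldsymbol p)$. First I would reformulate the isomonodromy condition for the family $\boldsymbol t\mapsto(\underline E_0,\underline\nabla_{\boldsymbol z,\boldsymbol c})$ as the flatness of an extended connection. Writing $\underline\nabla_{\boldsymbol z,\boldsymbol c}=\diff+A\,\diff x$ with $A=A(x,\boldsymbol t)=\sum_{j}\frac{A_j}{x-t_j}$ its partial-fraction decomposition over the poles $\{0,1,t_1,t_2,t_3,\infty\}$, the deformation is isomonodromic if and only if there exist matrix one-forms $B_i=B_i(x,\boldsymbol t)$ such that $\diff+A\,\diff x+\sum_i B_i\,\diff t_i$ is flat on $\bbP^1\times T$ minus the polar divisor; a fundamental solution $Y(x,\boldsymbol t)$ then propagates the monodromy at constant conjugacy class. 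Analyzing the poles in $x$ of $\partial_{t_i}Y\cdot Y^{-1}$ forces $B_i=-\frac{A_i}{x-t_i}$, and the integrability condition collapses to the Schlesinger system $\partial_{t_i}A_j=\frac{[A_j,A_i]}{t_j-t_i}$ for $j\neq i$ and $\partial_{t_i}A_i=-\sum_{j\neq i}\frac{[A_i,A_j]}{t_i-t_j}$. This equivalence is classical (Schlesinger); only the $\sli$-part intervenes, since the fixed trace part of the spectral data \eqref{PolarAndSpectralData} has trivial isomonodromic dynamics.

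Second, I would exploit the Hamiltonian nature of the Schlesinger flow. The chart $\SYS(\bbP^1,\boldsymbol{t})$ carries the symplectic form $\omega=\sum_i\diff z_i\wedge\diff c_i=\sum_k\diff q_k\wedge\diff p_k$, and, exactly as recorded for $\CON$, isomonodromic holonomy induces symplectic isomorphisms between nearby fibres $\SYS(\bbP^1,\boldsymbol{t})$. Consequently the isomonodromy vector field $X_i$, i.e. the lift of $\partial_{t_i}$ tangent to $\Fiso$, preserves $\omega$ along the fibres and is therefore locally Hamiltonian: $\iota_{X_i}\omega=\diff H_i$ for functions $H_i$ on $\C^6_{\boldsymbol z,\boldsymbol c}$, unique up to additive functions of $\boldsymbol t$, with the compatibility between the $H_i$ guaranteed by the commutativity of the flows. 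The theory of Jimbo--Miwa--Ueno identifies these Hamiltonians with the residue expression $H_i=\mathrm{Res}_{x=t_i}\tfrac12\mathrm{tr}\big(A(x,\boldsymbol t)^2\big)\,\diff x=\sum_{j\neq i}\frac{\mathrm{tr}(A_iA_j)}{t_i-t_j}$. Since the cover $\Darbouxcover$ is symplectic and a local biholomorphism away from its branch locus, the condition $\iota_{X_i}\omega=\diff H_i$ transported into the canonical coordinates $(\boldsymbol q,\boldsymbol p)$ reads verbatim as the Hamilton equations \eqref{GarnierHamiltonian}.

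Third and finally, I would carry out the explicit change of variables to match the displayed Garnier Hamiltonian. Using the definitions \eqref{Equaqk} and \eqref{Equapk} of $q_k$ (zeros of the $(2,1)$-entry, where $e_1$ is an eigendirection) and $p_k$ (the corresponding eigenvalue), together with the inverse formulas \eqref{Equacizi}, I would substitute the residues $A_j=A_j(\boldsymbol q,\boldsymbol p,\boldsymbol t)$ into the residue Hamiltonian and simplify to the stated sum involving $F(x)=x(x-1)(x-t_1)(x-t_2)(x-t_3)$ and $G=F'/2F$. The main obstacle is precisely this last identification: it is the long but mechanical computation reducing the abstract Jimbo--Miwa--Ueno Hamiltonian, re-expressed through \eqref{Equacizi}, to the explicit rational function of the theorem, and it demands careful bookkeeping of the partial-fraction structure of $A$, of the fixed eigenvalues \eqref{PolarAndSpectralData}, and of the apparent-singularity coordinates. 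Everything upstream is structural and essentially formal, so that all the genuine content of Okamoto's formula sits in verifying that the residue Hamiltonian collapses to the displayed expression; this is the point at which we merely invoke \cite{Okamoto}.
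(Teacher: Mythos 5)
This theorem is not proved in the paper at all: it is stated as a classical result, with \cite{Okamoto} standing in for the proof. Your proposal, after its structural reductions, ends in exactly the same place --- your final paragraph concedes that the identification of the transported Hamiltonians with the displayed rational expression, which is where all the genuine content lies, is ``the point at which we merely invoke \cite{Okamoto}''. So in substance your treatment and the paper's coincide: both delegate the theorem to Okamoto. Your added scaffolding (isomonodromy as flatness of an extended connection, the Schlesinger form of the deformation equations, the Jimbo--Miwa--Ueno residue Hamiltonians, symplectic transport through the degree-six cover $\Darbouxcover$) is the standard classical route and is sound in outline. If you wanted to upgrade it to an actual proof, the two points needing care beyond the deferred computation are: (i) the Schlesinger ansatz $B_i=-A_i/(x-t_i)$ presupposes a normalization at the fixed poles $0,1,\infty$, and the raw Schlesinger flow does not preserve the parabolic normalization \eqref{ParabolicData} (the residues at the fixed poles still evolve by conjugation), so one must use the conjugation-invariance of the trace Hamiltonians $\sum_{j\neq i}\mathrm{tr}(A_iA_j)/(t_i-t_j)$ to descend to the normalized chart $(\boldsymbol z,\boldsymbol c)$; and (ii) Hamilton's equations transport through $\Darbouxcover$ verbatim only away from its critical locus $(q_1-q_2)(q_2-q_3)(q_1-q_3)=0$, the remaining points being handled by continuity.
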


In other words, the isomonodromic foliation $\underline{\Fiso }$ on ${\SYS}(\bbP^1,*)=\bigcup_{\boldsymbol{t}}{\SYS}(\bbP^1,\boldsymbol{t})$ is defined by the kernel of the $2$-form
$$\Omega=\sum_{k=1}^3\diff q_k\wedge \diff p_k + \sum_{i=1}^3dH_i\wedge dt_i.$$
The tangent space to the foliation is also defined by the $3$ vector fields
\begin{equation}\label{IsomonVectFields}
V_i:=\frac{\partial}{\partial t_i}+\sum_{k=1}^3\left(\frac{\partial H_i}{\partial p_k}\right)\frac{\partial}{\partial q_k}-\sum_{k=1}^3\left(\frac{\partial H_i}{\partial q_k}\right)\frac{\partial}{\partial p_k}
\end{equation}
fir $i=1,2,3$.
Note that the polar locus of these vector fields is given by $$(q_1-q_2)(q_2-q_3)(q_1-q_3)=0,$$ namely the critical
locus of the map (\ref{Canonical6Cover}).

\subsection{Transversality in Darboux coordinates}\label{subsec:CompuTransvDarboux}

Let $\Sigma:=\bigcup_{\boldsymbol{t}}\Sigma_{\boldsymbol{t}}$ denote the locus of those systems
in ${\SYS}(\bbP^1,*)$ that lift under $\Phi$ as a connection $(\curvename ,E,\nabla)$ on the trivial bundle $E=E_0$.
From the characterization described in section \ref{subsec:hyperelliptic}, we have
$$\Sigma:=\{z_1-z_2=z_2-z_3=c_1+c_2+c_3=0\}\subset T\times\C^6_{ \boldsymbol{z},\boldsymbol{c}}.$$
Condition $c_1+c_2+c_3=0$ implies that two of the $q_i$'s are located on $x=0,1$;
after fixing, say $q_1=0$ and $q_2=1$, we can
determine $q_3$ from the matrix connection, as well as all $p_i$'s. In particular, we find $p_3=0$.
Denote
$$\Sigma^{\textrm{Darb}}:=\{q_1=q_2-1=p_3=0\} \subset T\times\C^6_{ \boldsymbol{q},\boldsymbol{p}}.$$
{\bf If we assume} $q_3\not=0,1,\infty$ for the moment (which implies $q_i\not= q_j$ for all $i,j$), the locus of non-trivial connections on the trivial bundle $E_0$ in $\CON $ is parametrized by the image under $\Phi$ of
\begin{equation}\label{eq:paramSigmaDarboux}
\Darbouxcover\vert_{\Sigma^{\textrm{Darb}}}\ :\ {\Sigma^{\textrm{Darb}}}\longrightarrow\Sigma
\end{equation} defined, for $z_1=z_2=z_3=:z$ by
$$(p_1,p_2,q_3)\mapsto\left\{\begin{array}{ccl}
1-\frac{1}{z}&=&\frac{(t_1-1)(t_2-1)(t_3-1)}{t_1t_2t_3}\frac{q_3}{q_3-1}\frac{p_2}{p_1}\vspace{.2cm}\\
c_1&=&\frac{t_1t_2t_3(q_3-1)p_1-(t_1-1)(t_2-1)(t_3-1)q_3p_2}{q_3(q_3-1)}\frac{q_3-t_1}{(t_1-t_2)(t_1-t_3)}\vspace{.2cm}\\
c_2&=&\frac{t_1t_2t_3(q_3-1)p_1-(t_1-1)(t_2-1)(t_3-1)q_3p_2}{q_3(q_3-1)}\frac{q_3-t_2}{(t_2-t_1)(t_2-t_3)}\vspace{.2cm}\\
c_3&=&\frac{t_1t_2t_3(q_3-1)p_1-(t_1-1)(t_2-1)(t_3-1)q_3p_2}{q_3(q_3-1)}\frac{q_3-t_3}{(t_3-t_1)(t_3-t_2)}\, .
\end{array}\right.$$
Note that $c_i=0$  if $q_3=t_i$.  The locus of non-transversality of the isomonodromy foliation with $\Sigma^{\textrm{Darb}}$ is contained in the zero locus of the determinant
\begin{equation}\label{DetFormule}\det\left(V_i\cdot F_j\right)\vert_{\Sigma^{\textrm{Darb}}}=
\frac{t_1t_2t_3(q_3-1)^2p_1+(t_1-1)(t_2-1)(t_3-1)q_3^2p_2}{8(t_1-t_2)(t_2-t_3)(t_1-t_3)q_3^2(q_3-1)^2} ,\end{equation}
where $(F_1,F_2,F_3)=(q_1,q_2,p_3)$.
Reversing the map \eqref{eq:paramSigmaDarboux} above, we get
\begin{equation}\label{eq:reversePi}
\begin{matrix}\Darbouxcover^{-1}\ :\ \Sigma&\longrightarrow&\Sigma^{\textrm{Darb}}\end{matrix} \end{equation} defined by
$$\begin{matrix}(z,c_2,c_3)&\mapsto&(p_1,p_2,q_3) = \left(\frac{z}{t_1t_2t_3}Q_0\, ,\frac{z-1}{(t_1-1)(t_2-1)(t_3-1)}Q_1\, , -\frac{Q_0}{Q_\infty} \right)\, , \end{matrix} $$
where we denote  $$Q_0:=t_2t_3c_1+t_1t_3c_2+t_1t_2c_3\, , \quad Q_{\infty}:=t_1c_1+t_2c_2+t_3c_3, , \quad Q_{1}:=Q_0+Q_{\infty}\, .$$
Note that the set $\{Q_k=0\}$ on $\Sigma$ corresponds to $q_3=k$ in Darboux coordinates.
After substitution, the numerator of the determinant \eqref{DetFormule} vanishes if and only if
\begin{equation}\label{RedLocusNew}
\frac{z}{1-2z}=\frac{Q_0}{Q_\infty}.
\end{equation}
We here recognize the locus given in equation \eqref{RedLocus} where the monodromy is reducible. We have now established the main ingredients for the first proof of Theorem \ref{t:localdiffeo}.

\section{First proof of the main theorem}\label{Sec:Proof1}

Consider the map
$$\begin{matrix}{\SYS}(\bbP^1,*)
&\stackrel{\Phi}{\rightarrow}&\CON  ,
\end{matrix}$$
where we use the notations of the previous sections. In particular, $\CON$ denotes the moduli space of $\sli$-connections over curves of the form $\curvename_{\boldsymbol{t}}$ with $\boldsymbol{t}\in T$, ${\SYS}(\bbP^1,*)$ denotes the moduli space of fuchsian rank $2$ systems over $\bbP^1$ with polar set $\{0,1,t_1,t_2,t_3,\infty\}$ and spectral data \eqref{PolarAndSpectralData}  and $\Phi$ is the hyperelliptic lifting map (see section \ref{subsec:hyperelliptic}). We have seen that the locus $\SYS^{\textrm{irr}}\subset \CON$ of irreducible connections defined on the trivial bundle over curves $\curvename_{\boldsymbol{t}}$ (i.e. irreducible holomorphic systems over genus $2$ curves) is contained in the image   $\Phi ({\SYS}(\bbP^1,*)).$ Moreover,    the preimage under $\Phi$ of the locus $\SYS \subset \CON$ of holomorphic systems over genus $2$ curves is given by
$$\Sigma:=\{z_1=z_2=z_3=:z\, , \, c_1+c_2+c_3=0\}\subset T\times\C^6_{ \boldsymbol{z},\boldsymbol{c}}\simeq {\SYS}(\bbP^1,*) .$$
{
Since $\Phi$ is \'etale over $\SYS^{\textrm{irr}}$, the isomonodromy foliation in $\CON$ is transversal to $\SYS^{\textrm{irr}}$ if and only if the lift of the isomonodromy foliation is transversal to $\Sigma^{\textrm{irr}}$ in ${\SYS}(\bbP^1,*)$, where $\Sigma^{\textrm{irr}}=\Sigma \setminus \Sigma^{\textrm{red}}$ and $\Sigma^{\textrm{red}}$ is the lift of the reducible locus in $\SYS$ calculated in equations \eqref{RedLocus} and \eqref{RedLocusNew}
$$\Sigma^{\textrm{red}}:=\left\{(\boldsymbol{t},\boldsymbol{z},\boldsymbol{c})\in \Sigma ~\middle|~ Q_0 z+Q_1(z-1)+Q_\infty=0\right\} .$$
Now consider the rational map $\Psi|_{\Sigma^{\textrm{Darb}}} : \Sigma^{\textrm{Darb}} \dashrightarrow \Sigma$ in equation \eqref{eq:paramSigmaDarboux}, where $$\Sigma^{\textrm{Darb}}:=\{q_1=q_2-1=p_3=0\} \subset T\times\C^6_{ \boldsymbol{q},\boldsymbol{p}}\, .$$
\begin{lemma}\label{lem41} With respect to the notation above and
$$Q^{\textrm{Darb}}:=q_3(q_3-1)(t_1t_2t_3(q_3-1)p_1-(t_1-1)(t_2-1)(t_3-1)q_3p_2) \, , $$
the restriction of $\Psi|_{\Sigma^{\textrm{Darb}}}$ to $\Sigma^{\textrm{Darb}}\setminus \{Q^{\textrm{Darb}}=0\}$ is a well defined map, taking values in $\Sigma \setminus \{Q_0Q_1Q_\infty=0\}$. This corestricted map is moreover bijective and maps $\Sigma^{\textrm{Darb,red}}\setminus \{Q^{\textrm{Darb}}=0\}$ onto $\Sigma^{\textrm{red}} \setminus \{Q_0Q_1Q_\infty=0\}$, where
  $$\Sigma^{\textrm{Darb,red}}:=\left\{(\boldsymbol{t},\boldsymbol{q},\boldsymbol{p})\in \Sigma^{\textrm{Darb}} ~\middle|~t_1t_2t_3(q_3-1)^2p_1+(t_1-1)(t_2-1)(t_3-1)q_3^2p_2=0\right\}\, . $$
\end{lemma}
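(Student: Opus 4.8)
The plan is to verify directly, working in the explicit coordinates already set up, that the map $\Psi|_{\Sigma^{\textrm{Darb}}}$ of \eqref{eq:paramSigmaDarboux} and its proposed inverse $\Psi^{-1}$ of \eqref{eq:reversePi} are genuinely inverse to one another on the indicated open sets, and that they interchange the two reducibility loci. First I would record exactly what the two explicit formulas are: \eqref{eq:paramSigmaDarboux} sends $(p_1,p_2,q_3)$ to a triple $(z,c_1,c_2,c_3)$ (constrained by $c_1+c_2+c_3=0$, so effectively to $(z,c_2,c_3)$), while \eqref{eq:reversePi} sends $(z,c_2,c_3)$ to $(p_1,p_2,q_3)$ via the combinations $Q_0,Q_1,Q_\infty$. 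Since $\Sigma$ is cut out by $z_1=z_2=z_3=:z$ and $c_1+c_2+c_3=0$ inside $T\times\C^6_{\boldsymbol z,\boldsymbol c}$, it is a three-dimensional family over $T$ with coordinates $(z,c_2,c_3)$; similarly $\Sigma^{\textrm{Darb}}$ is three-dimensional over $T$ with coordinates $(p_1,p_2,q_3)$. So both source and target are three-dimensional over $T$ fibrewise, and bijectivity is a fibrewise statement for fixed $\boldsymbol t$.

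The key computational step is the composition check. I would substitute the formulas \eqref{eq:reversePi} for $(p_1,p_2,q_3)$ in terms of $(z,c_2,c_3)$ (through $Q_0,Q_1,Q_\infty$) into \eqref{eq:paramSigmaDarboux} and confirm that the output reproduces $(z,c_2,c_3)$; the identity $Q_1=Q_0+Q_\infty$ together with the relation $c_1=-(c_2+c_3)$ on $\Sigma$ should make the rational expressions collapse. The denominators appearing are precisely $q_3(q_3-1)$ and the quantity $t_1t_2t_3(q_3-1)p_1-(t_1-1)(t_2-1)(t_3-1)q_3p_2$, whose product with $q_3(q_3-1)$ is $Q^{\textrm{Darb}}$; hence the map is well defined exactly where $Q^{\textrm{Darb}}\ne 0$. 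On the target side, the remark in the excerpt that $\{Q_k=0\}$ on $\Sigma$ corresponds to $q_3=k$ in Darboux coordinates identifies the removed divisors: $q_3=0,1,\infty$ correspond to $Q_0=0$, $Q_1=0$, $Q_\infty=0$ respectively, so the corestriction lands in $\Sigma\setminus\{Q_0Q_1Q_\infty=0\}$ and is invertible there. I would present the two composition verifications as the substance of the proof, quoting the outcome rather than every intermediate line.

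For the reducibility statement I would compare the two defining equations. The locus $\Sigma^{\textrm{Darb,red}}$ is cut out by $t_1t_2t_3(q_3-1)^2p_1+(t_1-1)(t_2-1)(t_3-1)q_3^2p_2=0$, which is exactly the numerator of the transversality determinant \eqref{DetFormule}; the locus $\Sigma^{\textrm{red}}$ is cut out by the relation \eqref{RedLocusNew}, equivalently $Q_0z+Q_1(z-1)+Q_\infty=0$ using $Q_0/Q_\infty=z/(1-2z)$. The content is that these two equations correspond under the substitution \eqref{eq:reversePi}. I would plug $p_1=\frac{z}{t_1t_2t_3}Q_0$, $p_2=\frac{z-1}{(t_1-1)(t_2-1)(t_3-1)}Q_1$, and $q_3=-Q_0/Q_\infty$ into the Darboux reducibility equation; the factors $t_1t_2t_3$ and $(t_1-1)(t_2-1)(t_3-1)$ cancel against those hidden in $p_1,p_2$, and after clearing the common denominator $Q_\infty^2$ one is left, up to a nonvanishing factor, with $Q_0z+Q_1(z-1)+Q_\infty$, i.e.\ the equation defining $\Sigma^{\textrm{red}}$. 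This shows the bijection restricts to the claimed bijection between reducible loci.

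The main obstacle is purely bookkeeping: keeping the three symmetric combinations $Q_0,Q_1,Q_\infty$ and the relation $c_1=-(c_2+c_3)$ straight while clearing denominators, and making sure the cancellations in the composition are genuine identities rather than identities only on $\Sigma$. The safest route, and the one I would follow, is to treat everything as rational functions on $\C^6_{\boldsymbol z,\boldsymbol c}$ restricted to the linear slice $\Sigma$, so that all three relations $z_1=z_2=z_3$ and $c_1+c_2+c_3=0$ are imposed from the outset; then the verifications reduce to algebraic identities in $z,c_2,c_3,\boldsymbol t$ (respectively $p_1,p_2,q_3,\boldsymbol t$) that can be checked symbolically. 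No conceptual difficulty is expected beyond confirming that the excluded divisors match up exactly, which is guaranteed by the already-noted dictionary $\{Q_k=0\}\leftrightarrow\{q_3=k\}$.
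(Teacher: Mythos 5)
Your proposal is correct and takes essentially the same approach as the paper: the paper's own proof just notes that the polar locus of $\Psi|_{\Sigma^{\textrm{Darb}}}$ is $\{Q^{\textrm{Darb}}=0\}$ by inspection of \eqref{eq:paramSigmaDarboux}, that \eqref{eq:reversePi} gives the inverse, and declares the rest a ``straightforward computation.'' Your write-up simply makes that computation explicit --- the two composition checks and the substitution identifying $\Sigma^{\textrm{Darb,red}}$ with $\Sigma^{\textrm{red}}$ up to the nonvanishing factor $Q_0Q_1/Q_\infty^2$ --- which is exactly what the paper leaves implicit.
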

\begin{proof}
The polar locus of  $\Psi|_{\Sigma^{\textrm{Darb}}}$ is given by $\{Q^{\textrm{Darb}}=0\}$ according to equation \eqref{eq:paramSigmaDarboux}. On the other hand, we have an inverse of the rational map $\Psi|_{\Sigma^{\textrm{Darb}}}$ given by equation \eqref{eq:reversePi}. The claim now follows from straightforward computation. \end{proof}

\begin{lemma} There is a neighborhood $U$ of $\Sigma^{\textrm{Darb}}\setminus \{Q^{\textrm{Darb}}=0\}$ in $T\times\C^6_{ \boldsymbol{q},\boldsymbol{p}}$  such that $\Psi|_{U} : U\to T\times\C^6_{ \boldsymbol{c},\boldsymbol{z}}$, defined by equation \eqref{Canonical6Cover},  is a local diffeomorphism onto its image. \end{lemma}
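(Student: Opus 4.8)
The plan is to show that the rational map $\Darbouxcover$ from \eqref{Canonical6Cover}, when restricted to a suitable neighborhood of $\Sigma^{\textrm{Darb}}\setminus\{Q^{\textrm{Darb}}=0\}$, has everywhere nonzero Jacobian, which by the inverse function theorem gives a local diffeomorphism onto its image. Since $\Darbouxcover$ is a degree-$6$ cover whose Galois group permutes the pairs $(q_k,p_k)$, the only place where it fails to be a local diffeomorphism is its critical locus. As recorded just before Section~\ref{Sec:Proof1}, the polar locus of the isomonodromy vector fields $V_i$ in \eqref{IsomonVectFields}, and equivalently the critical locus of $\Darbouxcover$, is exactly $\{(q_1-q_2)(q_2-q_3)(q_1-q_3)=0\}$. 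So the heart of the argument is to verify that this discriminant does not vanish on the relevant set.

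First I would parametrize points of $\Sigma^{\textrm{Darb}}$ by recalling its defining equations $q_1=0$, $q_2=1$, $p_3=0$. On this locus the three roots are $q_1=0$, $q_2=1$, $q_3$, so the discriminant $(q_1-q_2)(q_2-q_3)(q_1-q_3)$ equals $-q_3(q_3-1)$, up to sign. Hence $\Darbouxcover$ degenerates along $\Sigma^{\textrm{Darb}}$ precisely where $q_3\in\{0,1\}$, i.e. where $q_3$ collides with one of the already-fixed roots $q_1,q_2$. I would then observe that the factor $q_3(q_3-1)$ is exactly a factor of $Q^{\textrm{Darb}}$ as defined in Lemma~\ref{lem41}, so removing $\{Q^{\textrm{Darb}}=0\}$ removes this degeneracy. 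This confirms that $\Darbouxcover$ restricted to $\Sigma^{\textrm{Darb}}\setminus\{Q^{\textrm{Darb}}=0\}$ lands away from the critical locus, matching the standing assumption ``$q_3\neq0,1,\infty$'' made in Section~\ref{subsec:CompuTransvDarboux}, which the parenthetical there notes implies $q_i\neq q_j$ for all $i,j$.

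Having established that every point of $\Sigma^{\textrm{Darb}}\setminus\{Q^{\textrm{Darb}}=0\}$ avoids the critical locus of $\Darbouxcover$, I would invoke that the full six-dimensional map $\Darbouxcover:\C^6_{\boldsymbol q,\boldsymbol p}\dashrightarrow\C^6_{\boldsymbol z,\boldsymbol c}$ (extended over the varying parameter $\boldsymbol t\in T$ to a map $T\times\C^6_{\boldsymbol q,\boldsymbol p}\dashrightarrow T\times\C^6_{\boldsymbol z,\boldsymbol c}$ that is the identity on the $\boldsymbol t$-factor) has nonvanishing Jacobian at each such point. Since being an immersion/local diffeomorphism is an open condition, for each point of $\Sigma^{\textrm{Darb}}\setminus\{Q^{\textrm{Darb}}=0\}$ there is an open neighborhood in $T\times\C^6_{\boldsymbol q,\boldsymbol p}$ on which $\Darbouxcover$ is a biholomorphism onto its image. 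Taking the union $U$ of these neighborhoods over the whole locus yields the desired open set, completing the statement.

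The main obstacle I anticipate is purely bookkeeping rather than conceptual: one must make sure the local-diffeomorphism property genuinely extends to a single open neighborhood $U$ of the whole locus $\Sigma^{\textrm{Darb}}\setminus\{Q^{\textrm{Darb}}=0\}$, and not merely to a neighborhood of each point separately—that is, one needs that no two branches of the degree-$6$ cover are identified on $U$. Since the sheets of $\Darbouxcover$ are separated by the ordering of the three distinct values $(q_1,q_2,q_3)=(0,1,q_3)$, and $q_3$ is pinned away from $\{0,1\}$ on the complement of $\{Q^{\textrm{Darb}}=0\}$, the three roots stay mutually distinct on a neighborhood, so the sheet containing $\Sigma^{\textrm{Darb}}$ is well-separated and $U$ can be chosen uniformly. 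This is exactly the content guaranteed by the polar-locus computation, so the argument reduces to the elementary discriminant observation above.
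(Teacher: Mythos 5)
Your argument has a genuine gap: it never verifies that $\Darbouxcover$ is actually holomorphic (finite--valued) at the points of $\Sigma^{\textrm{Darb}}\setminus\{Q^{\textrm{Darb}}=0\}$, which is a prerequisite for speaking of its Jacobian and invoking the inverse function theorem. The map \eqref{Canonical6Cover} is given by the formulas \eqref{Equacizi}: each $c_i$ is a multiple of $\Lambda$ and $z_i=t_i\Lambda_i/\Lambda$. Hence its degeneracy locus, as a map to $T\times\C^6_{\boldsymbol z,\boldsymbol c}$, contains not only the discriminant $\{(q_1-q_2)(q_2-q_3)(q_1-q_3)=0\}$ (where $\Lambda$, and so the $c_i$, have poles) but also the hypersurface $\{\Lambda=0\}$, where the $z_i$ have poles; this second hypersurface is \emph{not} contained in the discriminant, so ``off the discriminant'' does not imply ``local diffeomorphism'', and the note after \eqref{IsomonVectFields} --- an unproven aside about the polar locus of the vector fields $V_i$ --- cannot carry that weight (read as you read it, it is simply false). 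Concretely, on $\Sigma^{\textrm{Darb}}$ (where $q_1=0$, $q_2=1$, $p_3=0$) one computes
$$\Lambda\vert_{\Sigma^{\textrm{Darb}}}=-\,\frac{t_1t_2t_3(q_3-1)p_1-(t_1-1)(t_2-1)(t_3-1)q_3p_2}{q_3(q_3-1)}\, ,$$
so $\{\Lambda=0\}$ meets $\Sigma^{\textrm{Darb}}$ exactly along the zero set of the \emph{third} factor of $Q^{\textrm{Darb}}$. That factor is in the definition of $Q^{\textrm{Darb}}$ precisely to exclude these poles; your proof, which reduces everything to the factor $q_3(q_3-1)$ (``the elementary discriminant observation''), never engages with it, and therefore cannot rule out that $\Darbouxcover$ blows up at a point of $\Sigma^{\textrm{Darb}}$ with $q_3\neq 0,1$. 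Note also that the lemma's content is precisely a statement about where $\Darbouxcover$ degenerates near $\Sigma^{\textrm{Darb}}$, so resting the whole proof on a stated-but-unproved claim about that same degeneracy locus is circular in spirit.

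The paper's proof takes a different and self-contained route: having noted (via Lemma \ref{lem41}) that there is no indeterminacy on $\Sigma^{\textrm{Darb}}\setminus\{Q^{\textrm{Darb}}=0\}$, it builds an explicit holomorphic local inverse near the image $\Sigma\setminus\{Q_0Q_1Q_\infty=0\}$: equation \eqref{Equaqk} recovers the monic cubic $\prod_{k}(x-q_k)$ because its leading coefficient $\sum_i c_i(z_i-t_i)=\sum_i c_iz_i-Q_\infty$ is nonvanishing there (on $\Sigma$ one has $\sum_i c_iz_i\equiv 0$ and $Q_\infty\neq 0$), and the a priori indeterminate expressions in \eqref{Equapk} for the $p_k$ are rewritten using this cubic so as to become holomorphic. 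Your strategy can be repaired rather than abandoned: once you check $\Lambda\neq 0$ on $\Sigma^{\textrm{Darb}}\setminus\{Q^{\textrm{Darb}}=0\}$ by the computation above, $\Darbouxcover$ is holomorphic on a neighborhood, and the identity $\sum_{i}\diff z_i\wedge\diff c_i=\sum_{k}\diff q_k\wedge\diff p_k$ of Section \ref{Sec:DarbouxCoordinates} forces the pullback of $\diff z_1\wedge\diff c_1\wedge\diff z_2\wedge\diff c_2\wedge\diff z_3\wedge\diff c_3$ to equal the corresponding form in $(\boldsymbol q,\boldsymbol p)$, so the Jacobian is nowhere zero wherever the map is holomorphic. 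But as written, the essential verification is missing.
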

\begin{proof}
Since $\Psi|_{\Sigma^{\textrm{Darb}}}$ is well defined outside $\{Q^{\textrm{Darb}}=0\}$, the indeterminacy locus of $\Psi$ does not intersect $\Sigma^{\textrm{Darb}}\setminus \{Q^{\textrm{Darb}}=0\}$. On the other hand, $\Psi$ has a local analytic section (a right inverse) given by equations  (\ref{Equaqk}) and (\ref{Equapk})
and is rational. We have to check that $\Psi^{-1}$ has no indeterminacy points on $\left(\Sigma\setminus \{Q_0Q_1Q_\infty=0\}\right)= \Psi\left( \Sigma^{\textrm{Darb}}\setminus \{Q^{\textrm{Darb}}=0\}\right)$.
Since $c_1z_1+c_2z_2+c_3z_3$ vanishes identically on $\Sigma$, the expression $c_1z_1+c_2z_2+c_3z_3-Q_\infty$ is nowhere vanishing in a neighborhood of $\Sigma\setminus \{Q_0Q_1Q_\infty=0\}$. This implies that equation  (\ref{Equaqk}) provides a unitary polynomial of degree $3$ defining $\prod_{k=1}^3(x-q_k)$ from $(\boldsymbol{t}, \boldsymbol{c}, \boldsymbol{z})$ in this neighborhood. Equation (\ref{Equapk}) defining $(p_1,p_2,p_3)$ from $(q_1,q_2,q_3)$ and $(\boldsymbol{t}, \boldsymbol{c}, \boldsymbol{z})$  however contains indeterminacy summands like
$$\frac{\sum_{i=1}^3c_iz_i}{q_2-1}\ \ \ \text{and}\ \ \ \frac{c_iz_i}{q_3-t_i}.$$
 However, we can modify these expressions by using the expression of $F(x):=(x-q_1)(x-q_2)(x-q_3)$
defined by formula (\ref{Equaqk}) and the fact that $q_2-1=-\frac{F(1)}{(q_1-1)(q_3-1)}$ for instance.
By this way, we actually get an alternate formula for $p_2$ and $p_3$
which is now well defined near $\Sigma\setminus \{Q_0Q_1Q_\infty=0\}$. The claim now follows from Lemma \ref{lem41}. \end{proof}
In the space  $ T\times\C^6_{ \boldsymbol{q},\boldsymbol{p}}$  of Darboux coordinates we have explicit expressions of \begin{itemize}
\item[$\bullet$] the isomonodromy foliation and
\item[$\bullet$] the lift via $\Phi\circ \Darbouxcover$ of the locus $\SYS$ of the trivial bundle in $\CON$: it contains $\Sigma^{\textrm{Darb,irr}}:=\Sigma^{\textrm{Darb}}\setminus \Sigma^{\textrm{Darb,red}}$ as a large open subset (see Section \ref{Sec:DarbouxIsom}).
\end{itemize}
}
We have seen by direct  computation that the isomonodromy foliation is transversal to $\Sigma^{\textrm{Darb,irr}}$. It follows that the isomonodromy foliation is transversal to $\Sigma^{\textrm{irr}}\setminus\{Q_0Q_1Q_\infty=0\}$.
It remains to check transversality for systems corresponding to points in the special subset $\{Q_0Q_1Q_\infty=0\}$ of $\Sigma^{\textrm{irr}}$.
Yet in the definition of ${\SYS}(\bbP^1,{\boldsymbol{t}})$ and ${\CON}(\bbP^1,{\boldsymbol{t}})$ in Section \ref{Sec:FuchsianSystems}, the polar set was split in half according to two types of parabolic data. The fact that we associated the $\left\{0,-\frac{1}{2}\right\}$-type to the poles at $I^-:=\{x=0,1,\infty\}$ and the $\left\{0,\frac{1}{2}\right\}$-type to the poles at $I^+:=\{x=t_1,t_2,t_3\}$  was arbitrary. For any $k\in I^-$, we can perform the same construction setting $I^-:=\{x=0,1,\infty, t_1\}\setminus \{x=k\}$ and $I^+:=\{x=k,t_2,t_3\}$ for instance. The correspondence between the former and the new construction of ${\CON}(\bbP^1,{\boldsymbol{t}})$ is given by a particular birational bundle isomorphism $\phi^{\textrm{mod}}$ on the corresponding logarithmic connections on $\bbP^1$, namely the combination of a negative elementary transformation in the $0$-eigendirection over the pole $x=k$ and a positive elementary transformation in the $(-\frac{1}{2})$-eigendirection over the pole $x=t_1$. Since $\phi^{\textrm{mod}}(\underline{E}_0) \simeq \underline{E}_0$, where $\underline{E}_0$ denotes the trivial bundle on $\bbP^1$ as usual, $\phi^{\textrm{mod}}$ actually defines a reparametrization of ${\SYS}(\bbP^1,{\boldsymbol{t}})$ (and in particular, of $\Sigma^{\textrm{irr}}$), which can also be seen as a Moebius transformation in the base.  One can easily check that a system in the former construction of $\Sigma^{\textrm{irr}}$ corresponding to a point in $\{Q_k=0\}$ is no longer contained in the special subset of the new construction of $\Sigma^{\textrm{irr}}$. In summary, transversality of the isomonodromy foliation to $$\Phi(\Sigma^{\textrm{irr}}\setminus\{Q_0Q_1Q_\infty=0\})\subset \SYS^{\textrm{irr}}$$ implies transversality at any point of $\SYS^{\textrm{irr}}$ by a reparametrization of the family of curves $$\bigcup_{\boldsymbol{t}\in T}X_{\boldsymbol{t}}.$$
This finishes the first proof of Theorem \ref{t:localdiffeo}. Next we proceed to introduce the ingredients for the second proof of Theorem \ref{t:localdiffeo}.

\section{Branched projective structures}\label{s:BPS}
Given a compact oriented topological surface $\curvenametop$, a branched projective structure $\sigma$ over $\curvenametop$ is the data of a covering $\curvenametop=\bigcup U_i$ by open sets and for each $U_i$ a finite branched cover preserving the orientation $\varphi_i:U_i\rightarrow V_i\subset \bbP^1$ such that on any $U_i\cap U_j\neq \emptyset$ there exists a homography $A_{ij}:\bbP^1\rightarrow\bbP^1$ satisfying $$\varphi_{i}=A_{ij}\circ\varphi_j\quad\text{ on }\quad U_i\cap U_j.$$ By abuse of language, we will call each $\varphi_i$ a chart of the branched projective structure.
The branching divisor of $\sigma$ is the divisor $\text{div}(\sigma)=\sum (n_p-1)p$ where $n_p$ is the order of branching of the map $\varphi_i$ around $p\in U_i\subset \curvenametop$.
The complex structure of $\bbP^1$ can be pulled back to $\curvenametop\setminus|\text{div}(\sigma)|$ by imposing that each $\varphi_i$ is holomorphic. The complex structure thus defined extends to a unique complex structure $\curvename $ on $\curvenametop$. Note that any chart of $\sigma$ can be analytically extended along any path in $\curvename $.

 Two  branched projective structures $\sigma_1$ and $\sigma_2$ over $\curvenametop$ are said to be equivalent if there exists a homeomorphism $(\curvenametop,\sigma_1)\rightarrow(\curvenametop,\sigma_2)$ that is projective in the corresponding charts and that induces the identity on the fundamental group.

 If we fix a universal covering map $\widetilde{\curvenametop}\rightarrow \curvenametop$ of $\curvenametop$ with $\fundgroup(\curvenametop)=\text{Aut}(\widetilde{\curvenametop}|\curvenametop)$, we can associate a class of equivariant maps defined on $\widetilde{S}$ to any branched projective structure on $S$, called {\em developing map}. Indeed, take a chart $\varphi$ of $\sigma$, we define a  devoloping map $\mathcal{D}:\widetilde{\curvenametop}\rightarrow\bbP^1$ by extending the chart $\varphi$ along paths in $\curvenametop$. The map $\mathcal{D}$ is equivariant with respect to a representation $\rho:\fundgroup (\curvenametop)\rightarrow\text{PSL}_2(\mathbb{C})$, i.e. for each $\gamma\in\fundgroup (\curvenametop)$, $$\mathcal{D}(\gamma\cdot z)=\rho(\gamma)\circ\mathcal{D}(z).$$
The map $\mathcal{D}$ is holomorphic with respect to the lift $\widetilde{\curvename}$ of the complex structure $\curvename$ on $\curvenametop$ to $\widetilde{\curvenametop}$.
Reciprocally, given a complex structure $\curvename $ on $\curvenametop$ we can lift it to a complex structure  $\widetilde{\curvename}$ on $\widetilde{\curvenametop}$. A holomorphic map $\mathcal{D}:\widetilde{\curvename }\rightarrow\bbP^1$ that is equivariant with respect to some representation $\rho:\fundgroup (\curvenametop)\rightarrow\text{PSL}_2(\mathbb{C})$ induces  a branched projective structure on $\curvenametop$ locally defined by $\mathcal{D}$.

From now on, we fix a universal covering map $\widetilde{\curvenametop}\rightarrow \curvenametop$. At the level of developing maps, equivalence of branched projective structures can be read in the following terms: if $A\in\text{PSL}_2(\mathbb{C})$ and $\mathcal{D}$ is a developing map of a branched projective structure with equivariance $\rho$, then \begin{equation}
\mathcal{D}'=A\circ\mathcal{D}\label{eq:developing maps} \end{equation} defines an equivalent branched projective structure with equivariance \begin{equation}\rho'=A\circ\rho\circ A^{-1}.\label{eq:equivariance}\end{equation} Reciprocally, any pair of developing maps $(\mathcal{D},\rho)$, $(\mathcal{D}',\rho')$ associated to equivalent branched projective structures satisfies equations (\ref{eq:developing maps}) and (\ref{eq:equivariance}) for some $A\in\text{PSL}_2(\mathbb{C})$. In particular, two developing maps $\mathcal{D}$ and $\mathcal{D}'$ obtained from different charts of the same $\sigma$ satisfy this last equivalence. Although developing maps are not in general unique, we have that if $\sigma$ is a branched projective structure for which the image of the equivariance is a group with trivial centralizer, then for each representation $\rho:\fundgroup (\curvenametop)\rightarrow\text{PSL}_2(\mathbb{C})$ in the conjugacy class, there is a unique developing map $\mathcal{D}$ associated to $\sigma$ having equivariance $\rho$.

{
Another natural way of constructing branched projective structures is to consider compact curves in complex manifolds $V$ that are generically transverse to a codimension one (singular) holomorphic foliation that is transversely projective. Here, by transversely projective foliation of $V$, we mean  a system of local submersions on $V$, with values in $\mathbb P^1$, which are first integrals of the foliation and which are well-defined up to post-composition by an automorphism of $\mathbb P^1$.  Indeed, the restriction of the local submersions to a smooth compact holomorphic curve $\curvename \subset V$ that is generically transverse to the foliation and avoids its singular set, defines a branched projective structure. Its branching divisor coincides with that of tangencies between the curve and the foliation. The first examples of such a construction are (regular) Riccati foliations, that is, regular holomorphic foliations transverse to $\bbP^1$-bundles over compact curves. They are actually transversely projective, if we consider as local submersions the maps defined on local trivializations of the bundle as ``projection \emph{along the foliation} to a $\bbP^1$-fibre''. Each non-invariant holomorphic section of the bundle induces a branched projective structure on the base curve of the bundle.
}
 In fact, any projective structure on a complex structure $\curvename\in \Teich $ occurs in this way: given a holomorphic developing map $\mathcal{D}:\widetilde{\curvename }\rightarrow\bbP^1$ of a branched projective structure with equivariance $\rho$ we can consider the section of the $\bbP^1$-bundle $\curvename \times_{\rho}\bbP^1\rightarrow \curvename $ defined by $\mathcal{D}$. It defines a curve that is generically transverse to the Riccati foliation induced on  $\curvename \times_{\rho}\bbP^1$ by the horizontal foliation on $\widetilde{\curvename }\times\bbP^1$. Remark that, if $\rho(\pi_1(S))$ has trivial centralizer, the uniqueness of the developing map associated to $\rho$ implies that two different sections of $\curvename \times_{\rho}\bbP^1$ define different branched projective structures on the same complex structure $X$.

Given a natural number $k\in\mathbb{N}$ we define $\mathcal{M}_k$ to be the set of equivalence classes of branched projective structures $\sigma$ on $\curvenametop$ having $k$ critical points counted with multiplicity (i.e. $\text{deg(div}(\sigma))=k$) whose equivariance has trivial centralizer. We have a natural ``monodromy'' map \begin{equation} \label{eq:monodromy of bps} \mathcal{M}_k\rightarrow \text{Hom}(\pi_1(S),\text{PSL}_2(\mathbb{C}))//\text{PSL}_2(\mathbb{C})\end{equation} that associates to each $\sigma$ the point in the $\text{PSL}_2(\mathbb{C})$-character variety associated to its equviariance homomorphism. The fiber over a given representation $\rho:\fundgroup (\curvenametop)\rightarrow\text{PSL}_2(\mathbb{C})$ is denoted by $\mathcal{M}_{k,\rho}$.

\begin{theorem} [\cite{CDF}] \label{t:BPS}
  If $S$ has genus $g\geq 2$, $\rho$ has trivial centralizer and $\mathcal{M}_{k,\rho}\neq\emptyset$\footnote{If the representation is non elementary, then $\mathcal M_{k,\rho}$ is non empty when $k$ is even and $\rho$ lifts to $\text{SL}(2,\mathbb C)$ or when $k$ is odd and $\rho$ does not lift to $\text{SL}(2,\mathbb C)$, see \cite{GKM}. The precise condition when $\rho$ is not elementary but has trivial centralizer is not known, as far as we know.}, then it admits the structure of a complex manifold of dimension $k$.
\end{theorem}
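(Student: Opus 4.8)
The plan is to realize $\mathcal{M}_{k,\rho}$ as the smooth, unobstructed base of a Kuranishi family for a two-term deformation complex, and to read off both smoothness and dimension from the branch divisor. Since $\rho$ has trivial centralizer, the excerpt already records that the $\rho$-equivariant developing map of a branched projective structure is unique; consequently a point of $\mathcal{M}_{k,\rho}$ is exactly the datum of a pair $(X,\mathcal D)$, where $X$ is a complex structure on $S$ and $\mathcal D\colon\widetilde X\to\bbP^1$ is a holomorphic $\rho$-equivariant map whose branch divisor $D$ has degree $k$; equivalently, through the dictionary of Section~\ref{s:BPS}, a non-$\mathcal F$-invariant holomorphic section $\sigma$ of the flat $\bbP^1$-bundle $P_X=\widetilde X\times_\rho\bbP^1$. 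The equivalence relation on structures becomes trivial on such pairs, so I would work directly with the moduli of pairs $(X,\mathcal D)$, with $\rho$ held fixed.

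The heart of the argument is to identify the deformation theory of such a pair with the hypercohomology of the complex
\[
K^\bullet\ :\quad T_X\ \xrightarrow{\ d\mathcal D\ }\ \mathcal D^*T\bbP^1
\]
placed in degrees $0$ and $1$ on $X$; this is Horikawa's complex for deformations of a holomorphic map with fixed target, and it simultaneously governs deformations of the source $X$ (through $H^1(T_X)$) and of the map. Equivariance of $\mathcal D$ makes $\mathcal D^*T\bbP^1$ descend to a line bundle on $X$, canonically the normal bundle $N_\sigma$ of the section, and $d\mathcal D$ is a map of line bundles whose vanishing divisor is precisely the branch divisor $D$. I would then make two identifications. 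First, $d\mathcal D$ is a nonzero section of $N_\sigma\otimes K_X$ with zero divisor $D$, so $\mathcal D^*T\bbP^1\cong T_X(D)$ and $d\mathcal D$ becomes multiplication by the tautological section $s_D\in H^0(\mathcal O_X(D))$. Second, $s_D$ is injective with cokernel the length-$k$ skyscraper sheaf $\mathcal Q:=T_X(D)|_D$ supported on $D$, so $K^\bullet$ is quasi-isomorphic to $\mathcal Q[-1]$ and
\[
\mathbb H^0(K^\bullet)=0,\qquad \mathbb H^1(K^\bullet)\cong H^0(\mathcal Q)\cong\C^k,\qquad \mathbb H^2(K^\bullet)\cong H^1(\mathcal Q)=0 .
\]
The vanishing $\mathbb H^0=0$ reflects the absence of infinitesimal automorphisms (trivial centralizer), while $\mathbb H^1\cong\C^k$ exhibits the tangent space as first-order movements of the branch points, matching the intuition that the branch divisor furnishes local coordinates. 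With this in hand I would conclude by the standard theory \`a la Horikawa/Ran: the obstruction space $\mathbb H^2(K^\bullet)$ vanishes, hence the deformation functor of $(X,\mathcal D)$ is unobstructed and its Kuranishi space is smooth of dimension $\dim_\C\mathbb H^1(K^\bullet)=k$, yielding the complex-manifold structure and the dimension simultaneously.

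The main obstacle, and the step deserving the most care, is not the cohomology count but the verification that $\mathcal{M}_{k,\rho}$ is \emph{genuinely} locally modeled on this Kuranishi space: one must build an honest local family of pairs $(X,\mathcal D)$ realizing the classes in $\mathbb H^1$, justify that the infinitesimal and formal theory is computed by $K^\bullet$ (including the equivariant descent and the normal-bundle identification), and — most delicately — ensure everything persists when $D$ is non-reduced, i.e. when branch points collide or acquire higher order. The virtue of $K^\bullet$ is precisely that the length of $\mathcal Q$ equals $\deg D=k$ and $H^1(\mathcal Q)=0$ independently of the multiplicities of $D$, so smoothness and dimension survive across the strata where branch points merge; making this uniformity rigorous, rather than establishing it only over the open locus of $k$ distinct simple branch points, is where the real work lies.
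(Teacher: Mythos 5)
Your proposal is correct in substance, but it follows a genuinely different route from the one the paper relies on (the theorem is quoted from \cite{CDF}, whose construction the paper recalls in the Appendix and in the proof of Lemma \ref{l:tangent bundle}). The paper's argument is constructive: near a structure $\sigma_0$, each branch point of order $n+1$ is replaced by the normalized polynomial covering $P_a(z)=z^{n+1}+a_nz^{n}+\cdots+a_1z-(a_1+\cdots+a_n)$ of equation \eqref{eq:coordinates}, glued back by a boundary surgery; the parameters $a$ over all branch points give $k$ coordinates modeled on a product of Hurwitz spaces, injectivity, separation and holomorphicity of transitions being checked via the universal property of holomorphic deformations (Lemmas \ref{l:chart}, \ref{l:separation}, \ref{l:holomorphic maps}). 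This chart construction is uniform across the strata where branch points collide, since the Hurwitz model already contains every splitting of a multiple branch point. Your route -- the complex $K^\bullet\colon T_X\to\mathcal D^*T\bbP^1$, the identification $\mathcal D^*T\bbP^1\cong T_X(D)$, and the quasi-isomorphism $K^\bullet\simeq\mathcal Q[-1]$ -- is a clean infinitesimal account: it gives the tangent space $H^0(T_X(D)|_D)\cong\C^k$ and kills obstructions for free, and it recovers exactly the content of the paper's Lemma \ref{l:tangent bundle} (your $\mathcal Q$ is the fibre of $(\Pi_{|B})_*(N_{\mathcal{G}|B})$, and your identification of tangent vectors with movements of branch values is the paper's relation $\partial/\partial a_i=(z^i-1)\,\partial/\partial v$). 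What your argument does not yet deliver, and what you rightly flag as the real work, is the passage from the Kuranishi statement to an actual manifold structure on $\mathcal M_{k,\rho}$: Horikawa's theorems concern maps from a \emph{compact} source to a \emph{fixed} target, whereas here the source is $\widetilde X$ with a $\pi_1(S)$-action (equivalently, a section of a flat bundle whose holomorphic structure varies with $X$), so the equivariant theory must be set up, and one must then prove effectivity and completeness -- that every structure close to $\sigma_0$ in the cut-and-paste topology of $\mathcal M_{k,\rho}$ occurs, exactly once, in the constructed family. The most economical way to supply that missing family is precisely the paper's surgery charts, whose Hurwitz parameters realize a basis of your $H^0(\mathcal Q)$; in that sense the two arguments are complementary rather than redundant. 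One small correction: $\mathbb H^0(K^\bullet)=0$ is not where trivial centralizer enters -- it holds because $d\mathcal D$ is an injective sheaf map (and indeed $H^0(T_X)=0$ already for $g\geq 2$); trivial centralizer is what makes the $\rho$-equivariant developing map unique, i.e.\ what identifies $\mathcal M_{k,\rho}$ with your moduli of pairs $(X,\mathcal D)$ in the first place.
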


The complex structure on $\mathcal M_{k,\rho}$ comes from the deformation theory of projective structures. Namely, assume that $Y$ is a complex analytic space and that we have the following data :
\begin{enumerate}
\item[a)]  a holomorphic submersion $\Pi : Z\rightarrow Y$ having as fibers simply connected Riemann surfaces,
\item[b)] a free proper discontinuous action of $\Gamma = \pi_1(S)$ on $Z$ preserving the fibration $\Pi$,
\item[c)] a holomorphic map $\mathcal D : Z\rightarrow \mathbb P^1$ which is $\rho$-equivariant with respect to the $\Gamma$-action, and which is a local diffeomorphism on each fiber apart from $k$ orbits counted with multiplicity.
\end{enumerate}
Then, identifying the fibers with the universal covering of $S$ in an equivariant way, we get a map from $Y$ to $\mathcal M_{k,\rho}$ which is holomorphic. In fact, $\mathcal M_{k,\rho}$ is the universal complex space having properties (a)-(c). The fact that this latter is a smooth complex manifold comes from the fact that $\mathcal M_{k,\rho} $ is modelled on Hurwitz spaces, as is shown in the appendix of \cite{CDF}. A particular system of charts showing this will be detailed  in the proof of Lemma \ref{l:tangent bundle}, Section \ref{s:the tangent bundle}.

Extending the ideas of the proof we can glue all the manifolds in Theorem \ref{t:BPS} sharing the genus and the integer $k\geq 0$ to form a complex foliated manifold:
\begin{theorem}\label{t:isomonodromy foliation}
If $S$ has genus $g\geq 2$ and $k\geq 0$, then the space $\mathcal{M}_k$  admits a complex structure compatible with those of Theorem \ref{t:BPS} and for which the monodromy map (\ref{eq:monodromy of bps}) is a holomorphic submersion.
\end{theorem}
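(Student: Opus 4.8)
The plan is to construct the complex structure on $\mathcal{M}_k$ by exhibiting, near each point, a concrete local model and then verifying that these models glue holomorphically into a foliated structure whose leaves are the fibers $\mathcal{M}_{k,\rho}$. Concretely, fix a branched projective structure $\sigma_0\in\mathcal{M}_k$ with equivariance $\rho_0$, realized by a developing map $\mathcal{D}_0:\widetilde{\curvename}_0\to\bbP^1$ on some complex structure $\curvename_0$. The key observation is that all the data in conditions (a)--(c) of the excerpt vary over a product-type base: one factor records the deformation of the representation $\rho$ in the character variety $\text{Hom}(\pi_1(S),\text{PSL}_2(\C))//\text{PSL}_2(\C)$, and the complementary factor is precisely the fiber direction governed by Theorem \ref{t:BPS}. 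First I would build a universal family $\Pi:Z\to Y$ of the type described, where $Y$ is a neighborhood of $[\sigma_0]$, by simultaneously deforming the representation and the equivariant holomorphic map $\mathcal{D}$, keeping the total branching degree $k$ fixed.

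The main step is to patch together the complex manifold structures supplied by Theorem \ref{t:BPS} on each fiber $\mathcal{M}_{k,\rho}$ as $\rho$ varies, so that the transverse direction (the character variety) enters holomorphically. The cleanest route is the universal-property formulation already stated after Theorem \ref{t:BPS}: a holomorphic map from any complex space $Y$ equipped with data (a)--(c) into $\mathcal{M}_{k,\rho}$. I would upgrade this to a \emph{relative} version in which $\rho$ is allowed to vary holomorphically in families. That is, I would show that if $Y$ carries a holomorphic family of representations $\rho_y$ (a holomorphic map $Y\to\text{Hom}(\pi_1(S),\text{PSL}_2(\C))//\text{PSL}_2(\C)$) together with a fiberwise developing map having $k$ branch points, then the induced classifying map $Y\to\mathcal{M}_k$ is holomorphic. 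Applying this to the tautological family over $\mathcal{M}_k$ itself yields a complex structure, and applying it to the character variety with the trivial (constant-fiber) developing data shows the monodromy map is holomorphic. The submersion property then follows because the Hurwitz-space charts of Theorem \ref{t:BPS}, detailed in the proof of Lemma \ref{l:tangent bundle}, split the tangent space: the branch-point positions move freely within a fiber while the representation moves freely transversally, so the differential of (\ref{eq:monodromy of bps}) is surjective with kernel exactly the fiber tangent space.

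The hard part will be establishing holomorphicity of the gluing in the transverse direction, i.e.\ that the complex structures of Theorem \ref{t:BPS} on neighboring fibers are compatible as $\rho$ deforms. The subtlety is that the identification of each fiber of $\Pi$ with the universal cover $\widetilde{\curvenametop}$ is made \emph{equivariantly}, and this identification depends on $\rho$; one must verify that the resulting family of developing maps depends holomorphically on the transverse parameter and not merely smoothly. I expect this to reduce to the statement that the Hurwitz-space model from the appendix of \cite{CDF} can be taken to depend holomorphically on $\rho$ — intuitively, the branch points and local monodromy data of the developing map vary holomorphically with the representation because the flat $\bbP^1$-bundle $\curvename\times_\rho\bbP^1$ and its Riccati foliation do. I would make this precise by working with the Riccati-foliation/ruled-surface picture recalled in the introduction: a point of $\mathcal{M}_k$ is a triple $(P,\mathcal{F},\sigma)$ and deforming $\rho$ deforms $(P,\mathcal{F})$ holomorphically, while deforming the section $\sigma$ (with $k$ tangencies) is the fiber direction. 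Once holomorphic dependence of the charts on both the fiber and transverse parameters is secured, the compatibility with Theorem \ref{t:BPS} and the submersion property are formal consequences of the Hurwitz-space local model.
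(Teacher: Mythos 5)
Your overall architecture coincides with the paper's: local charts of product type (a neighbourhood $U$ of $[\rho_0]$ in the character variety times Hurwitz-space factors attached to the branch points), after which the monodromy map is the projection onto $U$ and the submersion property is immediate. But the two steps that carry the real weight are missing or would fail as stated. First, your construction of the transverse direction is either circular or too naive. Applying a ``relative universal property'' to the tautological family over $\mathcal{M}_k$ cannot produce the complex structure, because the universal property presupposes a complex structure on $\mathcal{M}_k$ in order to even be stated for $Y=\mathcal{M}_k$; it can only be used, as the paper uses it, to check that transition maps between already-constructed charts are holomorphic. What must actually be proved (the paper's Lemma \ref{l:chart}) is that for every $\rho\in U$ there exists a branched projective structure with monodromy exactly $\rho$ and the prescribed branching data, fitting into a holomorphic family over $U\times\mathcal{H}_V$. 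Your Riccati-picture sketch --- deform $(P,\mathcal{F})$ holomorphically with $\rho$ and treat the section $\sigma$ as the fibre direction --- does not work over a fixed curve: already for $k=2g-2$, the fibres of $\mathcal{M}_{2g-2,\rho}\rightarrow\Teich$ are at most two points or a rational curve (Lemma \ref{l:fibers over Teich}), and since $\dim\mathcal{M}_{2g-2,\rho}=2g-2<3g-3$ a generic nearby $\rho$ admits \emph{no} structure with $2g-2$ branch points on the fixed curve $X_0$; the underlying complex structure must move with $\rho$. The paper resolves this with a genuinely non-obvious device: a merely \emph{smooth} section of the flat family over $U\times S$, built with a partition of unity, holomorphic only near the branch points and transverse to the Riccati foliation elsewhere, after which the transversely holomorphic structure of the foliation is used to re-induce a new complex structure on $U\times S$, so that the curve varies with $\rho$ while the tangency divisor stays put. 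Nothing in your proposal plays this role, and this is exactly the point where ``holomorphic dependence on $\rho$'' has to be created rather than expected.

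Second, you never address separation. The topology on $\mathcal{M}_k$ is the one generated by the chart images, and moduli spaces glued from local deformation charts can perfectly well fail to be Hausdorff; without Hausdorffness there is no complex manifold structure to speak of. The paper devotes Lemma \ref{l:separation} to this, with a non-formal argument: two non-separated points are shown to have conjugate monodromies and the same underlying complex structure, and then the Schwarzian-derivative quadratic differentials of their developing maps are shown to coincide by a limit argument along a sequence converging to both points. A smaller omission of the same kind: holomorphicity of the transition maps is checked in the paper only on the generic stratum (where the identification ``position of the critical value'' gives holomorphic coordinates on the Hurwitz factor) and is then propagated across collisions of branch points by continuity and the Riemann extension theorem (Lemma \ref{l:holomorphic maps}); your plan is silent on how non-generic strata are handled.
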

The regular holomorphic foliation induced by this monodromy map will be referred to as the isomonodromy foliation on $\mathcal{M}_k$. Since the proof  of Theorem \ref{t:isomonodromy foliation} does not shed much light on that of Theorem \ref{t:localdiffeo}  we leave it for the  Appendix of this paper.

\section{Dictionary between Systems and Rational curves in $\mathcal{M}_{2g-2,\rho}$}
\label{s:rational curves correspond to systems}

Let $S$ be an orientable compact surface of genus $g\geq 2$ and $X\in\Teich$. Take a  system
$(X,A)\in\SYS^{\text{irr}}$  having monodromy $\rho_A:\fundgroup (\curvenametop)\rightarrow\text{SL}_2(\mathbb{C})$. It induces on the trivial bundle the linear connection $\nabla = \mathrm{d} + A$, whose flat sections satisfy  \eqref{eq:linear system}. Its projectivization gives a flat connection $\nabla'$ on the trivial $\mathbb P^1$-bundle $X\times \mathbb P^1$.
{
The foliation $\mathcal F$ induced by $\nabla'$ is Riccati, hence transversely projective.
}
For every $p\in \mathbb P^1$, it induces a branched projective structure $\sigma_A(p)$ on the horizontal $X\times p$ (notice that no horizontal section is flat, since otherwise the system would be reducible). The foliation $\mathcal F$ has been considered by Drach in another context
(see \cite{Drach}). If $B : \widetilde{X} \rightarrow \text{SL} (2,\mathbb C)$ denotes the fundamental matrix of \eqref{eq:linear system} whose value at a point $x_0$ is the identity -- it satisfies that solutions to \eqref{eq:linear system} takes values $Z$ at $x_0$ and $B(x) Z$ at $x$ -- a developing map for the structure $\sigma_A(p)$ is defined as
\begin{equation} \label{eq:variation developing maps} \mathcal D _ p (x )  = B(x)^{-1} (p) . \end{equation}
This latter is $\rho$-equivariant, where $\rho$ is the composition of $\rho_A$ with the natural projection $\text{SL}(2,\mathbb C) \rightarrow \text{PSL} (2,\mathbb C)$, and where we use the natural identification of $X\times p$ with $X$. In particular, the equivariance of $\sigma_A(p)$ is the conjugacy class of $\rho$. Moreover, the critical points of  $\sigma_A(p)$ are the tangency points between $X_p=X\times p$ and $\mathcal{F}$, with the same multiplicities; by the formula~\cite[Proposition 2, p. 26]{Br}
\begin{equation} \label{eq:number of tangencies} | \text{Tang}(\mathcal{F},X_p)| =N_{\mathcal F} \cdot X_p-\chi(X_p)= X_p^2 - \chi (X_p) = 2g-2, \end{equation}
where points in $\text{Tang}(\mathcal F, X_p)$ are counted with multiplicities. Thus, we have built a well-defined map
\begin{equation}\label{DefSigmaA} \sigma_A:\bbP^1\rightarrow \mathcal{M}_{2g-2,\rho} ; \quad \sigma_A(p):=[\sigma(A,p)]. \end{equation}
Now, equation  \eqref{eq:variation developing maps} implies that $\sigma_A$ is holomorphic  with respect to the complex structure given by Theorem \ref{t:BPS} (remark that the irreducibility of $\rho_A$ implies that $\rho$ has trivial centralizer).

\begin{lemma}\label{l:rational curve}
The map  $\sigma_A:\bbP^1\rightarrow \mathcal{M}_{2g-2,\rho}$ defined in \eqref{DefSigmaA}  is an embedding.
\end{lemma}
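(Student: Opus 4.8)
The plan is to show that $\sigma_A$ is both injective and an immersion, so that (since $\bbP^1$ is compact and $\mathcal{M}_{2g-2,\rho}$ is a manifold by Theorem~\ref{t:BPS}) it is an embedding. The key observation is that all the branched projective structures $\sigma_A(p)$, for $p \in \bbP^1$, share the \emph{same} underlying complex structure $X$ and the same equivariance $\rho$: they are induced by the various horizontal sections $X \times p$ of the single flat $\bbP^1$-bundle $X \times \bbP^1$, equivalently by the developing maps $\mathcal D_p(x) = B(x)^{-1}(p)$ of \eqref{eq:variation developing maps}. Injectivity should therefore reduce to the observation, already noted in the excerpt, that since $\rho$ has trivial centralizer, two distinct non-$\mathcal F$-invariant sections of $X \times_\rho \bbP^1$ define \emph{inequivalent} branched projective structures. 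Concretely, if $\sigma_A(p) = \sigma_A(p')$ in $\mathcal{M}_{2g-2,\rho}$, then by the uniqueness of the developing map with equivariance $\rho$ (trivial centralizer) the two developing maps must agree, i.e. $B(x)^{-1}(p) = B(x)^{-1}(p')$ for all $x$; since $B(x) \in \SL(\C)$ acts as a M\"obius transformation, this forces $p = p'$.

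Next I would establish that $\sigma_A$ is an immersion, i.e. that its differential never vanishes. The most transparent route is to use the deformation-theoretic description of the complex structure on $\mathcal{M}_{2g-2,\rho}$ recalled after Theorem~\ref{t:BPS}: the data $(Z, \Pi, \mathcal D)$ with $Z = X \times \bbP^1$ (fibered over $Y = \bbP^1$ via the second projection), the $\rho$-equivariant map $\mathcal D(x,p) = B(x)^{-1}(p)$, and the $\pi_1(S)$-action, realize $\sigma_A$ as the classifying map of a family of projective structures. The derivative of $\sigma_A$ at $p$ is then read off from the variation $\partial_p \mathcal D_p$ of the developing map, which is a nonzero holomorphic vector field along the developing map transverse to the fibers; the point is that the family $p \mapsto X \times p$ of horizontal sections genuinely moves, because $\partial_p (B(x)^{-1}(p)) \not\equiv 0$ as $B(x)^{-1}$ is a nondegenerate M\"obius transformation for each $x$. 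Thus the tangent vector $\sigma_A'(p) \in T_{\sigma_A(p)}\mathcal{M}_{2g-2,\rho}$ is represented by this nontrivial first-order variation and is nonzero, giving injectivity of the differential.

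The main obstacle I expect is being careful about \emph{where} the branch points sit and ensuring the derivative computation is valid at \emph{every} $p$, including those $p$ for which the tangency locus $\mathrm{Tang}(\mathcal F, X_p)$ is nongeneric (e.g. has branch points of higher multiplicity). The count \eqref{eq:number of tangencies} guarantees the total branching degree is constantly $2g-2$, so $\sigma_A$ does map into the single manifold $\mathcal{M}_{2g-2,\rho}$ throughout; but to compute $d\sigma_A$ intrinsically one must match the first-order variation of $\mathcal D_p$ against the Hurwitz-type charts on $\mathcal{M}_{2g-2,\rho}$ (the charts alluded to in the discussion following Theorem~\ref{t:BPS} and detailed later for Lemma~\ref{l:tangent bundle}). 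I would therefore phrase the immersion statement as: the map $p \mapsto \mathcal D_p$ lands in the universal space of Theorem~\ref{t:BPS} satisfying (a)--(c), and its classifying map $\sigma_A$ has nonvanishing derivative precisely because the family $\mathcal D_p$ is nonconstant modulo post-composition by $\mathrm{PSL}_2(\C)$ — which follows again from the trivial-centralizer uniqueness, since a constant value of $\sigma_A$ near $p$ would force $\mathcal D_p = A_p \circ \mathcal D_{p_0}$ for a path $A_p$ in $\mathrm{PSL}_2(\C)$, contradicting the explicit form $\mathcal D_p(x) = B(x)^{-1}(p)$. Combining injectivity, the immersion property, and compactness of the domain then yields that $\sigma_A$ is an embedding.
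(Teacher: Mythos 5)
Your injectivity argument is correct and is essentially the paper's: uniqueness of the $\rho$-equivariant developing map (trivial centralizer) plus the explicit formula \eqref{eq:variation developing maps} pins down $p$ from the class $\sigma_A(p)$ (the paper evaluates at the base point $x_0$, where $B(x_0)=\mathrm{id}$, so $\mathcal D_{\sigma_A(p)}(x_0)=p$).

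The immersion step, however, has a genuine gap. A tangent vector to $\mathcal{M}_{2g-2,\rho}$ at $\sigma_A(p_0)$ is a first-order variation of the pair (complex structure, developing map) \emph{modulo trivial deformations}. Since your family keeps the complex structure $X$ fixed and $X$ carries no nonzero holomorphic vector fields ($g\geq 2$), the kernel of the map \{variations of $\mathcal D$\} $\to T_{\sigma_A(p_0)}\mathcal{M}_{2g-2,\rho}$ consists exactly of the variations $\xi\circ\mathcal D_{p_0}$, where $\xi\in\sli(\C)$ acts as a quadratic vector field on $\bbP^1$ (infinitesimal post-composition, which moves $\rho$ only inside its conjugacy class). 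Your inference ``$\partial_p\bigl(B(x)^{-1}(p)\bigr)\not\equiv 0$, hence $d\sigma_A(p_0)\neq 0$'' is therefore invalid: the trivial variations $\xi\circ\mathcal D_{p_0}$ are themselves pointwise nonvanishing away from the (at most two) zeros of $\xi$, so pointwise nonvanishing of $\partial_p\mathcal D_p$ does not rule out that this variation is trivial in moduli. What must be shown is that $\partial_p\mathcal D_p\big|_{p_0}$ is \emph{not} of the form $\xi\circ\mathcal D_{p_0}$ for any $\xi$, and this genuinely uses irreducibility (unwinding the identity $B(x)\xi B(x)^{-1}(p_0)\equiv v$ leads, e.g. in the normal form \eqref{normalfA}, to $\beta$ and $\gamma$ being proportional, i.e. reducibility). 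Your fallback argument does not repair this: ruling out that $\sigma_A$ is \emph{locally constant} modulo $\mathrm{PSL}_2(\C)$ is a zeroth-order statement, and a nonconstant injective holomorphic map can still fail to be an immersion (the cusp $t\mapsto(t^2,t^3)$ is injective with vanishing derivative at $0$), so injectivity plus nonconstancy does not give an embedding. The paper avoids this entire issue with a cleaner device: choosing a local holomorphic section $\sigma\mapsto x(\sigma)$ of the universal fibration $\pi:Z\to\mathcal{M}_{2g-2,\rho}$ through the basepoint $x_0$, the evaluation $f(\sigma)=\mathcal D(x(\sigma))$ of the universal developing map is holomorphic and satisfies $f\circ\sigma_A=\mathrm{id}$ near $p_0$; this exhibits a local left inverse, so $d\sigma_A$ is injective everywhere with no computation. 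You would need either this trick or the explicit verification above to complete your proof.
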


\begin{proof}
First remark that the irreducibility of $\rho_A$ implies that for every $\sigma\in \mathcal M_{2g-2,\rho}$, there is a unique $\rho$-equivariant developing map $\mathcal D_\sigma$. In the case of $\sigma_A(p)$, this developing map is given by the formula \eqref{eq:variation developing maps}, and we have
$\mathcal D_{\sigma_A(p)} (x_0) = p$. This shows that $\sigma_A$ is injective.

To see that it is an immersion, recall that we can glue all the developing maps $\mathcal D_\sigma $ for $\sigma \in \mathcal M_{2g-2,\rho}$ in the following data: a fibration $\pi : Z \rightarrow \mathcal M_{2g-2,\rho}$ with simply connected Riemann surfaces as fibers, a properly discontinuous action of $\pi_1(S) $ on $Z$ preserving the fibration, and a $\rho$-equivariant map $\mathcal D : Z \rightarrow \mathbb P^1$  which induces on each fiber $\pi^{-1} (\sigma)$ the developing map $\mathcal D_\sigma$. Notice that the fibration $\pi$ is the pull-back of the universal cover of the universal curve over Teichm\"uller space by the natural map $\mathcal M_{2g-2,\rho} \rightarrow \Teich$. In particular, for any $p_0\in \mathbb P^1$, we can choose a germ of section $\sigma \in \mathcal M_{2g-2,\rho}\rightarrow x(\sigma) \in Z$ of $\pi$ defined at the neighborhood of $\sigma_A(p_0)$, taking every $\sigma_A(p)$ (for $p$ close to  $p_0$) to the point $x_0\in \pi^{-1} (\sigma_A(p)) \simeq \widetilde{X}$. For $\sigma$ in a neighborhood of $\sigma_A(p_0)$, we have a well-defined holomorphic function $f (\sigma ) = \mathcal D ( x (\sigma) ) $. By construction we have
$$f \circ \sigma_A (p) = \mathcal D(x(\sigma_A(p))) = \mathcal D _{ \sigma_A(p)} (x_0) = p,$$
for $p$ close to $p_0$, so $\sigma_A$ is an immersion.
  \end{proof}

The following result is not necessary for the proof of Theorem~\ref{t:localdiffeo}. However it completes the proof of the dictionary between systems and rational curves in the isomonodromic moduli spaces $\mathcal M_{2g-2,\rho}$. An infinitesimal version of it will be used in Section \ref{r:counter-example} to provide counter-examples to Theorem \ref{t:localdiffeo} in genus $\geq 3$.
{
\begin{lemma}\label{l:fibers over Teich}
  If a fiber of the  natural (holomorphic) forgetful map $\mathcal{M}_{2g-2,\rho}\rightarrow\Teich$ contains more than two points, it is precisely one of the rational curves of Lemma \ref{l:rational curve}.
\end{lemma}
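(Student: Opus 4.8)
The plan is to translate the fiber of the forgetful map $\mathcal{M}_{2g-2,\rho}\to\Teich$ over a fixed $X$ into bundle-theoretic data on $X$, and then exploit a rigidity of degree-zero line subbundles. Concretely, I would consider the flat $\mathbb{P}^1$-bundle $P=\widetilde{X}\times_\rho\mathbb{P}^1\to X$ with its Riccati foliation $\mathcal{F}$. Since $\rho$ has trivial centralizer, a point of the fiber over $X$ is the same datum as a holomorphic section of $P$, namely the one carried by the unique $\rho$-equivariant developing map; its branch points are exactly the tangencies of the section with $\mathcal{F}$. By the tangency formula \eqref{eq:number of tangencies}, a section $\sigma$ has $\sigma^2-\chi(X)=\sigma^2+2g-2$ tangencies, so having exactly $2g-2$ branch points is equivalent to $\sigma^2=0$. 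Thus the fiber over $X$ is in bijection with the set of sections $\sigma$ of $P\to X$ with self-intersection zero.

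The key rigidity is that any two distinct sections $\sigma_1,\sigma_2$ with $\sigma_1^2=\sigma_2^2=0$ are disjoint. Indeed, writing $\mathrm{NS}(P)=\mathbb{Z}[\sigma_0]\oplus\mathbb{Z}[f]$ for a reference section $\sigma_0$ and a fiber $f$ (so $f^2=0$, $f\cdot\sigma_0=1$), each section satisfies $[\sigma_i]=[\sigma_0]+m_i f$; the condition $\sigma_i^2=0$ forces $m_1=m_2$, whence $\sigma_1\cdot\sigma_2=\sigma_1^2=0$, and being distinct irreducible curves they cannot meet. Next I would lift $\rho$ to an $\text{SL}_2(\mathbb{C})$-representation — possible since $\mathcal{M}_{2g-2,\rho}\neq\emptyset$ with $2g-2$ even forces such a lift, cf. the footnote to Theorem \ref{t:BPS} — and take the associated flat rank-two bundle $E\to X$, so that $P=\mathbb{P}E$ and $\deg E=0$. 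Under this identification, sections with $\sigma^2=0$ correspond to degree-zero line subbundles $M\subset E$, and two such sections are disjoint precisely when the subbundles are complementary, i.e. $E=M_1\oplus M_2$.

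Now suppose the fiber contains three distinct points, giving degree-zero subbundles $M_1,M_2,M_3$ that are pairwise disjoint by the above. From $E=M_1\oplus M_2$ and the disjointness of $M_3$ from both $M_1$ and $M_2$, the two projections $M_3\to M_1$ and $M_3\to M_2$ are isomorphisms, so $M_1\cong M_2\cong M_3=:L$ and $E\cong L\otimes\mathbb{C}^2$ with $L^2=\det E=\mathcal{O}_X$. Then $\mathbb{P}E\cong X\times\mathbb{P}^1$ is the trivial ruled surface, its degree-zero subbundles are exactly the $L\otimes\ell$ for $\ell\in\mathbb{P}(\mathbb{C}^2)=\mathbb{P}^1$, and the whole fiber is this $\mathbb{P}^1$. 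Note that this step uses no stability hypothesis on $E$: the disjointness is pure intersection theory and the direct-sum conclusion is a two-line linear-algebra argument.

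Finally I would recognize this $\mathbb{P}^1$ as a curve of Lemma \ref{l:rational curve}. Twisting $(E,\nabla)$ by the flat line bundle $L^{-1}$ — whose monodromy is the order-two character $\epsilon$ determined by $L^2=\mathcal{O}_X$ — produces a flat connection on the trivial bundle $\mathcal{O}_X^2$, that is, a genuine system $(X,A)$, without altering the projectivized foliation or the $\text{PSL}_2(\mathbb{C})$-monodromy $\rho$. Its associated embedding $\sigma_A:\mathbb{P}^1\to\mathcal{M}_{2g-2,\rho}$ parametrizes exactly the sections $X\times p$, hence exactly the fiber, which is therefore precisely the rational curve $\sigma_A(\mathbb{P}^1)$. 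I expect the main difficulty to be concentrated in this last step: in the case $L\neq\mathcal{O}_X$ the bundle $E$ is itself not trivial although $\mathbb{P}E$ is, so one must twist by a $2$-torsion flat line bundle to exhibit a bona fide system carrying the correct $\text{PSL}_2(\mathbb{C})$-monodromy; the remaining steps are intersection theory and the elementary rigidity of complementary line subbundles.
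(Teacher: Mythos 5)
Your proof is correct, and it follows the paper's overall skeleton --- identify the fiber over $X$ with the set of self-intersection-zero sections of $X\times_\rho\mathbb{P}^1$ via the tangency formula \eqref{eq:number of tangencies}, show that three such sections force the bundle to be trivial with the zero-sections exactly the horizontals, and recognize the resulting $\mathbb{P}^1$ as $\sigma_A(\mathbb{P}^1)$ --- but the two middle steps are carried out by genuinely different means. For disjointness, the paper lifts $\rho$ to $\mathrm{SL}_2(\mathbb{C})$, notes that the bundle is then diffeomorphic to a product, and deduces that zero-sections are homologous to a horizontal, hence pairwise disjoint; your computation in $\mathrm{NS}(P)$ reaches the same conclusion by pure intersection theory, without invoking the lift at that stage. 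For triviality, the paper stays at the level of the $\mathbb{P}^1$-bundle: three pairwise disjoint sections trivialize it holomorphically (the cross-ratio map), and a Riccati foliation on $X\times\mathbb{P}^1$ is automatically the projectivization of an honest $\sli$-system, so no further bundle-theoretic work is needed. You instead descend to the rank-two bundle $E$, prove $E\cong L\oplus L$ with $L$ two-torsion, and then must repair the discrepancy between $E$ and the trivial bundle by twisting by the flat order-two character $\epsilon$ --- a subtlety your route creates but also correctly resolves, and which incidentally makes explicit the bundle-theoretic picture (a split, strictly semi-stable bundle with two-torsion factors) that the paper only develops later, in Lemma \ref{lem:classificationbundles}. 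One shared implicit point: both your final step and the paper's assertion that \emph{every} horizontal defines a point of $\mathcal{M}_{2g-2,\rho}$ require that no horizontal is invariant by the Riccati foliation, which holds because $\rho$ is irreducible in the standing context of this section (for a reducible $\rho$ with trivial centralizer the flat horizontal would have to be excluded, and the fiber would be an affine line rather than a rational curve); you are at the same level of rigor as the paper on this point.
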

\begin{proof}
Fix  $X\in\Teich$. Any point in $\mathcal{M}_{2g-2,\rho}$ that projects to $X$ can be thought of as a section of $X\times_{\rho}\mathbb{P}^1\rightarrow X$ of zero self-intersection, thanks to formula \eqref{eq:number of tangencies}.

Suppose there are more than two points in $\mathcal{M}_{2g-2,\rho}$ over $X$. We deduce that there exist three  sections of $X\times_{\rho}\mathbb{P}^1$ of zero self-intersection. Since $\rho$ can be lifted to $\text{SL}_2(\mathbb{C})$ the bundle $X \times _{\rho} \mathbb P^1 \rightarrow X$ is diffeomorphic to a product (see for instance \cite{Goldman}). Under this identification, each section is homologous to a horizontal curve, which implies that the three chosen sections do not intersect pairwise. With those at hand we can trivialize the $\mathbb{P}^1$-bundle biholomorphically  to $X\times\mathbb{P}^1$ taking each section to a horizontal. Now, every horizontal section $X\times p$ with $p\in\mathbb{P}^1$ defines an element in $\mathcal{M}_{2g-2,\rho}$, and there are no other sections of zero self-intersection in $X\times\mathbb{P}^1$. Therefore, the fibre of $\mathcal{M}_{2g-2,\rho}\rightarrow\Teich$ over $X$ is precisely one of the rational curves obtained in Lemma \ref{l:rational curve}.
\end{proof}

As a consequence of Lemma \ref{l:fibers over Teich} we get the following description of parametrized curves in $\mathcal{M}_{2g-2,\rho}$
\begin{corollary} \label{l:rational curves} Let $\rho:\fundgroup (\curvenametop)\rightarrow\mathrm{PSL}_2(\mathbb{C})$ be an irreducible representation that lifts to $\mathrm{SL}(2,\mathbb C)$.\footnote{Otherwise the space $\mathcal M_{2g-2,\rho}$ is empty, see \cite{CDF}.}
  Given a compact connected curve $C$ and a non constant holomorphic map $R:C\rightarrow\mathcal{M}_{2g-2,\rho}$, there exists $(X,A)\in\systems$ and a meromorphic map $R_0:C\rightarrow\bbP^1$ such that $$R=\sigma_A\circ R_0.$$ In particular the image of $R$ is an embedded rational curve.
\end{corollary}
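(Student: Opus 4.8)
The plan is to reduce the statement about an arbitrary non-constant holomorphic map $R:C\rightarrow\mathcal{M}_{2g-2,\rho}$ to the structure of the fibers of the forgetful map $\mathcal{M}_{2g-2,\rho}\rightarrow\Teich$ obtained in Lemma~\ref{l:fibers over Teich}. First I would compose $R$ with the forgetful map to get a holomorphic map $C\rightarrow\Teich$. The key point is that $\Teich$ is Kobayashi hyperbolic (it carries the Teichm\"uller metric, with respect to which it is complete hyperbolic), while $C$ is a compact curve; hence any holomorphic map from $C$ to $\Teich$ is constant. This forces the image of $R$ to lie entirely inside a single fiber of $\mathcal{M}_{2g-2,\rho}\rightarrow\Teich$, over some fixed $X\in\Teich$.

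Next I would invoke Lemma~\ref{l:fibers over Teich} to identify that fiber. Since $R$ is non-constant, the fiber containing its image contains at least two points (in fact infinitely many, being the image of a positive-dimensional curve), so by Lemma~\ref{l:fibers over Teich} this fiber is precisely one of the rational curves $\sigma_A(\bbP^1)$ produced in Lemma~\ref{l:rational curve}, for some system $(X,A)\in\systems$ with monodromy $\rho$. In particular the image of $R$ is the embedded rational curve $\sigma_A(\bbP^1)$, which establishes the final assertion of the corollary.

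It then remains to factor $R$ through $\sigma_A$. By Lemma~\ref{l:rational curve} the map $\sigma_A:\bbP^1\rightarrow\mathcal{M}_{2g-2,\rho}$ is an embedding onto its image, so it admits a holomorphic inverse $\sigma_A^{-1}$ defined on that image. I would then set $R_0:=\sigma_A^{-1}\circ R:C\rightarrow\bbP^1$. Because $R$ is holomorphic with values in the image of the embedding $\sigma_A$, the composite $R_0$ is holomorphic, hence a non-constant holomorphic map from the compact curve $C$ to $\bbP^1$, which is exactly a (non-constant) meromorphic function on $C$. By construction $\sigma_A\circ R_0=\sigma_A\circ\sigma_A^{-1}\circ R=R$, giving the desired factorization.

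The main obstacle I anticipate is the very first step: justifying that the induced map $C\rightarrow\Teich$ is constant. This is where the compactness of $C$ and the hyperbolicity of Teichm\"uller space are essential, and one must be a little careful that the relevant map is genuinely valued in $\Teich$ (rather than some quotient or partial compactification) so that the hyperbolicity argument applies verbatim. Everything afterwards is formal, relying only on the embedding property from Lemma~\ref{l:rational curve} and the fiber description from Lemma~\ref{l:fibers over Teich}; the only mild care needed is to note that the image of a non-constant map necessarily meets the hypothesis ``more than two points'' of Lemma~\ref{l:fibers over Teich}.
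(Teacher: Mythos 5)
Your overall route is exactly the paper's: compose $R$ with the forgetful map to $\Teich$, argue this composition is constant, invoke Lemma \ref{l:fibers over Teich} to identify the fiber as one of the rational curves $\sigma_A(\bbP^1)$, and then factor $R=\sigma_A\circ R_0$ using that $\sigma_A$ is an embedding (Lemma \ref{l:rational curve}). The factorization step and the appeal to the fiber description are fine. But the justification you give for the crucial first step is genuinely wrong. Kobayashi hyperbolicity of the target --- even complete hyperbolicity --- does \emph{not} imply that holomorphic maps from compact curves into it are constant: a compact Riemann surface of genus at least $2$ is itself complete Kobayashi hyperbolic, and the identity map from it to itself is certainly non-constant. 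The principle ``compact source $+$ hyperbolic target $\Rightarrow$ constant'' is valid only when the Kobayashi pseudodistance of the source vanishes (e.g.\ $C=\bbP^1$ or an elliptic curve). Since the corollary allows $C$ to be an \emph{arbitrary} compact connected curve, your argument breaks down precisely for sources of genus $\geq 2$, which is the generic case.

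The correct justification --- and the one the paper uses --- is that $\Teich$ is biholomorphic to a \emph{bounded} domain (of holomorphy) in $\C^{3g-3}$, via the Bers embedding. Pulling back the coordinate functions of this embedding along $C\rightarrow\Teich$ gives bounded holomorphic functions on the compact connected curve $C$, which are constant by the maximum principle; hence the composition is constant. (Equivalently: a bounded domain, being Stein, contains no positive-dimensional compact analytic subsets, whereas a hyperbolic manifold may well contain them.) With this repair, the remainder of your argument goes through and coincides with the paper's proof, which after establishing constancy simply notes that $R(C)$, being connected and not a point, must be one of the rational curves of Lemma \ref{l:fibers over Teich}, and concludes.
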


\begin{proof}
First remark that the composition of $R$ with the  natural (holomorphic) forgetful map $\mathcal{M}_{2,\rho}\rightarrow\Teich$ must be some constant $X\in\Teich$, since $\Teich$ has the structure of a bounded domain of holomorphy. Therefore $R(C)$ is contained in a fiber of $\mathcal{M}_{2g-2,\rho}\rightarrow\Teich$. Since $R(C)$ is connected and not a point it must be one of the rational curves of Lemma \ref{l:fibers over Teich}. The result follows.
\end{proof}

}

From Lemmas \ref{l:rational curve} and \ref{l:rational curves}  we deduce that an isomonodromic deformation of systems of monodromy $\rho$ corresponds to a non-trivial deformation of rational curves in $\mathcal{M}_{2g-2,\rho}$.

\section{The tangent bundle to $\mathcal{M}_{k,\rho}$}
\label{s:the tangent bundle}

In this section we fix  $k>0$ and a representation $\rho:\fundgroup (\curvenametop)\rightarrow \text{PSL}_2(\mathbb{C})$ with trivial centralizer such that  $\mathcal{M}_{k,\rho}$ is  non-empty. Whenever there is no risk of confusion, we note $$\mathcal{M}=\mathcal{M}_{k,\rho}.$$ We will show that $T\mathcal{M}$ can be thought of as a push-forward of a line bundle.

Let $\Pi: \mathcal{C}\rightarrow\mathcal{M}$ denote the universal curve bundle over $\mathcal{M}$, that is, the fibre of $\Pi$ over $\sigma\in\mathcal{M}$ is the Riemann surface $X$ underlying $\sigma$. By the results of \cite{CDF}, there exists a transversely projective holomorphic foliation $\mathcal{G}$ of codimension one on $\mathcal{C}$ that induces on each fibre $\Pi^{-1}(\sigma)$ the projective structure $\sigma$.\footnote{With the notation used just after Theorem \ref{t:BPS}, $\mathcal G$ is the quotient of the foliation whose leaves are the level sets of $\mathcal D$ on the fibration $Z\rightarrow \mathcal M_{k,\rho}$ by the action of $\pi_1(S)$.} The points of tangency between $\mathcal{G}$ and a fiber of $\Pi^{-1}(\sigma)$ correspond to the points where $\sigma$ has critical points. Let $B=\text{Tang}(\mathcal{G},\Pi)$ denote the tangency divisor between $\mathcal{G}$ and the fibration $\Pi$. By construction, $\Pi_{|B}:B\rightarrow \mathcal{M}$ is a branched cover of degree $k$  and $\mathcal{G}$ is generically transverse to $B$.

Around a regular value $\sigma\in\mathcal{M}$ of $\Pi_{|B}$ we can define $k$ regular transverse one dimensional holomorphic foliations, namely the push forward of the restriction of $\mathcal{G}$ to $B$ around each of the $k$  points in the fiber $\Pi^{-1}_{|B}(\sigma)$. With the help of these foliations we will analyze the sheaf of sections of $T\mathcal{M}$. Let us formalize this argument.

We know that a line bundle over the source space of a branched covering of degree $k$ can be pushed forward to a rank $k$-vector bundle over the target space of the branched cover. Let  $N_{\mathcal{G}|B}$ denote the restriction of the normal bundle of  $\mathcal{F}$ to $B$. Its push forward $$E:=(\Pi_{|B})_*(N_{\mathcal{G}|B})$$ by $\Pi_{|B}$ is a rank $k$ vector bundle over $\mathcal{M}$.

\begin{lemma}\label{l:tangent bundle} With the above notations, we have $$E\simeq T\mathcal{M}.$$ \label{l:pushforward}\end{lemma}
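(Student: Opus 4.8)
The plan is to construct an explicit isomorphism $E \to T\mathcal{M}$ by identifying both sheaves with deformation data of the branched projective structure, and then checking that this map is fiberwise an isomorphism. The conceptual content is that a tangent vector to $\mathcal{M}$ at $\sigma$ is an infinitesimal deformation of the branched projective structure fixing the monodromy $\rho$, and such a deformation is recorded by how the $k$ branch points move together with a first-order normal displacement of the leaf of $\mathcal{G}$ through each branch point. Since $N_{\mathcal{G}|B}$ is exactly the normal bundle of the foliation restricted to the tangency divisor $B$, a section of its push-forward $E$ over $\sigma$ is precisely a choice, at each of the $k$ points of $\Pi^{-1}_{|B}(\sigma)$, of a normal direction to $\mathcal{G}$ — which is the infinitesimal datum of moving that branch point transversally to the leaves. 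The goal is to match this with the Hurwitz-space description of the charts on $\mathcal{M}$ promised after Theorem~\ref{t:BPS}.

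First I would make the Hurwitz charts explicit. Near a point $\sigma$ that is a regular value of $\Pi_{|B}$, the $k$ tangency points are distinct, and the local model for $\mathcal{D}$ restricted to the fiber near each branch point $b$ is $w \mapsto w^{2}$ (simple branching) in a projective coordinate; the deformation theory of \cite{CDF} identifies a neighborhood of $\sigma$ with a Hurwitz space parametrizing the positions of these critical values, so that $T_\sigma \mathcal{M}$ is canonically $\bigoplus_{b} (\text{infinitesimal motion of the } b\text{-th branch point})$. On the other side, the push-forward $E_\sigma = \bigoplus_{b} (N_{\mathcal{G}|B})_b$ decomposes over the same $k$ points. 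The isomorphism I would write sends a normal vector $v_b \in (N_{\mathcal{G}|B})_b$ to the infinitesimal deformation of $\sigma$ that displaces the $b$-th branch point in the direction $v_b$ transverse to $\mathcal{G}$, holding the leaf structure (hence the monodromy) fixed. Because the foliation is transversely projective and generically transverse to $B$, this assignment is well defined and holomorphic in $\sigma$, so it defines a sheaf morphism $E \to T\mathcal{M}$.

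The next step is to verify this morphism is an isomorphism. Away from the branch locus of $\Pi_{|B}$ it is an isomorphism fiber by fiber by the Hurwitz-chart identification, since both sides are rank $k$ and the map is visibly bijective on the summands. The map is then a morphism of locally free sheaves of the same rank $k$ (both $E$ and $T\mathcal{M}$ are vector bundles of rank $k = \dim \mathcal{M}$ by Theorem~\ref{t:BPS}) which is an isomorphism on the dense open set of regular values; I would conclude it is an isomorphism everywhere by showing its determinant is a nowhere-vanishing section, i.e. that the identification extends across the ramification locus of $\Pi_{|B}$. This is where one uses that push-forward of a line bundle under a degree-$k$ branched cover interacts correctly with ramification: the same local coordinate computation that produces the Hurwitz chart at a higher-order branch point (where several tangency points collide) matches the local structure of the push-forward sheaf, so no zeros of the determinant are introduced.

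The main obstacle I expect is precisely this behavior at the ramification points of $\Pi_{|B}$, i.e. the values $\sigma$ where tangency points collide (branch points of $\sigma$ of higher multiplicity, or where $\mathcal{G}$ fails to be transverse to $B$). There the naive ``one normal direction per branch point'' decomposition of $E_\sigma$ degenerates, and I must check that the push-forward sheaf structure still matches the deformation-theoretic tangent space; the careful local normal form of the developing map (a branching of order $n_p$) and its unfolding in the Hurwitz space is what controls both sides, and reconciling the two is the delicate computational heart of the argument. A clean way to package this would be to define the morphism intrinsically, without reference to regular values, via the relative developing map $\mathcal{D}$ on $\mathcal{C}$ and its vertical derivative, so that holomorphy and the isomorphism property across $B$'s ramification become a single local computation rather than an extension-across-codimension-one argument.
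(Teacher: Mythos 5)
Your overall strategy is the same as the paper's: use the Hurwitz charts of \cite{CDF} to identify, at a regular value of $\Pi_{|B}$, the coordinate vector fields $\partial/\partial a_i$ with sections of $N_{\mathcal{G}|B}$ concentrated at the corresponding tangency points, then extend across the locus where tangency points collide. The gap is that you never carry out the step you yourself call the ``delicate computational heart.'' Two things are missing. First, your morphism $E\to T\mathcal{M}$ is only defined at regular values of $\Pi_{|B}$ (one normal direction per simple branch point); a bundle map defined off a hypersurface need not extend holomorphically, so before you can speak of ``its determinant'' near the ramification locus you must actually produce the extension, i.e.\ exhibit the correspondence in local coordinates at a collision point. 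Second, even granting the extension, the non-vanishing of the determinant there is precisely the assertion that collisions introduce no degeneration, which is the whole content of the lemma at those points; saying that ``the same local computation matches the push-forward structure'' restates the claim rather than proving it.

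The paper fills this in with two concrete ingredients absent from your proposal. At a tangency point of order $n+1$ one has the explicit polynomial unfolding $P_a(z)=z^{n+1}+a_nz^n+\cdots+a_1z-(a_1+\cdots+a_n)$ of \eqref{eq:coordinates}; on $B=\{\partial P_a/\partial z=0\}$, with the first integral $v=P_a(z)$ of $\mathcal{G}$, differentiation gives $\mathrm{d}v=\sum_i\bigl(c(a)^i-1\bigr)\mathrm{d}a_i$, so the assignment $\sum_i f_i(a)\,\partial/\partial a_i\mapsto\bigl(\sum_i f_i(a)(z^i-1)\bigr)\partial/\partial v$ is given by a formula valid on the whole chart, collision points included. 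Note that it goes in the direction $T\mathcal{M}\to E$, which sidesteps your extension problem entirely. The second ingredient is the Weierstrass Division Theorem, applied to the Weierstrass polynomial $\frac{1}{n+1}\partial P_a/\partial z$: it shows that every germ of holomorphic function on $B$ at the order-$n$ point is written \emph{uniquely} as $\sum_{i=1}^n f_i(a)(z^i-1)$ as in \eqref{eq:weierstrass}, and this unique representation is exactly injectivity plus surjectivity of the displayed map on germs of sections, i.e.\ the sheaf isomorphism at ramification points. Your fallback suggestion --- defining the map intrinsically via the vertical derivative of the relative developing map --- is the right instinct (the first integral $v$ is the local developing map), but without the division argument, or something equivalent to it, the isomorphism property at collisions remains unproved.
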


\begin{proof}
  From the appendix in \cite{CDF} we know how to describe local charts around a point in $\sigma\in\mathcal{M}_{k,\rho}$ for any $\rho$ with trivial centralizer and $k$ such that $\mathcal{M}_{k,\rho}\neq \emptyset$. Let us recall the construction. For each point $q$ in the support of $\text{div}(\sigma)$, consider a chart of the projective structure around it defined on a disc
  $D_q\subset \curvenametop$, and fix a point $Q\in\partial D_q$ and its image in $\mathbb{P}^1$. Let $n_q+1=n+1$ be the order of the projective structure around $q$ and consider a conformal equivalence where the chart becomes the map $z\mapsto z^{n+1}$ defined on $\mathbb{D}$, $q=0$ and $Q=1$. Consider $a=(a_1,\ldots,a_n)\in(\mathbb{C}^{n},0)$  and the polynomial in the variable $z$ \begin{equation}\label{eq:coordinates} P_a(z)=z^{n+1}+a_nz^{n}+\ldots+a_1z-(a_n+\ldots+a_1).\end{equation}  Then $P_a(1)=1$ and its critical values lie on $\mathbb{D}$. $P_0$ corresponds to the normalized covering associated to $\sigma$ at $q$. Let $\mathbb{D}_a:=P_a^{-1}(\mathbb{D})$.  We can topologically glue the covering $P_{a|\mathbb{D}_a}:\mathbb{D}_a\rightarrow\mathbb{D}$ to the projective structure induced by $\sigma$ on the complement of $D_q$ by gluing the boundaries with the help of the equivalence between the values of $P_0$ and $P_a$. By doing such a surgery independently at each critical point of $\sigma$ we obtain a projective structure for each choice of $n_q$ parameters at each $q$. Thus, the family of all such projective structures depends on $k=\sum n_q$ parameters. With the choices involved, it is a chart in $\mathcal{M}_{k,\rho}$. In this chart, the tangent space to $\mathcal{M}_{k,\rho}$ is just the tangent space to the cartesian product of the parameters $(a_1,\ldots,a_{n_q})$ at each critical point $q$. With these coordinates we can define local coordinates of $\mathcal{C}$ around a point $q\in \Pi_{B}^{-1}(\sigma)$, where $\sigma$ has a critical point of order $n=n_q$. Indeed, a neighbourhood corresponds to a neighbourhood of $0$ in the coordinates $(a_1,a_2,\ldots,a_n,z)=(a,z)\in\mathbb{C}^{n+1}$, the projection $\Pi$ restricted to the neighbourhood is just the map $(a,z)\mapsto a$, the set $B$ corresponds to the smooth analytic set $$B\simeq \left\{(a,z)\in\mathbb{C}^{n+1}~\middle|~\frac{\partial P_a}{\partial z}(z)=(n+1)z^{n}+na_nz^{n-1}+\ldots+a_1=0\right\}$$ and the foliation $\mathcal{G}$ has a first integral $v=P(a,z):=P_a(z)$ in the neighbourhood of $0$.
  Let $\mathcal{G}_{|B}$ be the foliation restricted to $B$. Recall that local sections of the normal bundle to a codimension one foliation can be interpreted as tangent vectors to the values of a first integral of the foliation.
  The equivalence in the statement of the lemma will follow from analyzing  the relation that $P_{|B}$ induces between the parameters of $\mathcal{M}_{k,\rho}$ and those of its values. Let $(a,c(a))\in B$ be one of the critical points of $P_a$ and $v=P_a(c(a))$ denote its critical value. Differentiation gives $$\mathrm{d}v=\sum_{i=1}^n(c(a)^i-1)\mathrm{d}a_i$$ and hence for $(a,z)\in B$ and $i=1,\ldots, n$ \begin{equation} \label{eq: isom}\frac{\partial}{\partial{a_i}}= (z^i-1)\frac{\partial}{\partial v}.\end{equation}

Recall that if $\sigma\in\mathcal{M}_{k,\rho}$ is a regular value of  $\Pi_{|B}$, the local holomorphic sections of the bundle  $E$ around a point $\sigma$  are given by direct sums of local sections of the normal bundle to the foliation at each of the preimages of $\sigma$ by $\Pi_{|B}$. By the definition of the charts $(a_1,\ldots a_k)$ of $\mathcal{M}_{k,\rho}$, at such a $\sigma$ each coordinate function  corresponds to a different point of the preimage and a coordinate of type $P_{a_i}(z)=z^2+a_iz-a_i$ around it. Thus, each $\frac{\partial}{\partial a_i}$ corresponds to the section of the normal bundle to the foliation that is zero around all preimages, except for the chart corresponding to the $a_i$ variable where the equivalence (\ref{eq: isom}) takes place (with $n=1$ since the branch point is simple).

To see that this equivalence defines an isomorphism of the sheaves of sections of $T\mathcal{M}$ and of $E$ we first remark that any germ of holomorphic function $f$ on $B$ at the point $q=(0,0)\in B$ of order $n$ can be uniquely written as a sum \begin{equation} \label{eq:weierstrass}\sum_{i=1}^n f_i(a_1,\ldots, a_n)(z^i-1)\quad \text{ for each }(a,z)\in B\end{equation} where each $f_i$ is holomorphic in $(a_1,\ldots,a_n)\in(\mathbb{C}^n,0)$. This follows from the Weierstrass Division Theorem applied to any holomorphic extension $F$ of $f$ to a neighbourhood of $q$ in $\mathcal{C}$ and the Weierstrass polynomial $W:=\frac{1}{n+1}\frac{\partial P_a}{\partial z}(z)$. Indeed, there exist holomorphic functions $\tilde{f}_i(a_1,\ldots,a_n)$ such that $$F(a,z)=\sum_{i=0}^{n} \tilde{f}_i(a_1,\ldots,a_n) z^i + W(a,z) Q(a,z)$$ for some holomorphic $Q\in\mathcal{O}_{(\mathbb{C}^{n+1},0)}$. Defining $f_i=\tilde{f}_i$ for $i>0$ and $f_0=\tilde{f}_0+\sum_{i=1}^{n} \tilde{f}_i$ and restricting to $B=\{W=0\}$ we get the desired expression (\ref{eq:weierstrass}) for the germ  $f:(B,q)\rightarrow\mathbb{C}$.

With this at hand we deduce that, outside the branching divisor of $\Pi_{|B}$, the correspondence $\frac{\partial}{\partial a_i}\mapsto (z^i-1)\frac{\partial}{\partial v_i}$ defines the desired isomorphism.

Here we just treat the case where the critical fibers contain a single point, but the general case follows from the same type of argument. In the critical fibers we have a unique  point $q\in B$ of order $n=k$ and we can consider a chart given by a polynomial as in (\ref{eq:coordinates}). We need to extend the isomorphism to a neighbourhood of this point. The correspondence is already given by (\ref{eq:   isom}). To a holomorphic vector field $\sum f_i(a_1,\ldots,a_n)\frac{\partial}{\partial a_i}$ around $\sigma=(0,\ldots, 0)$ we associate $$\sum_{i=1}^n f_i(a_1,\ldots,a_n)\left((z^i-1)\frac{\partial}{\partial v}\right)$$ around the unique point $q=(0,0)$ in the fiber of $\Pi_{|B}$ over $\sigma$. This last expression corresponds uniquely to the vector field $f(a,z)\frac{\partial}{\partial v}$ where $f$ is defined by (\ref{eq:weierstrass}) in a neighbourhood of $q$ in $B$. Since every holomorphic function on $B$ can be uniquely written in this way, the equivalence is a sheaf isomorphism.
\end{proof}

\section{Rigidity of rational curves in $\mathcal{M}_{2,\rho}$} \label{s:rigidity}
Suppose $\mathcal{R}$ is a smooth embedded rational curve in $\mathcal{M}=\mathcal{M}_{k,\rho}$. Then the bundle $T\mathcal{M}_{|\mathcal{R}}$ splits as a direct sum of $k$ line bundles  over $\mathcal{R}$: $$T\mathcal{M}_{|\mathcal{R}} = \bigoplus_{i=1}^k \mathcal{O}(d_i) .$$ Among the factors, there is one coming from  $\Pi_*(\Pi^*(T\mathcal{R}))$ that implies without loss of generality that $d_1$ can be assumed to be $2$. The rigidity of the curve $\mathcal{R}$ in $\mathcal{M}_{k,\rho}$ is equivalent to proving that  $d_i<0$ for every $i\geq 2$. 

The Chern class of the determinant bundle $\wedge^2 T\mathcal{M}_{|\mathcal{R}}$ is the sum $2+(d_2+\cdots+d_k)$. In particular, when  $k=2$, rigidity is equivalent to showing that this sum  is at most $1$. In fact the $d_2$ corresponds to the self-intersection of the curve $\mathcal{R}$ in this case.
The problem is settled in the following statement.

\begin{proposition}\label{p:negative selfintersection} Every smooth rational curve in $\mathcal{M}_{2,\rho}$ has  self-inter\-section $-4$.
\end{proposition}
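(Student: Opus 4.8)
The plan is to reduce the statement to a degree computation for the bundle $T\mathcal{M}_{|\mathcal{R}}$, where $\mathcal{M}=\mathcal{M}_{2,\rho}$. As recalled at the beginning of Section~\ref{s:rigidity}, on a smooth rational curve $\mathcal{R}$ one has $T\mathcal{M}_{|\mathcal{R}}=\mathcal{O}(2)\oplus\mathcal{O}(d_2)$ with $d_2=\mathcal{R}^2$, so it is enough to show that $\deg(T\mathcal{M}_{|\mathcal{R}})=-2$. By Corollary~\ref{l:rational curves}, $\mathcal{R}$ is one of the curves $\sigma_A(\mathbb{P}^1)$ associated with a system $(X,A)$, and by Lemma~\ref{l:fibers over Teich} it lies in a single fibre of the forgetful map $\mathcal{M}\to\Teich$. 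Consequently the restriction $\mathcal{C}_{\mathcal{R}}:=\Pi^{-1}(\mathcal{R})$ of the universal curve is the product $X\times\mathbb{P}^1$, and by formula~\eqref{eq:variation developing maps} the foliation $\mathcal{G}$ restricts on $\mathcal{C}_{\mathcal{R}}$ to the Riccati foliation $\mathcal{F}$ obtained by projectivizing $\nabla=\mathrm{d}+A$.

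I would then feed this into Lemma~\ref{l:tangent bundle}. Since $T\mathcal{M}=(\Pi_{|B})_*N_{\mathcal{G}|B}$ and $\Pi_{|B}$ is finite flat, base change along $\mathcal{R}\hookrightarrow\mathcal{M}$ gives $T\mathcal{M}_{|\mathcal{R}}=(\Pi_{|B_{\mathcal{R}}})_*\big(N_{\mathcal{G}|B}\vert_{B_{\mathcal{R}}}\big)$, where $B_{\mathcal{R}}:=B\cap\mathcal{C}_{\mathcal{R}}$ is the tangency curve $\mathrm{Tang}(\mathcal{F},X\times\{p\})$ and $\Pi_{|B_{\mathcal{R}}}:B_{\mathcal{R}}\to\mathbb{P}^1$ has degree $2g-2=2$. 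The first input I need is the geometry of $B_{\mathcal{R}}$: in the fibre coordinate $p$ it is cut out in $X\times\mathbb{P}^1$ by the quadratic form $\gamma p^2-2\alpha p-\beta$, whose coefficients are the holomorphic one-forms appearing in $A$. This realizes $B_{\mathcal{R}}$ as a Cartier divisor of class $2F+2H$, where $F=[\{x\}\times\mathbb{P}^1]$ and $H=[X\times\{p\}]$, and adjunction gives arithmetic genus $p_a(B_{\mathcal{R}})=5$. The second input is the line bundle being pushed forward: as $\mathcal{F}$ is a regular Riccati foliation, transverse to every fibre of $X\times\mathbb{P}^1\to X$, its normal bundle has class $N_{\mathcal{F}}=2H$ (compatibly with $N_{\mathcal{F}}\cdot X_p=X_p^2=0$ in \eqref{eq:number of tangencies}), so that $N_{\mathcal{F}}\cdot B_{\mathcal{R}}=2H\cdot(2F+2H)=4$.

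Granting the identification of the pushed line bundle with $N_{\mathcal{F}}\vert_{B_{\mathcal{R}}}$ discussed below, Riemann--Roch for the finite flat degree-two map $\Pi_{|B_{\mathcal{R}}}$ closes the argument: comparing $\chi$ on $B_{\mathcal{R}}$ and on $\mathbb{P}^1$ yields
\[
\deg(T\mathcal{M}_{|\mathcal{R}})=N_{\mathcal{F}}\cdot B_{\mathcal{R}}+\chi(\mathcal{O}_{B_{\mathcal{R}}})-2=4+(1-5)-2=-2,
\]
and hence $\mathcal{R}^2=d_2=-4$.

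The step I expect to be the main obstacle is precisely the identification $N_{\mathcal{G}|B}\vert_{B_{\mathcal{R}}}\cong N_{\mathcal{F}}\vert_{B_{\mathcal{R}}}$, i.e. checking that restricting the normal bundle of the codimension-one foliation $\mathcal{G}$ on the threefold $\mathcal{C}$ to the surface $\mathcal{C}_{\mathcal{R}}$ produces exactly the normal bundle of the induced Riccati foliation. In general one only gets $N_{\mathcal{G}}\vert_{\mathcal{C}_{\mathcal{R}}}\cong N_{\mathcal{F}}\otimes\mathcal{O}(D)$, where $D$ is the divisor along which $\mathcal{C}_{\mathcal{R}}$ fails to be transverse to $\mathcal{G}$, so the crux is to show $D=0$. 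For this I would use that the level sets of the developing map cut out $\mathcal{G}$, and that along $\mathcal{C}_{\mathcal{R}}$ this developing map is $\mathcal{D}_p(x)=B(x)^{-1}(p)$, a M\"obius transformation in the parameter $p$; hence $\partial_p\mathcal{D}_p$ never vanishes and $\mathcal{C}_{\mathcal{R}}$ is everywhere transverse to $\mathcal{G}$, forcing $D=0$. A minor point to address is that $B_{\mathcal{R}}$ need not be reduced or smooth, but being a Cartier divisor on a smooth surface it is a local complete intersection curve, so the adjunction value of $p_a$ and the Riemann--Roch computation above remain valid as stated.
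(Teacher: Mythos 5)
Your proof is correct, and its skeleton matches the paper's: both reduce the statement to showing $\deg T\mathcal{M}_{|\mathcal{R}}=-2$, identify $\Pi^{-1}(\mathcal{R})$ with $X\times\mathbb{P}^1$ carrying the Riccati foliation $\mathcal{F}$, and then compute, via Lemma~\ref{l:tangent bundle}, the degree of the push-forward of $N_{\mathcal{F}}$ restricted to the tangency (eigen)curve under the degree-two projection to $\mathbb{P}^1$. Where you genuinely diverge is in how that degree is computed. The paper proves an ad hoc determinant formula for degree-two covers (Lemma~\ref{l:groethendieck_riemann_roch}) and then counts the ramification of $\Pi_{2|C}:C\to\mathbb{P}^1$ by a genus-two-specific argument (Lemma~\ref{l:nongeneric horizontal fibers}): the twelve branch points arise from the six one-forms with a double zero at the Weierstrass points, pulled back through the degree-two map $\mathbb{P}^1\to\mathbb{P}(\Omega^1(X))$. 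You instead read off the divisor class $B_{\mathcal{R}}\sim 2F+2H$ from the quadratic form $\gamma p^2-2\alpha p-\beta$, obtain $p_a(B_{\mathcal{R}})=5$ by adjunction and $N_{\mathcal{F}}\cdot B_{\mathcal{R}}=4$ by intersection theory, and conclude by comparing Euler characteristics across the finite flat push-forward. The two computations are equivalent --- Riemann--Hurwitz for $\Pi_{2|C}$ converts your $\chi(\mathcal{O}_{B_{\mathcal{R}}})=-4$ into the paper's count of twelve simple branch points --- but yours trades the hyperelliptic enumeration for adjunction, and has the merit of being insensitive to whether $B_{\mathcal{R}}$ is smooth or even reduced (in fact irreducibility of $A$ forces $B_{\mathcal{R}}$ to be reduced: if the quadratic were a square, $\det(A)$ would be a square, contradicting Proposition~\ref{PropReduSystem}). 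Your explicit verification that $\mathcal{C}_{\mathcal{R}}=X\times\mathbb{P}^1$ is everywhere transverse to $\mathcal{G}$, so that $N_{\mathcal{G}|B}$ restricts to $N_{\mathcal{F}}$ with no correction divisor, is also a welcome addition: the paper asserts this identification ``by construction'' without further comment.
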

\begin{proof}
   By  Lemma \ref{l:pushforward}, we have $c_1(\wedge^2 T\mathcal{M}_{|\mathcal{R}})=c_1(\wedge^2 E_{|\mathcal{R}})$. The latter can be calculated using the following relation between the Chern class of a line bundle and that of the determinant of the push forward by a branched covering.

\begin{lemma}\label{l:groethendieck_riemann_roch} If $L\rightarrow C$ is a line bundle over a smooth curve $C$ and $f:C\rightarrow C'$ is a branched cover of degree $2$, then $$c_1(\wedge^2 f_* L)=c_1(L)-\frac{1}{2}\sum (e_p-1)$$ where $e_p$ denotes the degree of $f$ around $p\in C$.\end{lemma}

Note that  the number $\sum(e_p-1)$ is always even.

In our case, we take $C=\Pi^{-1}(\mathcal{R})\cap B$, $L=N_{\mathcal{G}|C}$, $C'=\mathcal{R}$ and $f=\Pi_{|C}$. The curve $C$ is the projectivization of the set of eigenvectors of $A$ in the product $X\times \mathbb P^1$, so we will call it the eigencurve in the sequel. By Corollary \ref{l:rational curves},  the rational curve $\mathcal{R}$ can be parametrized by some embedding $\sigma_A:\bbP^1\rightarrow \mathcal{M}$ which comes from a regular holomorphic foliation $\mathcal{F}$ on some $X\times\bbP^1$ that is transverse to the $\bbP^1$-fibers. In fact, by construction, the restriction of $\Pi$ and $\mathcal{G}$ to  $\Pi^{-1}(\mathcal{R})$ are naturally equivalent to the fibration $\Pi_2:X\times\bbP^1\rightarrow \bbP^1$ endowed with the foliation $\mathcal{F}$. Under this equivalence, $C=\text{Tang}(\mathcal{F},\Pi_2)$ and $$L=N_{\mathcal{F}|C}\simeq N(\Pi_2)_{|C}\simeq T(\Pi_{1})_{|C}\simeq \Pi_{1|C}^{*}(T(\bbP^1))$$ where $\Pi_1 :X\times\bbP^1\rightarrow X$ is the canonical projection. Therefore, since $\Pi_{1|C}$ has degree two, we have $c_1(L)=4$.

On the other hand, we need to calculate the degree of the branching divisor of $f=\Pi_{2|C}:C\rightarrow\bbP^1$.  Since the branched covering has degree two, the only possibility is that the branching points are simple. Thus it suffices to count the number of fibers of type $X_p=X\times p$ such that $\mathcal{F}$ has a unique point of tangency with  $X_p$ of multiplicity two.
\begin{lemma}\label{l:nongeneric horizontal fibers}
  There are twelve points $p_1,\ldots, p_{12}\in\bbP^1$ counted with multiplicity such that $X_{p_i}$ and $\mathcal{F}$ have a unique point of tangency of multiplicity two.
\end{lemma}
\begin{proof}
  The Riccati equation $\mathcal{F}$ on $X\times\bbP^1$ can be written as $dy=\alpha y^2+\beta y+\gamma$ where $\alpha,\beta, \gamma\in\oneforms(X)$ are holomorphic  1-forms. Since $\text{dim}_{\mathbb{C}}(\oneforms(X))=2$ we obtain a holomorphic map $\bbP^1\rightarrow \mathbb{P}(\oneforms(X))\simeq \bbP^1$ defined by $y\mapsto [\alpha y^2+\beta y+\gamma]$. It has at most degree two. If its degree is smaller than two, by using homogeneous coordinates in $\mathbb P^1$, we see that there is an invariant horizontal $X\times y$ for $\mathcal{F}$, which does not occur since our system $(X,A)$ is irreducible (see also \eqref{normalfA}). Hence the map is a degree two ramified covering. The fibers $p_i$ of the statement correspond to forms in $\oneforms(X)$ having a single zero. Recall that the hyperelliptic involution in $X$ fixes precisely six points and that for each of them there corresponds a unique element in $\mathbb{P}(\oneforms(X))$ having a single zero at the given fixed point of the involution. In fact these are the only one-forms in $\mathbb{P}(\oneforms(X))$ with a single zero. Hence after the degree two branched covering there are 12 fibers with a single zero.

\end{proof}

By using Lemma \ref{l:groethendieck_riemann_roch}, we get $$c_1(\wedge^2 T\mathcal{M}_{|\mathcal{R}})=4-\frac{12}{2}=-2.$$ By the argument at the beginning of the section, we have $2+d_2=-2$ and therefore $d_2=-4$.
\end{proof}

\section{Second proof of Theorem \ref{t:localdiffeo}} \label{s:proof}

Let $(X,A)\in\SYS^{irr}$. If $\text{Mon}$ is locally injective at $(X,A)$, then it is a non-constant holomorphic map between spaces of the same dimension, therefore open around $(X,A)$. { Local injectivity and openness imply that $\text{Mon}$ is a local biholomorphism.}

We will use the correspondence between systems in $\SYS^{irr}$ with monodromy $\rho$ and embedded rational curves in $\mathcal{M}_{2,\rho}$ proved in Section \ref{s:rational curves correspond to systems} to see that the case where $\text{Mon}$ is not locally injective at $(X,A)$ cannot occur.

Suppose, to reach a contradiction,  that for each neighbourhood $U_{\varepsilon}$ of $(X,A)$ there exist distinct $(X_1,A^{\varepsilon}_1)$ and $(X_2,A^{\varepsilon}_2)$ satisfying $\text{Mon}(X_1,A^{\varepsilon}_1)=\text{Mon}(X_2,A^{\varepsilon}_2)=\rho_{\varepsilon}$.  { The translation of this data under the dictionary between systems and rational curves in spaces of branched projective structures (see  Lemma \ref{l:rational curve}) produces the following geometric situation: an embedded rational curve  $\sigma_A$ in the leaf $\mathcal{M}_{2,\rho}$ of the foliated complex manifold $\mathcal{M}_2$ and for each $\varepsilon>0$, a pair of disjoint embedded rational cuves $\sigma_{A^{\varepsilon}_1},\sigma_{A^{\varepsilon}_2}$ in $\mathcal{M}_{2,\rho_{\varepsilon}}\subset \mathcal{M}_2$ that lie in the tubular $\varepsilon$-neighbourhood $V_{\varepsilon}$ of the  rational curve $\sigma_A$ in the foliated complex manifold $\mathcal{M}_2$ (we use Theorem \ref{t:isomonodromy foliation} here). This implies that the distance between the rational curves $\sigma_{A^{\varepsilon}_1},\sigma_{A^{\varepsilon}_2}$ tends to zero with $\varepsilon$.
On the other hand, the foliated tubular structure is quite simple around $\sigma_A$. Indeed, since $\sigma_A$ is simply connected, the foliation around $\sigma_A$ is holomorphically a product $U_A\times\mathbb{D}^{6}$ where $U_A$ is a neighbourhood of $\sigma_A$ in $\mathcal{M}_{2,\rho}$. Using  Proposition \ref{p:negative selfintersection}, we can find a uniform $\delta>0$  so that in the $\delta$-neighbourhood of $\sigma_A\times z$ in $U_A\times z$  the only rational curve is $\sigma_A\times z$. This contradiction shows that $\text{Mon}$ is locally injective.

\section{Relation between the two proofs of Theorem \ref{t:localdiffeo}}\label{Sec:Comparaison}

The two proofs of Theorem \ref{t:localdiffeo} given so far can be ``compared'' in the moduli space $\CON$ of $\mathfrak{sl}_2$-connections on rank two holomorphic bundles over genus two curves.
Projectivization of the fibres of the bundles subjacent to the elements in $\CON$ gives a natural map
$$\mathbb P\ :\ \CON\rightarrow \PCON$$
from $\CON$ to the moduli space of flat $\mathbb{P}^1$-bundles over genus two curves.
Remark that each fibre of this compactification map contains at most one system. Indeed, suppose $f,g:\widetilde{C}\rightarrow\text{SL}_2(\mathbb{C})$ are equivariant maps (solutions of two systems) with respect to representations $\rho_f, \rho_g$ satisfying $\rho_g=m\cdot \rho_f$ for some homomorphism  $m:\pi_1(C)\rightarrow\{\pm\text{Id}\}$. The product $h:=f^{-1} g$ becomes $m$-equivariant.  By projectivising, $[h]$ defines a holomorphic map $C\rightarrow \text{PSL}_2(\mathbb{C})$, hence constant. We deduce that $h$ is constant and $m$ trivial. Moreover, we want to emphasize that the map $\CON\rightarrow \PCON$, which is naturally
a $16$-fold ramified cover (whose Galois group is the group of $2$-torsion points of $\mathrm{Jac}(X)$,
see \cite[Section 2.1.1]{HL}), is actually \'etale near $\SYS$. In order to understand isomonodromic deformations near
$\SYS$, it is equivalent to work in $\CON$ or in $\PCON$.

Elements of $\PCON$ are better viewed as equivalence classes of pairs $(P,\mathcal F)$ where $P\to X$ is a ruled surface
and $\mathcal F$ a Riccati foliation on $P$ (i.e. transverse to the ruling). A projective structure with $2g-2$ branch points
is defined by an element $(P,\mathcal F)$ of $\PCON$ together with the choice of a section $\sigma:X\to P$ having
self-intersection $0$ (see formula (\ref{eq:number of tangencies})). Thinking of $(P,\mathcal F)$ as the projectivization
of a $\mathfrak{sl}_2$-connection $(E,\nabla)$, such a section $\sigma$ corresponds to a line sub-bundle $L\subset E$
having degree $0$, implying therefore that the vector bundle $E$ is strictly semi-stable, i.e. semi-stable but not stable (see \cite{NaRam}).
Consider the subspace $\CONSSS\subset\CON$ of connections over strictly semi-stable bundles.
It is a singular hypersurface that contains $\SYS$ as a proper subset.
As we will see, $\SYS$ is in the singular set of $\CONSSS$. We claim that around a generic point of $\SYS$,
$\CONSSS$ is locally a product $V\times B$ where $V$ is a surface singularity of type $A_1$, $B$ is a ball of dimension $6$ and the isomonodromy foliation is given by the projection $V\times B\rightarrow B$. The points of $\SYS$ correspond to the singular locus of this product.

For notational symplicity, we will also denote the image of $\CONSSS$
and $\SYS$ in $\PCON$ as $\PCONSSS$ and $\PSYS$ respectively.
We construct a forgetful map
\begin{equation}\label{desing}
 \mathcal{M}_2\twoheadrightarrow\PCONSSS\subset \PCON
\end{equation}
that associates to each branched projective structure $(P,\mathcal F,\sigma)$, the class in the moduli space
$\PCON$ of the Riccati foliation $(P,\mathcal F)$. This makes sense at least locally, over a neighborhood
of $\PSYS$, or also globally, with  respect to the natural map $\Teich\to\Mod$ for the underlying curve.

By construction, leaves of the isomonodromic foliation on  $\mathcal{M}_2$ are sent to leaves of $\PCONSSS$
and any rational curve in $\mathcal{M}_2$ is contracted to some point in $\PSYS$ by the map (\ref{desing}).
We claim that this map is a desingularization map around generic points of $\PSYS$.  The next lemma is the key to prove this claim.

\begin{lemma}\label{lem:hyperelliptic involution on M2}
  The action of the hyperelliptic involution $h:X\to X$ on $\mathcal{M}_2$
  \begin{itemize}
    \item[(i)]  preserves each leaf of the isomonodromy foliation,
  \item[(ii)]  preserves each rational curve in a leaf fixing at most two points,
  \item[(iii)] does not have isolated fixed points.
  \end{itemize}
\end{lemma}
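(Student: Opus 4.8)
The plan is to derive all three statements from two inputs that are essentially external to the branched-projective-structure machinery: Goldman's theorem that the hyperelliptic involution acts trivially on the character variety (\cite{Goldman2}), and the classical fact that on a genus two surface the hyperelliptic involution is realized as a holomorphic automorphism of \emph{every} complex structure, hence acts trivially on $\Teich$. Throughout, $h$ acts holomorphically on $\mathcal{M}_2$ by pullback of branched projective structures; this action covers the action on the $\mathrm{PSL}_2(\C)$-character variety through the monodromy submersion of Theorem~\ref{t:isomonodromy foliation}, and covers the action on $\Teich$ through the forgetful map $\mathcal{M}_{2,\rho}\to\Teich$. For part (i), I would observe that the leaves of the isomonodromy foliation are exactly the fibers $\mathcal{M}_{2,\rho}$ of the monodromy submersion. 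The $h$-action sends a structure with equivariance $[\rho]$ to one with equivariance $[\rho\circ h_*]$; by Goldman's theorem $[\rho\circ h_*]=[\rho]$, so every fiber, hence every leaf, is preserved.

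For part (ii), since $h$ fixes every point of $\Teich$ and, by Lemma~\ref{l:fibers over Teich}, the rational curves in a leaf are precisely the fibers of the forgetful map $\mathcal{M}_{2,\rho}\to\Teich$ containing more than two points, $h$ must preserve each such rational curve $\sigma_A(\bbP^1)$. To identify the induced motion I would use the explicit lift of $h$ to an automorphism of the system furnished by the normal form \eqref{normalfA}: the hyperelliptic involution lifts to $(x,y,y_1,y_2)\mapsto(x,-y,iy_1,-iy_2)$, i.e. to the gauge matrix $\mathrm{diag}(i,-i)$. Projectivizing, this automorphism of the Riccati-foliated bundle $X\times\bbP^1$ preserves $\mathcal{F}$, covers $h$ on the base, and acts on each $\bbP^1$-fibre by the M\"obius transformation $p\mapsto -p$, thereby carrying the horizontal $X\times\{p\}$ to $X\times\{-p\}$. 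Read through $\sigma_A$, this says $h\cdot\sigma_A(p)=\sigma_A(-p)$, a nontrivial involution of $\bbP^1$ whose fixed set is exactly $\{0,\infty\}$; in particular $h$ fixes at most (indeed exactly) two points of the curve.

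For part (iii), the argument is differential and is where the force of Goldman's theorem enters. At a fixed point $\sigma\in\mathcal{M}_2$ the differential $\diff h$ is an involution of the $8$-dimensional space $T_\sigma\mathcal{M}_2$, and by Cartan's linearization the fixed locus is a smooth submanifold with tangent space $\ker(\diff h-\id)$. The monodromy map is $h$-equivariant and a submersion onto the $6$-dimensional character variety, on which $h$ acts as the identity (Goldman), so its differential acts as the identity there. Denoting by $q:T_\sigma\mathcal{M}_2\to T_{[\rho]}\Rep$ the differential of monodromy, this gives $q\circ(\diff h-\id)=0$, whence $\mathrm{image}(\diff h-\id)\subset\ker q=T_\sigma(\text{leaf})$, a space of dimension $2$. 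Therefore $\mathrm{rank}(\diff h-\id)\le 2$ and $\dim\ker(\diff h-\id)\ge 6$, so the fixed locus is positive-dimensional at every fixed point and $h$ has no isolated fixed points. One can sharpen this using the filtration by the rational curve inside the leaf: $\diff h=-\id$ along the curve, $\diff h=+\id$ on the complementary $T\Teich$-direction (as $h$ is trivial on $\Teich$) and on the $6$ transverse directions, exhibiting the fixed locus as a smooth hypersurface of dimension $7$.

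I expect the main obstacle to lie in the bookkeeping of part (ii): pinning down the precise lift of $h$ to the Riccati-foliated $\bbP^1$-bundle and verifying that its projectivization is $p\mapsto -p$ and that it intertwines $\sigma_A(p)$ with $\sigma_A(-p)$, rather than with some other M\"obius reparametrization. A secondary point to record carefully is that Goldman's statement is phrased for $\SL(\C)$, so I would note that the representations relevant to $\mathcal{M}_2$ lift to $\SL(\C)$ and that triviality on the character variety therefore descends to the $\mathrm{PSL}_2(\C)$-setting used here. Once these two points are secured, parts (i) and (iii) are formal consequences of equivariance together with Goldman's theorem.
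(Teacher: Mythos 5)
Your parts (i) and (ii) follow essentially the paper's own route. Part (i) is, as in the paper, a one-line citation of Goldman's theorem. For (ii), the paper likewise first reduces, via triviality of $h$ on $\Teich$ and Lemma \ref{l:fibers over Teich}, to showing each rational fiber is preserved; its main argument then rules out a pointwise-fixed rational curve by showing the coefficient one-forms $\alpha,\beta,\gamma$ of the Riccati equation would be $h$-invariant, hence descend to $X/h\cong\mathbb{P}^1$ and vanish, and it closes with exactly your computation: the normal form \eqref{normalfA} gives the lift $(x,y,z)\mapsto(x,-y,-z)$, whose only invariant horizontal sections are $z=0,\infty$. So in (ii) you have simply promoted the paper's closing remark to the whole argument, which is legitimate since Corollary \ref{l:rational curves} guarantees every rational curve is some $\sigma_A(\mathbb P^1)$ with $A$ in normal form. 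Part (iii) is where you genuinely diverge, and your argument is correct: the paper argues constructively, pushing an $h$-fixed structure down to a singular projective structure on $X/h\cong\mathbb{P}^1$ and moving branch points by Schiffer variations to produce an explicit isomonodromic path of $h$-fixed structures through the given point, whereas you combine Bochner--Cartan linearization with $h$-equivariance of the monodromy submersion and Goldman to get $\mathrm{image}(dh-\mathrm{id})\subseteq\ker(d\,\rh)$, hence $\dim\mathrm{Fix}(h)\ge 8-2=6>0$. Your proof is shorter and avoids all surgery, but note what the paper's construction buys: the Schiffer deformations are isomonodromic, so the paper proves the stronger fact that every fixed point is non-isolated \emph{within its leaf}. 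Your bound alone does not exclude, a priori, a fixed point where $dh=-\mathrm{id}$ on the two-dimensional leaf tangent space (then $\dim\mathrm{Fix}=6$ and $\mathrm{Fix}$ could meet that leaf in an isolated point); this within-leaf statement is precisely what Section \ref{Sec:Comparaison} exploits when asserting that each quotient $\mathcal{M}_{2,\rho}/h$ is a smooth surface containing a $(-2)$-curve. Your final sharpening to $\dim\mathrm{Fix}=7$ would recover it, but as written it only applies at fixed points lying on a rational curve, and fixed points need not lie on one. In short: a correct and more economical proof of the lemma as stated, at the cost of establishing less than the paper's proof does for later use.
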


\begin{proof}
(i) By \cite[Theorem 10.2]{Goldman2}, the action of the hyperelliptic involution is trivial
on the $\text{SL}_2(\mathbb{C})$-character variety on a genus two surface.

(ii) First remark that the hyperelliptic involution acts trivially on the Teichm\"uller space $\Teich$. Hence each fibre of the forgetful map $\mathcal{M}_{2,\rho}\rightarrow \Teich$ is preserved by the action of the hyperelliptic involution. By Lemma \ref{l:fibers over Teich} the fibers contain either one point, two points or a rational curve.

 Next we prove that the rational curves cannot be fixed pointwise.  Let $\mathcal{D}:\widetilde{X}\times\mathbb{P}^1\rightarrow \mathbb{P}^1$ be the holomorphic family of developing maps describing a rational curve in some $\mathcal{M}_{2,\rho}$ and $h:X\rightarrow X$ denote the hyperelliptic involution. Suppose that there are more than two fixed points of $h$ on the rational curve.  Then every point is fixed, and  the holomorphic family of developing maps $\widehat{\mathcal{D}}(x,p):=\mathcal{D}(\widetilde{h}(x), p)$ defines the same rational curve in $\mathcal{M}_{2,\rho}$. For each $p\in\mathbb{P}^1$, the maps $\mathcal{D}(\cdot,p)$ and $\widehat{\mathcal{D}}(\cdot, p)$ define the same point in $\mathcal{M}_{2,\rho}$ and there exists $\phi_p\in\text{PSL}(2,\mathbb{C})$ such that $$\hat{\mathcal{D}}(x,p)=\phi_p\circ \mathcal{D}.$$ By construction the map $\mathbb{P}^1\rightarrow \text{PSL}(2,\mathbb{C})$ defined by  $p\mapsto\phi_p$ is holomorphic, hence constant $\phi_p=\phi$.
Since $\rho$ is irreducible, the action of $h$ can be interpreted from $X\times\mathbb{P}^1$ to $X\times\mathbb{P}^1$, preserving each horizontal and the Riccati foliation. Hence a form of type
$$\mathrm{d}z=\alpha z^2+\beta z+\gamma$$
where $\alpha,\beta$ and $\gamma$ belong to $\Omega^1(X)$ defines the Riccati foliation.
The invariance under the hyperelliptic involution $h:X\to X$ of the Riccati foliation implies that the one-forms $\alpha,\beta$ and $\gamma$ are each invariant under the hyperelliptic involution. Hence each defines a holomorphic one-form on $X/h\equiv \mathbb{P}^1$. Hence $\alpha=\beta=\gamma=0$, which is impossible. Hence there are at most two fixed points of the action of $h$ on each rational curve in $\mathcal{M}_2$.
In fact, the normal form (\ref{normalfA}) for $\text{SL}_2(\mathbb{C})$-systems induces after projectivization
a Riccati equation of the form $\mathrm{d}z=\alpha z^2+\gamma$ which are invariant by $(x,y,z)\mapsto(x,-y,-z)$
(recall $h(x,y)=(x,-y)$) and only $\sigma(x,y)=(x,y,0)$ or $(x,y,\infty)$ are invariant.

(iii) As in the previous proof, a fixed point $\sigma$ for the action of the hyperelliptic involution on $\mathcal{M}_{2}$ corresponds to the pull back of a singular projective structure $\sigma_h$ on $X/h\equiv \mathbb{P}^1$ by the projection map $X\rightarrow X/h$. The angle of $\sigma_h$ at a point $[x]\in X/h$ is the angle of $\sigma$ at $x$ if $x$ is not fixed by $h$ and half the angle of $\sigma$ at $x$ otherwise.
On the other hand, branch points are invariant under the action of $h$.
If no branch point of $\sigma$ is fixed by $h$, then $\sigma_h$ has a simple branch point of angle $4\pi$ and six points of angle $\pi$. As we have mentioned in section \ref{s:the tangent bundle}, we can move the branch points. This operation can be done by a surgery called Schiffer variations (see \cite[p.391]{CDF}) that deform $\sigma_h$ isomonodromically and continuously in the space of singular projective structures on $\mathbb{P}^1$. The double covers of $\mathbb{P}^1$ branching at the points of angle $\pi$ of this isomonodromic deformation produce a nontrivial isomonodromic deformation in $\mathcal{M}_{2}$ that is fixed by the hyperelliptic involution.
If some branch point is fixed by $h$, then $\sigma$ has a single branch point of angle $6\pi$. Therefore $\sigma_h$ has a singularity of angle $3\pi$. Since the angle is bigger than $2\pi$ we can find two paths that have the same image by the developing map. Applying a Schiffer variation along this pair of twins we obtain a singular projective structure with 6 points of angle $\pi$ and a point of angle $4\pi$.
 Again,  the pull-back of the deformation by the hyperelliptic branched double cover produces a path of isomonodromic projective structures in $\mathcal{M}_2$ that are fixed by the hyperbolic involution.
\end{proof}

 As a consequence, we have that the quotient of each leaf $\mathcal{M}_{2,\rho}/h$ is a regular complex surface and each smooth $(-4)$-rational curve $\mathcal{R}\subset\mathcal{M}_{2,\rho}$  produces a smooth $(-2)$-rational curve $\mathcal{R}/h\subset \mathcal{M}_{2,\rho}/h$. The restriction of the map induced by (\ref{desing}) to a neighbourhood of the curve in $\mathcal{M}_{2,\rho}/h$ is precisely the collapse of the $(-2)$-curve and gives a surface singularity of type $A_1$. For generic $\rho$ the space $\mathcal{M}_{2,\rho}/h$ embeds in the moduli space of branched projective structures.

In fact, regarding $0$-sections (i.e. sections having self-intersection $0$) of a $\mathbb P^1$-bundle over a genus $2$ curve,
we have the following (see \cite[Theorem 2.1 and Section 3]{HL})

\begin{lemma}\label{lem:classificationbundles}
Given a branched projective structure $(P,\mathcal F,\sigma)$ in $ \mathcal{M}_2$, we are in one of the following cases:
\begin{itemize}
\item {\bf generic bundles}: $P$ is decomposable but not trivial, i.e. admitting exactly two disjoint $0$-sections $\sigma,\sigma'$,
and they are permuted by $h$,
\item {\bf unipotent bundles}: $P$ is undecomposable, $\sigma$ is the unique $0$-section
and it is invariant by $h$,
\item {\bf trivial bundle}: $P=X\times\mathbb P^1$, the set of $0$-sections is isomorphic to $\mathbb P^1$,
and $h$ acts as a Moebius involution on it, fixing exactly two of them.
\end{itemize}
\end{lemma}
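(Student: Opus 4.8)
The plan is to reduce the statement to the classification of strictly semi-stable rank two bundles of degree zero over $X$ (available from \cite[Theorem 2.1 and Section 3]{HL} and \cite{NaRam}) and then to read off the action of $h$ from two facts special to the hyperelliptic genus two situation. Writing $P=\mathbb P E$ for an $\mathfrak{sl}_2$-connection $(E,\nabla)$ with irreducible monodromy, a $0$-section $\sigma$ corresponds by \eqref{eq:number of tangencies} to a degree $0$ line sub-bundle $L\subset E$, whence $E$ is strictly semi-stable. Since $\nabla$ is trace-free, the induced connection on $\det E$ is trivial, so $\det E\simeq\mathcal O_X$. Moreover, by Goldman's theorem (\cite{Goldman2}, as in Lemma \ref{lem:hyperelliptic involution on M2}(i)) one has $h^*\nabla\simeq\nabla$, hence $h^*E\simeq E$; and since $X$ is hyperelliptic, $h$ acts as $-1$ on $\mathrm{Jac}(X)$, i.e. $h^*M\simeq M^{-1}$ for every degree $0$ line bundle $M$ (for instance because $h^*=-\mathrm{id}$ on $H^0(X,\oneforms)$, or because $p+h(p)$ is the hyperelliptic class for all $p$).

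Next I would run through the three bundle types and count degree $0$ sub-line-bundles, equivalently $0$-sections. If $E$ is decomposable then $E=L_1\oplus L_1^{-1}$ by the determinant condition. When $L_1\not\simeq L_1^{-1}$, the fact that a nonzero morphism between degree $0$ line bundles is an isomorphism shows that $L_1,L_1^{-1}$ are the only degree $0$ sub-line-bundles, giving two disjoint $0$-sections and a decomposable but non-trivial $P$. When $L_1\simeq L_1^{-1}$, i.e. $E\simeq L_0\oplus L_0$ with $L_0$ a $2$-torsion bundle, then $P\simeq X\times\mathbb P^1$ and the degree $0$ sub-line-bundles form a $\mathbb P^1$. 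If $E$ is indecomposable, it is a non-split extension $0\to L_0\to E\to L_0^{-1}\to 0$ whose unique degree $0$ sub-line-bundle is $L_0$ (any other would split the sequence); since $h^*E\simeq E$ has unique degree $0$ sub-bundle $h^*L_0\simeq L_0^{-1}$, we get $L_0\simeq L_0^{-1}$, so $E$ is a genuine self-extension and $\sigma$ is the unique $0$-section. This reproduces the three ruled surfaces of the statement.

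It remains to determine how the lift of $h$ to $P$ acts on the $0$-sections. In the trivial case these form a $\mathbb P^1$, which is one of the rational curves of Lemma \ref{l:rational curve}; by Lemma \ref{lem:hyperelliptic involution on M2}(ii) it is preserved and not fixed pointwise, so $h$ restricts to a non-trivial Moebius involution with exactly two fixed sections — explicitly, projectivizing the normal form \eqref{normalfA} gives $\mathrm{d}z=\gamma z^2-\beta$ with $h\colon z\mapsto -z$. In the unipotent case there is a single $0$-section, which is automatically $h$-invariant.

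The decisive case, and the one I expect to be the crux, is the generic one. There $E=L_1\oplus L_1^{-1}$ with $L_1\not\simeq L_1^{-1}$, and applying $h^*M\simeq M^{-1}$ to the factor $L_1$ gives
\begin{equation*}
h^*L_1\simeq L_1^{-1},
\end{equation*}
which is the \emph{other} factor and is distinct from $L_1$. Since $h^*$ carries degree $0$ sub-line-bundles of $E$ to those of $h^*E\simeq E$, it cannot fix $L_1$ and must therefore exchange the two factors; equivalently $h$ permutes the two $0$-sections $\sigma,\sigma'$, as claimed. The only substantive inputs are the semi-stable bundle classification, cited from \cite{HL} and \cite{NaRam}, together with the two hyperelliptic facts $\det E\simeq\mathcal O_X$ and $h^*=-1$ on $\mathrm{Jac}(X)$; granting these, the trichotomy and the $h$-action follow by inspection.
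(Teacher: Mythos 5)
Your proposal is correct, but it follows a genuinely different route from the paper: the paper gives no argument for this lemma at all, deferring entirely to the classification of flat rank two bundles on genus two curves in \cite[Theorem 2.1 and Section 3]{HL}. You reconstruct the statement from scratch, using (a) the correspondence between $0$-sections of $P=\mathbb P E$ and degree-$0$ line sub-bundles $L\subset E$ (formula \eqref{eq:number of tangencies}, as recalled in Section \ref{Sec:Comparaison}), (b) the elementary fact that a nonzero morphism of degree-$0$ line bundles on a compact curve is an isomorphism, which yields the exact count of $0$-sections in each of the three cases, and (c) the two hyperelliptic inputs: $h^*(E,\nabla)\simeq(E,\nabla)$ from Goldman's theorem \cite{Goldman2}, and $h^*=-\mathrm{id}$ on $\mathrm{Jac}(X)$ because $p+h(p)$ is the hyperelliptic class for every $p$. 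The pivot of your argument --- $h^*L_1\simeq L_1^{-1}$, which forces $h$ to exchange the two factors in the generic decomposable case and forces the distinguished sub-bundle to be $2$-torsion in the undecomposable case --- is precisely the content one would extract from \cite{HL}; your version buys self-containedness (only Goldman's theorem and standard curve theory are invoked), while the paper's citation buys brevity and consistency with the terminology (``unipotent'') and finer classification results of \cite{HL}.

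Two small points should be made explicit. First, as with the paper's own proof of Lemma \ref{lem:hyperelliptic involution on M2}(ii), your argument assumes irreducibility of the monodromy: this is what makes Goldman's theorem produce an honest isomorphism $h^*(E,\nabla)\simeq(E,\nabla)$, whereas membership in $\mathcal M_2$ only requires the equivariance to have trivial centralizer in $\mathrm{PSL}_2(\mathbb C)$, and for a reducible representation triviality of the action on the character variety would only identify semi-simplifications. Second, in the decomposable case the determinant condition alone gives $E=M_1\oplus M_1^{-1}$ without controlling degrees; it is semi-stability (which you did establish first) that forces $\deg M_1=0$. Relatedly, the assertion that $h$ acts on the $\mathbb P^1$ of $0$-sections by a Moebius transformation deserves one sentence --- for instance, any lift of $h$ to $X\times\mathbb P^1$ preserving the ruling has the form $(x,z)\mapsto(h(x),Mz)$ with $M\in\mathrm{PGL}_2(\mathbb C)$ constant, since $\mathrm{PGL}_2(\mathbb C)$ is affine and $X$ compact --- but your explicit normal-form computation $\mathrm{d}z=\gamma z^2-\beta$, $h\colon z\mapsto -z$ from \eqref{normalfA} settles this directly, in agreement with the remark at the end of the paper's proof of Lemma \ref{lem:hyperelliptic involution on M2}.
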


Going back to the notations of Section \ref{Sec:FuchsianSystems}, we have that
$$(t_1,t_2,t_3,z_1,z_2,z_3,c_1,c_2,c_3)\in\mathbb C^6$$
is an open chart of $\CON$ or $\PCON$ containing a Zariski open subset of $\SYS$ or $\PSYS$.
Precisely, $(t_1,t_2,t_3)$ are local coordinates on $X\in\Teich$ or $\Mod$, $(z_1,z_2,z_3)$ parametrize bundles
over $X$, and $(c_1,c_2,c_3)$ stand for those connections on a given bundle.
The locus of the trivial bundle is given by
$$\SYS=\{z_1=z_2=z_3\ \text{and}\ c_1+c_2+c_3=0\},$$
or equivalently $\SYS=\{Z_1=Z_2=Z_3=0\}$ where
$$Z_1=c_1+c_2+c_3,\ \ \ Z_2=z_2-z_1\ \ \ \text{and}\ \ \ Z_3=z_3-z_1.$$
The locus of unipotent bundles has codimension $2$, given by
$$\mathcal U_1:=\{z_1=z_2=z_3\}\ \ \ \cup\ \ \ \mathcal U_2:=\{c_1+c_2+c_3=c_1z_1+c_2z_2+c_3z_3=0\},$$
or equivalently $\mathcal U_1=\{Z_2=Z_3=0\}$ and $\mathcal U_2=\{Z_1=c_2Z_2+(c_2+c_3)Z_3=0\}$.
The locus of strictly semi-stable bundles forms a hypersurface $\Upsilon\subset\CON$
that contains unipotent and trivial bundles.
As explained above, $\Upsilon$ has the structure of $A_1\times B$
near a point of $\SYS$. More precisely, we find that the tangent cone of $\Upsilon$ along $\SYS$ is given by
$$\left( 2 t_2 z_1- t_2- z_1 \right)  \left(  t_1- t_3 \right)  Z_1 Z_2
- \left( 2 t_3 z_1- t_3- z_1 \right)  \left( t_1- t_2 \right)  Z_1 Z_3$$
\begin{equation}\label{eq:tangentconic}
+\left( - c1 \left( 2 t_2-1 \right)  \left(  t_1- t_3 \right)
- c_2\left( 2 t_1 t_2+2 t_3 t_1-4 t_3 t_2-2 t_1+ t_2+ t_3 \right)  \right) Z_2 Z_3
\end{equation}
$$+ c_2 \left( 2 t_2-1 \right)  \left(  t_1- t_3 \right)  Z_2^{2}+  \left( t_3-1 \right)  \left(  t_1-t_2 \right)  \left(  c_1+c_2 \right)  Z_3^{2}=0
$$
This computation is similar to \cite[Proposition 5.2]{HL}: we compute the classifying map $\CON\to\mathbb P^3_{NR}$
towards the Narasimhan-Ramanan moduli space of bundles (see \cite{NaRam}) and then pull-back the Kummer
equation of strictly semi-stable bundles.
As we have already shown, a generic isomonodromy leaf $\mathcal L$ in $\CON$ is transversal to $\SYS$:
they intersect as a point $\mathcal L\cap\SYS=\{Q\}$. In other words, the point $Q$ is defined by fixing parameters
$(t_1,t_2,t_3,c_1,c_2,z_1) \in \mathbb C^6$ and the corresponding conic (\ref{eq:tangentconic}) is the tangent cone
of $\mathcal L_0:=\mathcal L\cap\Upsilon$.  A straightforward computation shows that this conic is smooth if, and only if,
the system corresponding to $Q$ is irreducible, i.e. (\ref{RedLocus});
in this case, $\mathcal L_0$ is a surface having a singular point of type $A_1$ at $Q$.
Moreover, $\mathcal L_0$ intersects each component of the unipotent locus $\mathcal U_1\cup\mathcal U_2$
as a smooth curve.
After blowing up $\SYS$, the strict transforms $\widetilde{\mathcal L_0}$ becomes non singular
near the exceptional divisor $\widetilde\SYS$, the conic $\widetilde{\mathcal L_0}\cap\widetilde\SYS$
has self-intersection $-2$ in $\widetilde{\mathcal L_0}$ and the two curves $\widetilde{\mathcal L_0}\cap\widetilde{\mathcal U_i}$
are smooth and disjoint curves: they are the ramification locus of the $2$-fold cover
$\mathcal{M}_{2,\rho}\to\mathcal L_0:=\mathcal{M}_{2,\rho}/h$ considered above.
Details of the above computations will appear in another paper.

}

\section{The higher genus case}\label{r:counter-example}

Theorem \ref{t:localdiffeo} fails in higher genus. An instance of this phenomenon is given by
pulling back a system $(X,A)$ on a genus two Riemann surface  with irreducible monodromy by a parametrized family of ramified coverings $f_t:S_t\rightarrow X$ of fixed degree. For instance, if
we consider for $f_t$ the (irreducible) family of degree two covers $f_t:S_t\stackrel{2:1}{\rightarrow} X$
ramifying over two points, then $S_t$ is a deformation of genus $4$ curves and the deformation
of the pair $f_t^*(X,A)$ is obviously isomonodromic (i.e. with constant monodromy) and irreducible.
We can construct similar deformations with $S_t$ having arbitrary genus $g\ge4$.
{ For genus $g=3$ we need different techniques, since Hurwitz formula does not allow $S_t$ to have genus 3 in this procedure }

In general, infinitesimal rigidity of a system $(X,A)$ can be detected on its eigencurve $C\subset X \times \mathbb P^1$, defined as the projectivization of the eigenvectors of $A$. More precisely, if we denote by $\Pi_2 : X\times \mathbb P^1 \rightarrow \mathbb P^1$ the horizontal fibration, the system $(X,A)$ is rigid if and only if every section of  $N_\mathcal F\simeq (\Pi _2)_{|C}^* T \mathbb P^1$ is the pull-back of a section of $T\mathbb P^1$. This is due to Lemma \ref{l:tangent bundle}, together with an infinitesimal version of Corollary \ref{l:rational curves}. This remark allows to construct infinitesimal deformations of other kinds of systems.

The first ones are the systems  $(X,A)$ whose eigencurve $C\subset X\times \mathbb P^1$ has a vertical component and no horizontal one.  They are generalizations of those coming from ramified covering techniques.  In this case,  the vertical component of $C$ intersects the union of the other components in two points $p$ and $q$ (that might coincide): hence the section of $\Pi_2^* (T\mathbb P^1)_{|C}$ defined by a non zero holomorphic vector field on the vertical component that vanishes at $p$ and $q$, and  by zero on the other components, is not the pull-back of a section of $T\mathbb P^1$ by $\Pi_2$. In particular, the system is not infinitesimally rigid.  As we proved, this is not possible in genus two to find such a system unless there is a component of the eigencurve that is horizontal (which is equivalent to reducibility). However in higher genus it is always possible. { In genus $3$, infinitesimally non rigid examples can be provided by constructing explicit systems using the information on the eigencurve}. Indeed, let $X$ be any smooth curve of genus $\geq 3$, and $x \in X$. The hyperplane $H$ of $\Omega^1(X)$ consisting of forms vanishing at $x$ has dimension $\geq 2$. A generic degree two map $\mathbb P^1$ in $\mathbb P (H)$ comes, as in the proof of Lemma \ref{l:nongeneric horizontal fibers}, from a system whose corresponding eigencurve curve contains the vertical $x\times \mathbb P^1$, but no horizontal.

{ Here is another family of examples where the Riemann-Hilbert mapping is not an immersion, but the eigencurve does not necessarily contain a vertical fibre.} Assume that the eigencurve $C$ is symmetric with respect to the involution of $X\times \mathbb P^1$ given by $(x,y) \mapsto (x , -y)$, which happens if the coefficient $\alpha$ of the system vanishes identically. Notice that when $\beta$ and  $\gamma$ are not $\mathbb C$-proportional, then the system is irreducible, and that if $\beta$ and $\gamma$ do not share a common zero, then the curve $C$ is smooth. We claim that in this case, we have a subspace of infinitesimal isomonodromic deformations for our system whose dimension is $\geq g-3$. From Lemma \ref{l:tangent bundle}, we know that this space has dimension
$$h^0 (\Pi_2 ^* T\mathbb P^1, C) - h^0 (T\mathbb P^1, \mathbb P^1)  = h^0 (\Pi_2 ^*  \OO(2) , C) - 3 .$$
Denote   $ \pi_2: X \rightarrow \mathbb P^1$ the meromorphic function  $\pi_2 = \beta / \gamma$, and by $r: \mathbb P^1 \rightarrow \mathbb P^1$ the double covering $r(y)= y^2$. We have the relation $$ \pi_2 \circ  \Pi_1 = r \circ \Pi_2 .$$
Since the line bundle $\OO(2)$ over $\mathbb P^1$ is the preimage by $r$ of $\OO(1)$, and that the preimage of $\OO(1)$ by $\pi_2$ is $K_X$, we get
$$ \Pi_2 ^* \OO (2) = \Pi_1^* ( \pi_2 ^* \OO(1) ) = \Pi_1^* ( K_X ) .$$
We thus obtain
$$h^0 (\Pi_2 ^* \OO(2) , C) \geq h^0 (K_X, X) = g, $$
which shows our claim. The precise condition ensuring the Riemann-Hilbert mapping to be an immersion in terms of properties of the eigencurve $C$ seems hard to find in general.
\section{Appendix: the complex structure on $\mathcal{M}_k$}
\label{s:appendix}
In this appendix we prove Theorem \ref{t:isomonodromy foliation}.

Suppose $S$ is an orientable compact connected topological surface of genus $g\geq 2$, $k\geq 0$ an integer and $\widetilde{S}\rightarrow S$ a fixed universal covering map.

As with $\mathcal{M}_{k,\rho}$, the complex structure on $\mathcal M_{k}$ is the complex structure defined by the deformation theory of projective structures. That is, assume that $Y$ is a complex analytic space and that we have the following data :
\begin{enumerate}
\item[a)]  a holomorphic submersion $\Pi : Z\rightarrow Y$ having as fibers simply connected Riemann surfaces,
\item[b)] a free proper discontinuous action of $\Gamma = \pi_1(S)$ on $Z$ preserving fibers of $\Pi$,
\item[c)] a holomorphic map $\mathcal D : Z\rightarrow \mathbb P^1$  whose restriction to the fiber over each $y\in Y$ has a $k$ critical orbits counted with multiplicity and is equivariant with respect to a (family of) representation $\rho(y):\Gamma\rightarrow \text{PSL}_2(\mathbb{C})$.
\end{enumerate}
Hence the restriction of $\mathcal{D}$ to the fiber over $y\in Y$ defines a branched projective structure over $S$ that we call $\sigma(y)$.
The complex structure that we are going to construct on $\mathcal{M}_k$ is such that the natural map $Y\rightarrow \mathcal{M}_k$ defined by $y\mapsto \sigma(y)$ is holomorphic.

Let $\sigma_0\in\mathcal{M}_k$ be a point whose equivariance is $[\rho_0]$. Take a small neighbourhood $V\subset S$ formed by a union of disjoint round discs $V_i$  (for $\sigma_0$), each containing one point of  $\text{div}(\sigma_0)$ and  $U$ neighbourhood of $[\rho_0]$ in the character variety. Let $\mathcal{H}_V=\Pi_{i}\mathcal{H}_{V_i}$ denote the product of Hurwitz spaces associated to each disc in $V$.  We define a chart around $\sigma_0$ by using the following

\begin{lemma}\label{l:chart}
  There exists a holomorphic deformation $\Pi: Z\rightarrow U\times\mathcal{H}_V$ of branched projective structures such that the map $U\times\mathcal{H}_V\rightarrow\mathcal{M}_k$ is injective around the base point.
\end{lemma}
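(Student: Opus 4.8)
The plan is to realize the claimed family by gluing, over the parameter space $U\times\mathcal{H}_V$, a single holomorphically varying \emph{unbranched} projective structure on the complement $W:=S\setminus V$ to the universal branched covers supplied by the Hurwitz factors $\mathcal{H}_{V_i}$. First I would record that, after shrinking $U$, every $\rho\in U$ has trivial centralizer (an open condition satisfied at $[\rho_0]$), so that each structure I build admits a \emph{unique} $\rho$-equivariant developing map; this is what lets me speak unambiguously of a point $\sigma(\rho,a)\in\mathcal{M}_k$ and later read off its monodromy. The surface with boundary $W$ has free fundamental group, and the restriction of $\sigma_0$ to $W$ is an immersive (unbranched) projective structure. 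Exploiting the flexibility of projective structures on surfaces with free $\pi_1$, I would choose a holomorphic family $\rho\mapsto\sigma_0^W(\rho)$ of $\rho$-equivariant immersive structures on $W$, with $\sigma_0^W(\rho_0)$ equal to $\sigma_0|_W$, normalized near each boundary circle $\partial V_i$ so that its developing map is in the standard local form used in the surgery of \cite{CDF} (fixing the anchor point $Q_i\in\partial V_i$ together with its image in $\bbP^1$). This boundary normalization, held rigidly as the parameters vary, is the device that makes the subsequent gluing uniform.

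Next I would carry out the gluing. Over each disc $V_i$ the Hurwitz space $\mathcal{H}_{V_i}$ carries its universal family of branched covers $P_a$, in the coordinates of \eqref{eq:coordinates}, normalized so that $P_a(1)=1$ and so that the boundary values agree with those of $P_0$; these are glued to $\sigma_0^W(\rho)$ along $\partial V_i$ exactly as in the fixed-monodromy construction recalled in the proof of Lemma \ref{l:tangent bundle}. Because the boundary data on the $W$-side have been kept in standard form independently of $(\rho,a)$, the gluing recipe applies verbatim and depends holomorphically on $(\rho,a)$. The outcome is a holomorphic fibration $\Pi:Z\to U\times\mathcal{H}_V$ whose fibres are the universal covers of the glued Riemann surfaces, together with a fibrewise $\pi_1(S)$-action preserving $\Pi$ and a $\rho$-equivariant map $\mathcal{D}:Z\to\bbP^1$ whose fibrewise critical orbits are precisely the $k$ glued branch points. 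This is exactly the data (a)--(c) of the deformation-theoretic description, so $(\rho,a)\mapsto\sigma(\rho,a)$ is a holomorphic deformation of branched projective structures with $\sigma(\rho_0,0)=\sigma_0$ and monodromy class $\rho$.

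Finally I would prove injectivity near the base point. Suppose $\sigma(\rho,a)$ and $\sigma(\rho',a')$ are equivalent in $\mathcal{M}_k$. Since the monodromy class is an invariant of the equivalence class and equals the parameter by construction, and since distinct points of the neighbourhood $U$ in the $\mathrm{PSL}_2(\mathbb{C})$-character variety are distinct conjugacy classes, we obtain $\rho=\rho'$. Fixing this common monodromy reduces the question to the fixed-$\rho$ situation, where $\mathcal{H}_V\to\mathcal{M}_{k,\rho}$ is the Hurwitz chart of \cite{CDF} used in Lemma \ref{l:tangent bundle}, already known to be injective near the base point; hence $a=a'$. I expect the main obstacle to be the first step: producing a holomorphic family of immersive structures on $W$ that realizes nearby monodromies while keeping the boundary data rigidly normalized, since this is precisely where the deformation of the representation must be reconciled with the rigidity required for a uniform gluing. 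Once this normalization is secured, both the joint holomorphicity of the glued family and the injectivity statement follow along the lines above, and the chart $U\times\mathcal{H}_V$ has the expected dimension $(6g-6)+k$.
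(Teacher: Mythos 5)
Your overall architecture matches the paper's: deform the monodromy in the $U$ factor, keep the branching data in the Hurwitz factors $\mathcal{H}_{V_i}$, glue along $\partial V$, and prove injectivity exactly as the paper does (the monodromy class pins down the $U$ coordinate because $U$ was lifted to the representation variety so that no two of its points are conjugate, and the fixed-$\rho$ injectivity of $\mathcal{H}_V\to\mathcal{M}_{k,\rho}$ from \cite{CDF} pins down the Hurwitz coordinate). Your final paragraph is essentially identical to the paper's injectivity argument.

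The genuine gap is the step you yourself flag as the ``main obstacle'': the existence of a holomorphic family $\rho\mapsto\sigma_0^W(\rho)$ of $\rho$-equivariant structures on $W=S\setminus V$ whose developing maps are held rigidly fixed near $\partial V$. This is not a technical refinement that can be deferred; it is the entire content of the lemma, and, as you state it, it is actually impossible if the complex structure on $W$ is held fixed: developing maps are holomorphic, so if $D_\rho$ and $D_{\rho_0}$ are holomorphic on the same Riemann surface $\widetilde{W}$ and agree on an open neighbourhood of the lifts of $\partial V$, they agree identically by analytic continuation, forcing $\rho=\rho_0$. Hence the fibre complex structures must vary with $\rho$, and the real problem is to construct this variation so that the total space is a complex manifold, the family is jointly holomorphic, and the charts near $V$ nevertheless stay constant in $\rho$. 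The paper resolves exactly this by a device absent from your proposal: it forms the suspension $W_U=(U\times\widetilde{S}\times\mathbb{P}^1)/\pi_1(S)$ with its transversely holomorphic flat foliation $\mathcal{F}$, extends $\sigma_0$ holomorphically near the discs by $\sigma(\rho,z)=\sigma(\rho_0,z)$ in local flat trivializations, interpolates these local extensions by a partition of unity to a global \emph{smooth} section transverse to $\mathcal{F}$ away from the tangency locus, and then lets the section itself define the complex structure on $X_U$ by pulling back the transverse structure of $\mathcal{F}$ (keeping the initial structure near the tangencies, where the two are compatible). In that construction the varying fibre complex structures are an output, not an input, which is precisely what evades the analytic-continuation obstruction. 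Without this construction, or a citation establishing the family you postulate in the holomorphic category with the required boundary rigidity, your proof does not go through.
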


In $\mathcal{M}_k$ we define the topology generated by images of open sets  via the maps produced in Lemma \ref{l:chart}. The induced topology on each $\mathcal{M}_{k,\rho}\subset\mathcal{M}_k$ coincides with the cut-and-paste topology defined in \cite{CDF}.

\begin{lemma}\label{l:separation}
The topological space $\mathcal{M}_k$ is separated and the maps in Lemma \ref{l:chart} provide a topological atlas on $\mathcal{M}_k$.
\end{lemma}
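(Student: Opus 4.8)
The plan is to prove the two assertions in turn, dealing first with the atlas property and then with separatedness, which is the subtler point. Throughout I use that the topology on $\mathcal{M}_k$ is, by definition, generated by the images of open sets under the maps of Lemma \ref{l:chart}, so that those maps are automatically open; the work consists in showing they are injective homeomorphisms with homeomorphic transitions, and that the space is Hausdorff.

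For the atlas, note first that every point of $\mathcal{M}_k$ lies in a chart of Lemma \ref{l:chart}, so these charts cover the space. Let $\sigma$ lie in the overlap of two charts coming from data $(Z,\Pi,\mathcal{D})$ and $(Z',\Pi',\mathcal{D}')$ as in (a)--(c) of the setup of Theorem \ref{t:isomonodromy foliation}. I would show that the transition map is a biholomorphism, hence a homeomorphism. The point is that each chart records $\sigma$ by its monodromy class in the $U$-factor and by the local branching data in the Hurwitz factor $\mathcal{H}_V$. The monodromy depends holomorphically on the chart coordinates by construction, and passing from the cutting discs of one chart to those of the other changes the Hurwitz coordinates holomorphically, since the branch points and their local models vary holomorphically with $\sigma$. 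Composing the (injective) classifying map of one chart with the inverse of the other therefore yields a holomorphic bijection on the overlap whose inverse is holomorphic by the symmetric argument. This compatibility shows that the chart images form a basis and that each chart is a homeomorphism onto an open subset of $\mathcal{M}_k$, which is precisely the assertion that they provide a topological atlas.

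For separatedness, take $\sigma_1\neq\sigma_2$. The monodromy map $\mathcal{M}_k\to\mathrm{Hom}(\pi_1(S),\mathrm{PSL}_2(\mathbb{C}))//\mathrm{PSL}_2(\mathbb{C})$ is continuous, since the $U$-factor of each chart is a neighbourhood in the character variety; as the latter is a Hausdorff variety, the case of distinct monodromy classes is immediate by pulling back disjoint neighbourhoods. The remaining case, $[\rho(\sigma_1)]=[\rho(\sigma_2)]=[\rho]$, is the crux. Since the charts are first countable, I would argue by contradiction: were $\sigma_1,\sigma_2$ inseparable there would be a sequence $\tau_n$ converging to both. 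Reading this convergence in the chart around $\sigma_i$ produces representatives $\rho_n^{(i)}$ of the monodromy of $\tau_n$ and developing maps $\mathcal{D}_{\tau_n}^{(i)}$, converging locally uniformly (together with their domains) to $\mathcal{D}_{\sigma_i}$; this is exactly the continuous dependence furnished by the gluing of Lemma \ref{l:chart}. Because $\tau_n$ is a single structure and $\rho$ has trivial centralizer, the two readings differ by a unique $A_n\in\mathrm{PSL}_2(\mathbb{C})$ with $\mathcal{D}_{\tau_n}^{(2)}=A_n\circ\mathcal{D}_{\tau_n}^{(1)}$ and $\rho_n^{(2)}=A_n\rho_n^{(1)}A_n^{-1}$. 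Passing to the limit gives $A_n\to A$ with $\mathcal{D}_{\sigma_2}=A\circ\mathcal{D}_{\sigma_1}$ and the matching conjugation of equivariances, which by (\ref{eq:developing maps}) and (\ref{eq:equivariance}) says exactly that $\sigma_1$ and $\sigma_2$ are equivalent, i.e. equal in $\mathcal{M}_k$; this contradiction proves separatedness.

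I expect the main obstacle to be this last case, and within it the control of two independent ambiguities: the conjugation ambiguity in the representative of $[\rho]$ along the sequence, and the promotion of convergence of chart coordinates to genuine local uniform convergence of developing maps over varying domains. Both are handled by the explicit Hurwitz-gluing description behind Lemma \ref{l:chart}, together with the trivial-centralizer hypothesis, which pins down the comparison element $A_n$ uniquely and guarantees the limit $A$ exists; this is where the bulk of the verification lies.
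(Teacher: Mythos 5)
Your atlas discussion and your treatment of the distinct-monodromy case match the paper's (the paper separates points with non-conjugate monodromies by choosing charts with disjoint $U$-factors, and dispatches the atlas claim by noting the chart maps are injective and open, hence local homeomorphisms for the topology they generate; holomorphy of transitions is deferred to Lemma \ref{l:holomorphic maps}). Where you genuinely diverge is the crux case of equal monodromy class: you compare developing maps directly and try to produce a limit M\"obius transformation, whereas the paper first reduces, via the projection to Teichm\"uller space, to the case where $\sigma_0$ and $\sigma_1$ induce the \emph{same} complex structure $X$, and then compares the Schwarzian derivatives $q_i=\{\mathcal{D}_i(z),z\}\,\mathrm{d}z^2$, which are meromorphic quadratic differentials on $X$. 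The whole point of that device is that the Schwarzian is invariant under postcomposition by elements of $\mathrm{PSL}_2(\mathbb{C})$, so the conjugation ambiguity along the sequence $\sigma_n$ disappears entirely: one only has to note that the germs $q_n$ at a fixed non-critical point $z_0$ have a unique holomorphic limit, which must equal both $q_0$ and $q_1$, and conclude $q_0=q_1$ on all of $X$ by analytic continuation.

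The genuine gap in your argument is exactly the step you wave at: the existence of the limit $A$ of the comparison elements $A_n$. The trivial-centralizer hypothesis gives \emph{uniqueness} of each $A_n$, but it does not give precompactness of the sequence $(A_n)$ in $\mathrm{PSL}_2(\mathbb{C})$, and you offer no other mechanism. Note that $\mathcal{M}_k$ only requires the equivariance to have trivial centralizer, not to be irreducible; for reducible representations with trivial centralizer (e.g.\ non-abelian upper-triangular ones) the conjugation action of $\mathrm{PSL}_2(\mathbb{C})$ near the orbit is not proper, so one cannot deduce boundedness of $A_n$ from the convergence of the two sequences of representations $\rho_n^{(1)}$, $\rho_n^{(2)}$ alone --- which is what your phrasing suggests. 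The step can be repaired, but only by using the developing maps rather than the representations: from $\mathcal{D}^{(2)}_{\tau_n}=A_n\circ\mathcal{D}^{(1)}_{\tau_n}$ and locally uniform convergence of both sides, $A_n$ converges locally uniformly, on a nonempty open subset of $\mathbb{P}^1$ in the image of $\mathcal{D}_{\sigma_1}$, to the non-constant map $\mathcal{D}_{\sigma_2}\circ\mathcal{D}_{\sigma_1}^{-1}$; a sequence of M\"obius transformations whose restrictions to an open set converge to a non-constant holomorphic map converges in $\mathrm{PSL}_2(\mathbb{C})$ (otherwise, by the standard degeneration of diverging M\"obius sequences, the limit would be constant off one point). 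Without this (or the paper's Schwarzian trick, which makes the issue evaporate), your proof of separatedness is incomplete at its central point. A secondary, smaller issue is that the ``convergence of developing maps together with their domains'' read off from the charts is itself part of what must be justified from the Hurwitz-gluing construction; the paper handles the analogous point by working with a holomorphically varying fiber coordinate near $z_0$ in the universal curve over the chart.
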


To check that the transition maps are holomorphic with respect to the natural product complex structures on the $U\times\mathcal{H}_V$'s it suffices to check that for every holomorphic deformation $Y\rightarrow\mathcal{M}_k$ whose image is contained in a chart, the natural projections onto $U$ and $\mathcal{H}_{V_i}$ are holomorphic. The projection onto $U$ is trivially holomorphic by definition of a holomorphic deformation.  The holomorphicity of the other projections needs only to be checked around the points in $Y$ corresponding to the generic stratum of the deformation, since by continuity and Riemann Extension Theorem, the map will be also holomorphic  at the points on non-generic strata.
We are therefore reduced to proving the following

\begin{lemma}\label{l:holomorphic maps}
  Let $\Pi:Z\rightarrow (Y,y_0)$ be a germ of holomorphic deformation of branched projective structures all lying in a fixed stratum. Then, in any chart $U\times\mathcal{H}_V$ of  $\mathcal{M}_k$ containing the point associated to $y_0$, the projection $(Y,y_0)\rightarrow \mathcal{H}_{V_i}$ is holomorphic.
\end{lemma}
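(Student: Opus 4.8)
The plan is to reduce the holomorphicity of the projection $Y\to\mathcal H_{V_i}$ to the holomorphic dependence on $y$ of the branch values of the developing map inside the disc $V_i$. Recall from the charts of Lemma~\ref{l:chart} (and \eqref{eq:coordinates}) that a point of $\mathcal H_{V_i}$ records, up to isomorphism of the source, a branched covering of the fixed disc $V_i\subset\bbP^1$ of fixed degree, with fixed boundary marking and fixed, locally constant monodromy data. In the polynomial normal form $P_a$ of \eqref{eq:coordinates}, the computation \eqref{eq: isom} shows that the differential of the map sending $a=(a_1,\dots,a_{n_q})$ to the critical values $v_j=P_a(c_j)$, where $P_a'(c_j)=0$, is the matrix $(c_j^{\,\ell}-1)_{1\le \ell,j\le n_q}$. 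This matrix has a column of zeros when some $c_j=1$ and a repeated column when $c_j=c_k$, so its determinant equals, up to sign, the Vandermonde-type product $\prod_j(c_j-1)\prod_{j<k}(c_k-c_j)$. On the generic stratum the critical points $c_j$ are distinct and different from the marked boundary point, whence this determinant is nonzero and the branch map $\beta:\mathcal H_{V_i}\to\mathrm{Conf}_{n_q}(V_i)$ into the configuration space of the $n_q$ critical values is a local biholomorphism. As noted just before the statement, by the Riemann extension argument we may assume the family lies in this generic stratum. It thus suffices to show that the induced map $Y\to\mathrm{Conf}_{n_q}(V_i)$ is holomorphic and then to lift it through $\beta$.

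To produce the branch values directly from the developing map, I would restrict, over a neighbourhood of the $i$-th branch locus, the holomorphic submersion $\Pi:Z\to Y$ and the holomorphic map $\mathcal D:Z\to\bbP^1$ to a sub-family $\varpi:W\to Y$ whose fibre $W_y$ is the piece of the fibre of $\Pi$ lying over $V_i$, so that $\mathcal D|_{W_y}:W_y\to V_i$ is the branched covering recorded by $\sigma(y)$ in the $i$-th disc. In fibre-adapted holomorphic coordinates $(y,z)$ on $W$, the relative critical locus $\mathrm{Crit}=\{\partial_z\mathcal D=0\}$ is an analytic subset of $W$. On the generic stratum the fibrewise critical points are nondegenerate, $\partial_z^2\mathcal D\neq0$, so by the implicit function theorem $\mathrm{Crit}$ is smooth and $\varpi|_{\mathrm{Crit}}:\mathrm{Crit}\to Y$ is an unramified covering of degree $n_q$; it is proper because for $y$ close to $y_0$ the branch points cannot leave $V_i$. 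Composing with $\mathcal D$ gives the critical values, and since both $\mathcal D$ and the local holomorphic sections of $\varpi|_{\mathrm{Crit}}$ are holomorphic, the resulting map $Y\to\mathrm{Conf}_{n_q}(V_i)$ is holomorphic.

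Finally, the map $Y\to\mathcal H_{V_i}$ is a set-theoretic lift of the holomorphic map $Y\to\mathrm{Conf}_{n_q}(V_i)$ through $\beta$. It is continuous: the branch data move continuously and the monodromy type is locally constant, so the point of $\mathcal H_{V_i}$ attached to $\sigma(y)$ varies continuously with $y$. A continuous lift of a holomorphic map through a local biholomorphism is holomorphic, since locally it must coincide with the holomorphic local inverse of $\beta$ composed with $Y\to\mathrm{Conf}_{n_q}(V_i)$. Hence $Y\to\mathcal H_{V_i}$ is holomorphic. Together with the tautologically holomorphic projection $Y\to U$, this shows that all transition maps between the charts of Lemma~\ref{l:chart} are holomorphic, completing the construction of the complex structure on $\mathcal M_k$.

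The step I expect to require the most care is the bookkeeping of normalizations behind the identification of $\mathcal H_{V_i}$ with branch values: after fixing the target disc $V_i$, its boundary marking, and the conformal normalization bringing each $\mathcal D|_{W_y}$ to the polynomial form $P_a$, one must check that the only genuinely varying invariant is the unordered tuple of critical values, the residual reparametrization of the source being exactly what the Hurwitz space quotients out. Once this is granted, the holomorphic behaviour of $\mathrm{Crit}$ and of its image under $\mathcal D$ is a routine consequence of $\mathcal D$ being holomorphic on the total space $Z$, and the fixed-stratum hypothesis is precisely what makes $\varpi|_{\mathrm{Crit}}\to Y$ finite and unramified, so that the branch values are honest holomorphic coordinates.
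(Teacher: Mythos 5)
Your proof only covers the case where the fixed stratum is the generic one (all fibrewise critical points simple), and the step by which you dispose of the other cases --- ``by the Riemann extension argument we may assume the family lies in this generic stratum'' --- is not available inside the lemma. The Riemann extension reduction occurs \emph{before} the lemma and reduces holomorphicity of the projections to holomorphicity near points of the generic stratum \emph{of the deformation} $Y$; that stratum can be any stratum of $\mathcal{M}_k$. For instance, a family all of whose members have a single branch point of multiplicity two lies entirely in a non-generic stratum of $\mathcal{M}_k$: its own generic stratum \emph{is} that stratum, and there is nothing to extend from. This is precisely why the lemma is stated for an arbitrary fixed stratum (the statement must serve arbitrary holomorphic deformations $Y\rightarrow\mathcal{M}_k$, not only transition maps between the charts of Lemma \ref{l:chart}). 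On a non-generic stratum both pillars of your argument collapse: when critical points collide, your Jacobian determinant $\prod_j(c_j-1)\prod_{j<k}(c_k-c_j)$ vanishes, so the map to the configuration space of $n_q$ critical values is not a local biholomorphism (the stratum has dimension equal to the number of distinct critical points, not $n_q$); and the implicit function theorem step requires $\partial_z^2\mathcal{D}\neq 0$, which fails at a degenerate critical point.

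Your approach is otherwise the same in spirit as the paper's --- critical values as local coordinates on the stratum, plus holomorphic variation of the critical values --- but the paper handles all strata at once by invoking the fact, established in \cite{CDF}, that on \emph{every} stratum $B$ of $\mathcal{H}_{V_i}$ the map sending a point of $B$ to the critical values of its developing map is a local holomorphic diffeomorphism; the holomorphicity of the critical values along the deformation is then immediate from the definition of a holomorphic deformation. Your argument can be repaired along the same lines: for a family confined to a stratum with critical multiplicities $(m_1,\dots,m_r)$, each critical point of constant multiplicity $m$ still varies holomorphically in $y$ (write $\partial_z\mathcal{D}(y,\cdot)$ via the Weierstrass preparation theorem, or express the critical point by a residue formula, using that the multiplicity does not jump along the family), so the $r$ critical values are holomorphic in $y$; what you must then prove or quote from \cite{CDF} is that these $r$ values are local coordinates on that stratum --- your Vandermonde computation establishes this only when $r=n_q$.
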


 To finish the proof of Theorem \ref{t:isomonodromy foliation} we just need to remark that the monodromy map  is given in any chart by a projection onto the $U$-factor, thus it is a local holomorphic submersion. Let us now prove the preceeding lemmas.

 \begin{proof}[Proof of Lemma \ref{l:chart}]
 Over each $\mathcal{H}_{V_i}$ there exists a holomorphic family $Z_i\rightarrow \mathcal{H}_i$ of branched projective structures on the disc $V_i$ with prescribed boundary values that contains the restriction of $\sigma_0$ to $V_i$ (see \cite{CDF}).

 Choose a  lift of the chosen neighbourhood $U$ of $[\rho_0]$ to $\text{Hom}(\pi_1(S),\text{PSL}_2(\mathbb{C}))$ around $\rho_0$  by imposing that no two points belong to the same $\text{PSL}_2(\mathbb{C})$-orbit  and, by abuse of language,  call it $U$. We will first produce a holomorphic deformation $X_U \rightarrow U$ of branched projective structures over $S$,  such that the monodromy of the branched projective structure over a point $\rho\in U$ is the homomorphism $\rho$.
We can moreover assume the existence of an open set $W\subset X_U$
holomorphically equivalent to $U \times V$ such that the holomorphic map $X_U\setminus W\rightarrow U$ is equipped with a deformation of branched projective structures on $S\setminus V$ with prescribed boundary values. The latter can be glued to the $Z_i$'s to form a holomorphic deformation $Z\rightarrow U\times \mathcal{H}_V$ of $\sigma_0$, as desired.

 As a smooth manifold, $X_U$ is defined to be the product $U\times S$. Remark that $\sigma_0$ induces a natural complex structure $\tau_0$ on $S$. The group $\pi_1(S)$ acts on $U\times \widetilde{S}\times \mathbb{P}^1$ by $\gamma\cdot(\rho,z,w)=(\rho, \gamma\cdot z, \rho(\gamma)(w))$. The action preserves the fibrations onto the first two factors, and the 'horizontal' smooth foliation by real surfaces. On the quotient space $W_U$, the $\mathbb{P}^1$-fibration $\pi_U: W_U\rightarrow U\times S$ is well defined and the induced foliation $\mathcal{F}$ is transversely holomorphic. The complex structure $\tau_0$ on $S$ defines a complex structure on $W_U$ and on $U\times S$. With respect to this complex structure, the fibration $\pi_U$ together with the foliation $\mathcal{F}$ are locally holomorphically trivializable. Consider a covering $\{W_i\}$ of $\rho_0\times S\subset U\times S$ where the trivialization is possible and such that some neighbourhood of each $V_i$ is compactly contained in some $W_i$. Over each $W_i$ the section $\sigma_0$ can be extended to a holomorphic section defined on $W_i$ (with respect to the given holomorphic structure) by imposing $\sigma(\rho,z)=\sigma(\rho_0,z)$. At the intersections $W_i\cap W_j$, the chosen continuations do not coincide,  but they are transverse to the foliation $\mathcal{F}$. By using a partition of unity associated to the $W_i$'s, we can change the sections to a smooth section $\sigma:X_U\rightarrow W_U$ that is still transverse to the foliation on the $W_i\cap W_j$'s. On the neighbourhoods of the $V_i$'s, the section coincides with the first (holomorphic) extension. By construction, the tangencies  between the foliation $\mathcal{F}$ and the section $\sigma$ correspond by projection to the set $T=\sqcup U\times q_i$ where $q_i$ is a point of $\text{div}(\sigma_0)$. The transversely holomorphic structure of $\mathcal{F}$ induces a holomorphic structure on $X_U\setminus T$. Around $T$, we also have a holomorphic structure defined by the initial complex structure. By construction these complex structures are compatible, so they produce a complex structure on $X_U$. This complex structure induces a complex structure on $W_U$ such that the section $\sigma$, the foliation $\mathcal{F}$ and the projection $X_U\rightarrow U$ are holomorphic. The analytic continuation of any germ of $\sigma$ at a point in $p_0\times S$  produces a holomorphic map $\widetilde{X_U}\rightarrow \mathbb{P}^1$ defined on the universal cover of $X_U$ with all the properties of a holomorphic deformation of $\sigma_0$. Over a point $\rho\in U$, the monodromy of the associated branched projective structure is the homomorphism $\rho$. The local holomorphic triviality of the fibration $\pi_U$ equipped with the foliation $\mathcal{F}$ and the section $\sigma$ around a $\rho_0\times V_i$ allows to glue the given model to the holomorphic deformations $Z_i\rightarrow \mathcal{H}_{V_i}$ as a complex manifold and construct the desired holomorphic deformation $Z\rightarrow U\times\mathcal{H}_V$ of $\sigma_0$.

 To prove the injectivity of the associated map $c: U\times\mathcal{H}_V\rightarrow \mathcal{M}_k$, remark that for each point in $\mathcal{M}_k$ there is a well defined class of monodromy homomorphism. Since by construction no pair of points in $U$ are conjugated by a non-trivial element in $\text{PSL}_2(\mathbb{C})$, we have that $c(\rho_1,h_1)=c(\rho_2,h_2)$ implies $\rho_1=\rho_2$. On the other hand, we know by \cite{CDF} that the restriction $h\mapsto c(\rho_1, h)$ is injective; hence $h_1=h_2$.
\end{proof}

\begin{proof}[Proof of Lemma \ref{l:separation}] Suppose $\sigma_0$ and $\sigma_1$ are two points in $\mathcal{M}_k$ that are not separated. If their monodromies are not conjugated, we can find disjoint open neighbourhoods $U_0$ and $U_1$ around them and construct a chart using these open sets having disjoint images. Hence both can be thought of as smooth sections of a flat bundle $S\times_{\rho_0}\mathbb{P}^1$. If the complex structures  on $S$ (and therefore on the flat bundle) induced by $\sigma_0$ and $\sigma_1$ are not equivalent, then we can choose small open sets $U_0, V_0$ and $U_1,V_1$ that define charts and such that the projection of the image of the associated chart to Teichm\" uller space is contained in disjoint sets. Hence we can suppose that $\sigma_0$ and $\sigma_1$ induce the same complex structure $X$ on $S$.
 Let $z\in\mathbb{D}\simeq \widetilde{X}$ be a uniformizing variable of $X$. If $\mathcal{D}_i$ is the developing map with equivariance $\rho_0$ associated to $\sigma_i$, the meromorphic quadratic differential $$\{\mathcal{D}_i(z),z\}\mathrm{d}z^2\quad\text{ on }\widetilde{X}$$ where $\{f(z),z\}$ denotes the Schwarzian derivative of $f$ with respect to $z$, descends to $X$. Its poles lie precisely on the branched points of $\sigma_i$. Denote by  $q_0$ and $q_1$ respectively the meromorphic quadratic differentials  on $X$ thus obtained for $i=0,1$. The branched projective structures $\sigma_0$ and $\sigma_1$ are equivalent if and only if $q_0=q_1$ on $X$. However, this last equality follows as soon as we find an open set of $X$ where it is satisfied.  Suppose that there exists a sequence $\sigma_n\in\mathcal{M}_k$ converging to $\sigma_0$ and to $\sigma_1$. Take a point $z_0\in X$ where neither $q_0$ nor $q_1$ has a pole. In a neighbourhood of $z_0$ in the universal curve bundle over $\mathcal{M}_k$ we can extend the germ at $z_0$ of the coordinate $z$ on $X$ holomorphically as a local holomorphic coordinate on each fibre. Let $\rho_n$ be represnetatives of the equivariance of $\sigma_n$ such that $\rho_n\rightarrow \rho_0$. Denote by $\mathcal{D}_n$ the developing map of $\sigma_n$ with equivariance $\rho_n$. For each $n$ the  map $q_n(z):=\{\mathcal{D}_n(z),z\}$  is holomorphic and injective on the intersection of the neighbourhood of $z_0$ with the fibre of the universal curve bundle corresponding. As $n$ tends to infinity the sequence of germs $q_n$ has a unique holomorphic limit germ of function. By construction this limit coincides with both $q_0$ and $q_1$. Therefore $q_0=q_1$ around $z_0$ and $\mathcal{M}_k$ is separated.
 By definition the maps defined in Lemma \ref{l:chart} are local homeomorphisms.
\end{proof}

\begin{proof}[Proof of Lemma \ref{l:holomorphic maps}]
The key remark for the proof is that the complex structure on the stratum $B$ of the base point of $\mathcal{H}_{V_i}$ has very special holomorphic charts. In fact, the map that sends, to every $b\in B$, the image of the branch point by the developing map of the associated holomorphic deformation, is a local holomorphic diffeomorphism (see \cite{CDF}).
Thus it suffices to check that for an arbitrary holomorphic deformation $Z\rightarrow (Y,y_0)$ lying in a fixed stratum the image of the critical point by the developing map defines a holomorphic map. This is obvious by the definition of a holomorphic deformation of branched projective structures.
\end{proof}

\end{document}